\numberwithin{equation}{section}
\newtheorem{Theorem}{Theorem}[section]      %definizione ambiente teorema
\newtheorem{Proposition}[Theorem]{Proposition}    %definizione ambiente proposizione
\newtheorem{Lemma}[Theorem]{Lemma}            %definizione ambiente lemma
\newtheorem{Remark}[Theorem]{Remark}           %definizione ambiente osservazione%%%%%
\def\R{\mathbb{R}}
\def\RN{\mathbb{R}^N}
\def\Sp{\mathbb{S}^{N-1}}
\def\S{\mathbb{S}^{N-1}}
\def\N{\mathbb{N}}
\def\e{\varepsilon}
\def\pp{\partial}
\def\weakto{\rightharpoonup}
\newcommand{\rmnote}[1]{}%{\mnote{#1}}
\title[Overdetermined elliptic problems in exterior domains]{Solutions to overdetermined elliptic problems in nontrivial exterior domains}
\keywords{Overdetermined semilinear elliptic problems, exterior domains, local bifurcation.}
\subjclass[2010]{35J61, 35N25.}
\begin{document}

\author{Antonio Ros}
\address{(A.~Ros)
Departamento de Geometr\'ia y Topolog\'ia,
Universidad de Granada,
Campus Fuentenueva,
18071 Granada,
Spain}
\email{aros@ugr.es}

\author{David Ruiz}
\address{(D.~Ruiz)
Departamento de An\'alisis matem\'atico, Universidad de Granada,
Campus Fuente-nueva, 18071 Granada, Spain} \email{daruiz@ugr.es}

\author{Pieralberto Sicbaldi}
\address{(P.~Sicbaldi)
Aix Marseille Universit\'e - CNRS, Centrale Marseille - I2M, Marseille, France}
\email{pieralberto.sicbaldi@univ-amu.fr}

\maketitle

\begin{abstract} In this paper we construct nontrivial exterior domains $\Omega \subset \R^N$, for all $N\geq 2$, such that the problem
$$\left\{\begin{array} {ll}
-\Delta u +u -u^p=0,\ u >0 & \mbox{in }\; \Omega, \\[1mm]
               \ u= 0 & \mbox{on }\; \partial \Omega, \\[1mm]
               \ \frac{\partial u}{\partial \nu} = \mbox{cte} & \mbox{on }\; \partial \Omega,
\end{array}\right.$$
admits a positive bounded solution. This result gives a negative answer to the Berestycki-Caffarelli-Nirenberg conjecture on overdetermined elliptic problems in dimension 2, the only dimension in which the conjecture was still open. For higher dimensions, different counterexamples have been found in the literature; however, our example is the first one in the form of an exterior domain.
\end{abstract}

\section{Introduction}

This paper is concerned with the existence of solutions of a semilinear overdetermined elliptic problem in the form
\begin{equation}\label{pr_bis}
\left\{\begin{array} {ll}
\Delta u + f(u) = 0 & \mbox{in }\; \Omega,\\
                        u > 0 & \mbox{in }\; \Omega,\\
               u= 0 & \mbox{on }\; \pp \Omega, \\
\frac{\partial u}{\partial \nu}=c \neq 0 &\mbox{on }\; \pp
\Omega.\,
\end{array}\right.
\end{equation}
Here $\Omega \subset \R^N$ is a regular domain, $f$ is a Lipschitz function and $\nu$ stands for the exterior normal vector to
$\partial \Omega$. Observe that the presence of two boundary conditions makes the problem overdetermined. Overdetermined boundary conditions arise naturally in free boundary problems, when the variational structure imposes
suitable conditions on the separation interface, see for example
\cite{Alt-Caf}.

\medskip

In 1971 J. Serrin proved that if \eqref{pr_bis} is solvable for a bounded domain $\Omega$, then $\Omega$ must be a ball (\cite{Serrin, Puc-Ser}). This is also true if we replace the Laplacian operator by another strongly elliptic operator and if the function $f$ depends also on the gradient of $u$. This result has many applications in Physics, and some of them are the following: 1) when a viscous incompressible fluid is moving in straight parallel streamlines through a pipe of given cross section, the tangential stress per unit area on the pipe wall is the same at all points if and only if the cross section is circular; 2) when a solid straight bar is subject to torsion, the magnitude of the resulting traction which occurs at the surface of the bar is independent of the position if and only if the bar has a circular cross section; 3) when a liquid is rising in a straight capillary tube, the liquid will rise to the same height at the tube wall if and only if the tube has circular section.
Serrin's proof is based on the {\it Alexandrov reflection principle}, introduced in 1956 by Alexandrov in \cite{Alex} to prove that the only compact, connected, embedded hypersurfaces in $\R^N$ whose mean curvature is constant are the spheres.
The reflection principle was used also in 1979 by Gidas, Ni and Nirenberg \cite{GNN} to derive radial symmetry results for positive solutions of semilinear elliptic equations. After that paper the reflection principle has been named the {\it moving plane method}.

\medskip

A natural dual version of the previous situation is to consider problem \eqref{pr_bis} in exterior domains, i.e., domains $\Omega$ given as the complement of a compact connected region $D \subset \R^N$. In Physics this situation corresponds to the case of very big domains (mathematically considered as unbounded) with a hole. We refer the reader to the survey \cite{sirakov_survey} for more specific applications in Physics of elliptic overdetermined problems in exterior domains.

\medskip

In the case of exterior domains, the problem that has been considered is the following:
\begin{equation}\label{pr_ext}
\left\{\begin{array} {ll}
\Delta v + g(v) = 0 & \mbox{in }\; \Omega,\\
               v= a >0 & \mbox{on }\; \pp \Omega, \\
\frac{\partial v}{\partial \vec{\nu}}= c&\mbox{on }\; \pp
\Omega,\\
0 \leq v < a & \mbox{in }\; \Omega.\\
\end{array}\right.
\end{equation}
With the change of variables $u:=a-v$ we have immediately a problem in the form \eqref{pr_bis} with the extra assumptions $u \leq a$. In this framework, the main research line has aimed to prove the counterpart of the Serrin's symmetry result, that is, to prove that $\Omega$ is the complement of a ball. For example under the assumptions that $g(t) \geq 0$ and that $t^{-\frac{n+2}{n-2}}g(t)$ is nonincreasing, Aftalion and Busca proved in \cite{aft-bus} that if problem \eqref{pr_ext} has a solution then $\Omega$ is the complement of a ball. In \cite{reichel} Reichel proved the same symmetry result but under different assumptions: he assumes that $g(t)$ is decreasing for small positive $t$ and that $u \to 0$ at infinity.
This last result in \cite{reichel} is still true if we replace the Laplacian operator $\Delta$ by a regular strongly elliptic operator, as shown by Sirakov in \cite{sir}. In the proofs the main tool used is the moving plane method from infinity, sometimes combined with the moving spheres method. As a consequence, their proofs show not only symmetry, but also monotonicity along the radius. To fix ideas, if $\Omega$ is a exterior domain and $f(u)=u-u^3$ (the so-called Allen-Cahn nonlinearity), one infers from \cite{reichel} that \eqref{pr_bis} is solvable only if $\Omega$ is the complement of a ball.
\medskip

Our first observation is that there are radially symmetric solutions of problem \eqref{pr_bis} which are not monotone along the radius. For instance, there exists a non-monotone radial solution to the problem:
\begin{equation}\label{NLS}
\left\{\begin{array} {ll}
\Delta u + u^p-u = 0 & \mbox{in }\; B_R^c,\\
                        u > 0 & \mbox{in }\;  B_R^c,\\
               u= 0 & \mbox{on }\; \pp B_R,
\end{array}\right.
\end{equation}
for any $p>1$ and $R>0$, where $B_R$ is the ball of radius $R$ and $B_R^c$ is its complement (see for instance \cite{esteban}). This equation receives the name of Nonlinear Schr\"{o}dinger Equation and has been widely studied in the literature. Its solution increases in the radius up to a certain maximum, and then it decreases and converges to $0$ at infinity.
Therefore, the usage of the moving plane method from infinity is intrinsically restricted to some kind of nonlinearities and/or solutions $u$. The goal of this paper is to prove that \eqref{pr_bis} is solvable for some exterior domain different from the complement of a ball. For that, we use a local bifurcation argument from the solutions of \eqref{NLS}.

\medskip

Before going further presenting our results, let us review the literature on overdetermined semilinear elliptic problems.
In \cite{BCN} Berestycki, Caffarelli and Nirenberg consider free boundary problems where the variational structure imposes overdetermined conditions on the boundary. The study of the regularity of the solutions by a blow-up technique led them to study problem \eqref{pr_bis} in epigraphs. Under some hypothesis on the nonlinearity $f$ and on the behavior of the epigraph at infinity, they proved that the epigraph must be a half-space (these results were later extended by Farina and Valdinoci \cite{FV}). Motivated by this, and by the aforementioned results on exterior domains by Aftalion, Busca and Reichel, they proposed in \cite{BCN} the following conjecture:

\medskip

{\bf BCN Conjecture}: If $\mathbb{R}^N \backslash
\overline{\Omega}$ is connected, then the existence of a bounded
solution to problem \eqref{pr_bis} implies that $\Omega$ is either
a ball, a half-space, a generalized cylinder $B^k \times
\mathbb{R}^{N-k}$ ($B^k$ is a ball in $\mathbb{R}^k$), or the
complement of one of them.

\medskip

This conjecture has been answered negatively for $N\geq 3$ in \cite{Sicbaldi}, where the third author finds a periodic perturbation of
the straight cylinder $B^{N-1} \times \mathbb{R}$ that supports a
periodic solution to problem (\ref{pr_bis}) with $f(t) = \lambda\,
t$.

\medskip

In the last years, a parallelism between overdetermined
elliptic problems and constant mean curvature surfaces, in the spirit of the correspondence of Alexandrof's and Serrin's results, has been observed. Indeed, the counterexample to the BCN Conjecture built in \cite{Sicbaldi} belongs
to a smooth one-parameter family that can be seen as a counterpart
of the family of {\it Delaunay surfaces}, see \cite{Sch-Sic}. Such domains exist also in other homogeneous manifolds, as $\mathbb{S}^n \times \R$ or $\mathbb{H}^n \times \R$, as shown in \cite{Mor-Sic} as the counterpart of other well known families of constant mean curvature surfaces.
In \cite{HHP} H\'elein, Hauswirth and Pacard establish a kind of Weierstrass representation for overdetermined elliptic problems in dimension 2 with $f \equiv 0$ in analogy with minimal surfaces. Moreover, Traizet finds a one-to-one correspondence between solutions of problem \eqref{pr_bis} in dimension $2$ with $f \equiv 0$ and a special class of minimal surfaces (\cite{traizet}). Finally,
in \cite{DPW} Del Pino, Pacard and Wei consider problem
\eqref{pr_bis} for functions $f$ of Allen-Cahn type and they build
new solutions in domains $\Omega$  with boundary close to a dilated embedded minimal surface in $\R^3$ with finite total curvature and nondegenerate, or to a dilated Delaunay surface.

\medskip

If $\Omega$ is an epigraph, the problem is also related to the {\it De Giorgi's conjecture} (1978), that is still open in its full generality. This conjecture states that the entire solutions of the Allen-Cahn equation $\Delta u + u - u^3 =0$ monotone in one direction must have level sets which are
parallel hyperplanes if $N \leq 8$. The relationship between the De Giorgi's conjecture and overdetermined problems is not surprising if we recall that this conjecture is the counterpart of the {\it Bernstein's conjecture} on minimal surfaces (1914), that stated that all entire minimal graphs in $\R^N$ should be hyperplanes, and which has been disproved by E. Bombieri, E. De Giorgi and E. Giusti for $N\geq9$ (\cite{BDG}). Starting from the Bombieri-De Giorgi-Giusti entire minimal graph, Del Pino, Kowalczyk and Wei build entire nontrivial monotone solutions to the Allen-Cahn equation if $N\geq 9$. In this spirit, Del Pino, Pacard and Wei has recently built nontrivial solutions for \eqref{pr_bis} for $f$ of Allen-Cahn type in nontrivial epigraphs if $N \geq 9$, see \cite{DPW}. In \cite{WW} Wang and Wei prove that this type of solutions do not exist if $N \leq 8$, a result that can be put in analogy with that of Savin for the De Giorgi conjecture (\cite{savin}). Finally, the notion of stability plays an important role in the De Giorgi conjecture, and also in overdetermined problems, see \cite{wang}.

\medskip

Coming back to the BCN Conjecture, we point out that all counterexamples mentioned above require $N \geq 3$, and we underline that all the examples of domains solving an overdetermined elliptic problem are linked to minimal or constant mean curvature surfaces.

\medskip

In this paper we give a counterexample in the form of a exterior domain for any dimension $N \geq 2$. This gives a definitive negative answer to the conjecture. Partial positive answers to the BCN conjecture in dimension 2 have been given in several works, see \cite{FV, HHP, nosotros, Ros-Sic, traizet, WW}. In \cite{nosotros} the authors show that the conjecture holds in dimension 2 under the hypothesis that $\partial \Omega$ is unbounded. The counterexample we give in this paper shows that such hypothesis is actually sharp.

\medskip

Finally, this is the first example of a domain solving an overdetermined elliptic problem that has no clear counterpart in the theory of minimal or constant mean curvature surfaces.

\medskip A first statement of our result (see Section 2 for a more detailed statement) is the following:

\begin{Theorem} \label{teo}
Let $N \in \N$, $N\geq 2$, $1<p< \frac{N+2}{N-2}$ ($p>1$ if $N=2$). There exist smooth exterior domains $\Omega$ different from the complement of a ball such that the overdetermined problem
\begin{equation} \label{ovdet}
\left\{\begin{array} {ll}
-\Delta u +u -u^p=0,\ u >0 & \mbox{in }\; \Omega, \\
               \ u= 0 & \mbox{on }\; \partial \Omega, \\
               \ \frac{\partial u}{\partial \nu} = \mbox{cte} & \mbox{on }\; \partial \Omega,
\end{array}\right.
\end{equation}
admits a bounded solution.
\end{Theorem}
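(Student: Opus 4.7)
The plan is to bifurcate from the one-parameter family of trivial solutions $(u_R, B_R^c)$, where $u_R$ is the positive radial solution of \eqref{NLS}. Since $u_R$ is radial, its normal derivative on $\partial B_R$ is automatically constant, so each $(u_R, B_R^c)$ already solves \eqref{ovdet}, and every bifurcating branch made of non-radial solutions will produce the required counterexamples. I would parametrize small perturbations of $B_R$ by functions $\phi \in C^{2,\alpha}(\S)$ via
\[
\Omega_\phi = \bigl\{x \in \R^N : |x| > R(1 + \phi(x/|x|))\bigr\},
\]
and, in order to make the spherical harmonic eigenspaces one-dimensional and avoid dealing with high-dimensional kernels, restrict from the start to $\phi$ invariant under some fixed copy of $O(N{-}1) \subset O(N)$.

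The first technical step is to solve, for each small $\phi$, the Dirichlet problem $-\Delta u + u - u^p = 0$ in $\Omega_\phi$ with $u = 0$ on $\partial\Omega_\phi$ and $u \to 0$ at infinity, obtaining a family $u_\phi$ close to $u_R$. Pulling back to $B_R^c$ through a diffeomorphism that sends $\partial B_R$ to $\partial \Omega_\phi$, this follows from the implicit function theorem in suitable exponentially weighted H\"older spaces that encode the decay at infinity, provided the linearized operator $-\Delta + I - p u_R^{p-1}$ with zero Dirichlet data is an isomorphism between such spaces. The latter nondegeneracy can be verified mode by mode using the ODE structure plus the fact that $p u_R^{p-1}$ decays exponentially at infinity. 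Having produced $u_\phi$ depending smoothly on $\phi$, define the bifurcation functional
\[
F(\phi, R) = \frac{\partial u_\phi}{\partial \nu_\phi}\circ \pi_\phi - \overline{\frac{\partial u_\phi}{\partial \nu_\phi}},
\]
taking values in the subspace of $C^{1,\alpha}(\S)$ of mean-zero functions, where $\pi_\phi:\S\to\partial\Omega_\phi$ is the natural radial diffeomorphism and the overline denotes the spherical average. Then $F(\phi,R)=0$ is exactly the overdetermined condition in \eqref{ovdet}.

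By $O(N)$-equivariance, the linearization $H(R) := D_\phi F(0, R)$ commutes with rotations and is therefore diagonal in the spherical harmonic decomposition; on the degree-$j$ subspace it acts as multiplication by a scalar $\mu_j(R)$, computable from the unique decaying radial solution $w_j$ of
\[
-w_j'' - \tfrac{N-1}{r}w_j' + \tfrac{j(j+N-2)}{r^2}w_j + w_j - p u_R^{p-1} w_j = 0 \text{ on } (R, \infty), \qquad w_j(R)=1,
\]
essentially as $w_j'(R)$ after an explicit correction involving $u_R'(R)$ and $u_R''(R)$. The modes $j=0,1$ correspond to the trivial invariances of the problem (rescaling in $R$ and translation of $\partial B_R$), so the genuine bifurcation analysis begins at $j=2$. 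The heart of the proof, and the main obstacle I foresee, is to exhibit a mode $j^* \geq 2$ and a radius $R^*$ with $\mu_{j^*}(R^*) = 0$ and $\partial_R \mu_{j^*}(R^*) \neq 0$. I would attack this by studying the asymptotic behavior of $\mu_j(R)$ as $R \to 0$ and as $R \to \infty$, using the precise non-monotone structure of the radial profile $u_R$ (in particular its maximum shell) to produce a sign change; the intermediate value theorem then yields a zero of $\mu_{j^*}$, and generic transversality takes care of the derivative condition. Once such a transversal zero is secured, the Crandall--Rabinowitz theorem supplies a smooth curve of nontrivial zeros of $F$ emanating from $(0, R^*)$, and the associated domains $\Omega_\phi$ give the non-spherical exterior solutions of \eqref{ovdet} claimed by the theorem.
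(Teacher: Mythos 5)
Your overall strategy — bifurcating from the radial solutions of \eqref{NLS}, pulling back to a fixed domain, and analyzing a Dirichlet-to-Neumann type operator on the sphere — is the same as the paper's, but there are several places where the proposal glosses over obstacles that turn out to be the main technical content of the paper, and at least one where the argument as stated would fail.

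First, you assume that the linearized Dirichlet operator $-\Delta + I - p u_R^{p-1}$ (with zero Dirichlet data on $\partial B_R$) is nondegenerate, so that the implicit function theorem yields $u_\phi$. This is false for general $R$, even under symmetry restrictions: the paper proves (Proposition \ref{G-deg}) that as $R \to \infty$ the Morse index of $u_R$ in the $G$-symmetric class becomes arbitrarily large, so the linearization certainly degenerates for some $R$. The paper has to establish a spectral gap — nondegeneracy holds for $R \in (0, R_0)$ — and this is where the subcritical restriction $p < \frac{N+2}{N-2}$ enters, via a blow-up argument as $R \to 0$ that compares with the entire ground state of $-\Delta U + U = U^p$ in $\R^N$ (Proposition \ref{key} and its proof in Section \ref{ultima}). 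Verifying nondegeneracy ``mode by mode using the ODE structure'' is not enough; you need uniform control in the mode index, and that is precisely what fails for large $R$.

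Second, restricting to functions invariant under a copy of $O(N-1)$ does not kill the degree-one mode: the axially symmetric harmonic $P_1(\cos\theta)$ is $O(N-1)$-invariant, so the translation mode is still present. You acknowledge that $j=1$ is a trivial invariance and say the ``genuine analysis begins at $j=2$,'' but unless the kernel is actually removed (e.g., by adding a reflection so the group is $O(1)\times O(N-1)$, or more generally any $G$ with $i_1 \geq 2$ as in condition \ref{G}), the operator $H(R)$ has a one-dimensional kernel for every $R$, the implicit function theorem fails, and Crandall--Rabinowitz does not apply. This is a fixable oversight, but it is an oversight.

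Third, the heart of your argument — ``exhibit $j^*, R^*$ with $\mu_{j^*}(R^*) = 0$ and $\partial_R \mu_{j^*}(R^*) \neq 0$'' and then ``generic transversality takes care of the derivative condition'' — is exactly the step you cannot leave to genericity. The paper sidesteps transversality entirely by using a Krasnoselskii-type bifurcation theorem (Theorem \ref{Kr}) based on a change in the Leray--Schauder index, which needs only that the number of negative eigenvalues changes by an odd amount across the interval, not that any individual eigenvalue crosses zero transversally. This is why condition \ref{G} also asks that the multiplicity $m_1$ of the first nontrivial $G$-symmetric spherical harmonic be odd, and why the paper cannot pinpoint the bifurcation radius exactly. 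If you insist on Crandall--Rabinowitz you must actually prove simplicity of the eigenvalue and the transversality $\partial_R \mu_{j^*}(R^*) \neq 0$, neither of which is supplied. Finally, the sign change of $\mu_{j^*}$ needs a proof: the paper obtains negativity for large $R$ by an explicit test function computation (Proposition \ref{G-deg}) and positivity for small $R$ by the blow-up analysis already mentioned, and then a separate argument (Lemma \ref{maj}, using a unique continuation principle) to show the critical radius for the Steklov problem is strictly larger than the critical radius for the Dirichlet problem — so that the Dirichlet-to-Neumann operator is still well defined at the bifurcation value. Your proposal does not address this last compatibility issue at all, and it is not automatic.
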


\medskip

Observe that, for any $R>0$, the solutions to \eqref{NLS} form a trivial family of solutions of \eqref{ovdet}. In this paper we use a local bifurcation argument to show that, from this family of trivial solutions, there are nontrivial solutions in nontrivial domains bifurcating at some values of the radius. The proof uses a general bifurcation result in the spirit of Krasnoselskii.
In order to do that, the nondegeneracy of the Dirichlet problem is essential, but in general this is false at least for some radii $R$. Under some symmetry assumptions, we find a spectral gap for the Dirichlet problem associated to \eqref{NLS}, that is, we show that it is nondegenerate for $R \in (0, R_0)$, for some $R_0>0$. Another important issue of the proof is to show that bifurcation occurs exactly in that interval. This is made by studying the behavior of the first Steklov eigenvalue of the linearized operator.

\medskip

The paper is organized as follows. In Section 2 we present some notations and preliminaries, we state our precise result, and we show the existence of the spectral gap for the Dirichlet problem; some proofs of the results of this section are postponed to the last section. In Section 3 we define the operator that appears naturally in our problems, and we compute its linearization. Section 4 is devoted to the study of the linearized operator and its spectrum. Finally, in Section 5 we use a local bifurcation result to conclude the proof.

\medskip

{\bf Acknowledgments. } A. Ros has been partially supported by Mineco-Feder Grant MTM2014-52368-P. D.
Ruiz has been supported by the Mineco-Feder Grant MTM2015-68210-P and by J.
Andalucia (FQM 116). P. Sicbaldi was partially supported ANR-11-IS01-0002 Grant.

\section{Preliminaries}

Let us first set some notations. Given a symmetry group $G$ acting on $\R^N$, we say that $\Omega \subset \R^N$ is $G$-symmetric if it is invariant under the action of the group $G$. In such case, we can define the Sobolev spaces of $G$-symmetric functions as follows:
$$ H^1_G(\Omega)= \{u \in H^1(\Omega):\ u= u \circ g \ \forall g \in G \}, $$
$$ H^1_{0,G}(\Omega)= \{u \in H_0^1(\Omega):\ u= u \circ g \ \forall g \in G \}, $$
and by $H^{-1}_{G}(\Omega)$ the dual space of $H^1_{0,G}(\Omega).$
We will use the same kind of notations for functions defined in $\partial \Omega$. In particular:
$$ H^{1/2}_G(\partial \Omega)= \{u \in H^{1/2}(\partial \Omega):\ u= u \circ g \ \forall g \in G \}. $$
We denote by $B_R \subset \R^N$ the ball of radius $R$ centered at $0$, and we may also write $\Sp$
instead of $\partial B_1$.
If $\Omega$ is radially symmetric, we shall denote the spaces of radially symmetric functions as:
$$ H^1_r(\Omega)= \{u \in H^1(\Omega):\ u(x)= u(|x|) \  a.e. \ x \in \Omega \}, $$
$$ H^1_{0,r}(\Omega)= \{u \in H_0^1(\Omega):\ u(x)= u(|x|) \  a.e. \ x \in \Omega \}. $$
For a function $u \in H^1(\Omega)$, we denote $\| u \| = \Big ( \| \nabla u \|_{L^2}^2 + \| u \|_{L^2}^2 \Big)^{1/2}$ its Sobolev norm. Other norms will be clear from the subscript.
In the case of the Holder regularity we can define the following spaces:
$$C^{k,\alpha}_G(\Omega)= \{u \in C^{k,\alpha}(\Omega):\ u= u \circ g \ \forall g \in G \}, $$
$$C^{k,\alpha}_G(\pp\Omega)= \{u \in C^{k,\alpha}(\pp\Omega):\ u= u \circ g \ \forall g \in G \}. $$

Moreover, we will denote by $C^{k,\alpha}_{G,m}(\Sp)$ the set of functions in $C^{k,\alpha}_G(\Sp)$ whose mean is $0$.
Given a positive function $w \in C^{2,\alpha}_G(\Sp)$ let us denote
\[
B_{w} : =  \left\{   x \in \mathbb R^{N} \,\,\ : \,\,\, 0 \leq |x|   < w\left( \frac{x}{|x|}\right)\right\} \, .
\]
and $B_w^c$ its complement in $\R^N$.
%Let us consider the associate Dirichlet problem. In order to study later the Neumann data of the solution, we need to ensure the existence of a unique solution to the Dirichlet problem outside all domains $B_v$ obtained as small perturbations of $B_1$.
%\medskip

\medskip We denote $\Delta_{\Sp}$ the Laplace-Beltrami operator in $\Sp$, and $\{\mu_{i}\}_{i\in\N}$ its eigenvalues, i.e. $\mu_i=i(i+N-2)$. From now on, we will fix a symmetry group $G$ with the following property:
\begin{enumerate}[label=(G), ref=(G)]
\item \label{G} $G$ leaves invariant the origin and, denoting by $\{\mu_{i_k}\}_{k\in\N}$ the eigenvalues of $\Delta_{\Sp}$ restricted to $G-$symmetric functions and $m_k$ their multiplicity, we require $i_1 \geq 2$ and $m_1$ odd.
\end{enumerate}

We are now able to state the main result of this paper, from which Theorem \ref{teo} follows immediately.

\begin{Theorem} \label{main} Let $N \in \N$, $N\geq 2$, $1<p< \frac{N+2}{N-2}$ ($p>1$ if $N=2$). Let $G$ be a group of symmetries of $\RN$ satisfying \ref{G}. Then there exist $R_*=R_*(i_1, p)>0$, a sequence of non-zero functions $v_n \in C^{2,\alpha}_{G,m}(\Sp)$ converging to $0$, and a sequence of positive real numbers $R_n$ converging to $R_*$ such that the overdetermined problem:
\[
\left\{\begin{array} {ll}
-\Delta u +u -u^p=0, & \mbox{in }\; B_{R_n(1+v_n)}^c \\[2mm]
               \ u= 0 & \mbox{on }\; \pp B_{R_n(1+v_n)} \\[2mm]
               \ \frac{\partial u}{\partial \nu} = \mbox{cte} & \mbox{on }\; \pp B_{R_n(1+v_n)}
\end{array}\right.
\]
admits a positive bounded solution $u \in C^{2,\alpha}_G\left(B_{R_n(1+v_n)}^c\right) \cap H^1_{0,G}\left(B_{R_n(1+v_n)}^c\right)$.
\end{Theorem}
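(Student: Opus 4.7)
The plan is to apply a local bifurcation argument along the trivial branch $\{(R, u_R) : R \in (0, R_0)\}$, where $u_R$ denotes the radial solution of \eqref{NLS} in $B_R^c$ and $R_0$ is the nondegeneracy threshold (under \ref{G}) established in Section 2. Since every $u_R$ has constant normal derivative on $\partial B_R$ by radial symmetry, the overdetermined condition is satisfied trivially for the family $(R, 0)$; one hopes to find nontrivial $v$ bifurcating off it.

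First, I reduce the overdetermined problem to a functional equation on $\Sp$. For each $R \in (0, R_0)$ and each small $v \in C^{2,\alpha}_{G,m}(\Sp)$, the nondegeneracy of $u_R$ combined with the implicit function theorem (after pulling back to the fixed domain $B_R^c$) yields a unique $G$-symmetric positive solution $u_{R,v}$ of the Dirichlet problem in $B^c_{R(1+v)}$ close to $u_R$, depending smoothly on $(R,v)$. Pulling $\partial_\nu u_{R,v}$ back to $\Sp$ via $\sigma \mapsto R(1+v(\sigma))\sigma$ and subtracting its mean defines a smooth map
\[
F : (0, R_0) \times U \to C^{1,\alpha}_{G,m}(\Sp), \qquad F(R, 0) \equiv 0,
\]
and nontrivial zeros of $F$ correspond to the desired domains.

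Second, I compute and diagonalize the linearization $L_R := D_v F(R, 0)$. Differentiating the Dirichlet condition $u_{R, tv}(R(1+tv)\sigma) = 0$ at $t = 0$ gives $\dot u|_{\partial B_R} = -R\, u_R'(R)\, v$, and a standard shape-derivative calculation expresses $L_R v$ as the boundary trace of $\partial_r \dot u$ plus multiplicative corrections involving $u_R''(R)$ and $u_R'(R)$. This identifies $L_R$ as a first-order self-adjoint elliptic operator on $\Sp$, essentially the Dirichlet-to-Neumann (Steklov) operator of $-\Delta + 1 - p\, u_R^{p-1}$ on $B_R^c$ with decay at infinity. Separation of variables along the $G$-admissible spherical harmonics $\{\mu_{i_k}\}_{k \in \N}$ reduces $L_R$ to a family of scalar eigenvalues $\sigma_{i_k}(R)$ of multiplicity $m_k$, each computable by solving a second-order ODE in the radial variable with square-integrable behavior at infinity.

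Third, I locate the bifurcation point. I would show that $R \mapsto \sigma_{i_1}(R)$ is continuous on $(0, R_0)$ and changes sign: as $R \to 0^+$, the angular contribution $\mu_{i_1}/R^2$ dominates and forces $\sigma_{i_1}(R) > 0$, whereas as $R \to R_0^-$ a comparison with the radial Jacobi field responsible for the degeneracy at $R_0$ forces $\sigma_{i_1}(R)$ to become negative. Continuity then yields $R_* \in (0, R_0)$ with $\sigma_{i_1}(R_*) = 0$, and transversality $\sigma_{i_1}'(R_*) \ne 0$ is checked by a direct ODE computation. By \ref{G}, the kernel of $L_{R_*}$ is the $m_1$-dimensional space of $G$-symmetric spherical harmonics of degree $i_1$, with $m_1$ odd. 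Krasnoselskii's local bifurcation theorem (odd-crossing version) then produces sequences $R_n \to R_*$ and $v_n \to 0$ with $v_n \not\equiv 0$ and $F(R_n, v_n) = 0$; positivity, $C^{2,\alpha}_G$ regularity, and boundedness of $u_{R_n, v_n}$ follow by construction, standard elliptic theory, and exponential decay at infinity for the NLS linearization. The main obstacle will be the spectral analysis underlying this step: because $R_0$ is itself characterized by the first vanishing of the radial Steklov eigenvalue, a delicate comparison is required to guarantee that the $i_1$-mode crosses zero strictly before the radial mode does. The angular condition $i_1 \ge 2$ in \ref{G} is precisely what keeps the $\mu_{i_1}/R^2$ term strictly larger than its radial counterpart, separating the two crossings and leaving room for bifurcation inside the nondegeneracy window.
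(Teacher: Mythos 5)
Your overall architecture is the same as the paper's: reduce the overdetermined problem to a functional equation $F(R,v)=0$ on $\Sp$ via the Dirichlet-to-Neumann operator, identify the linearization $H_\lambda$ as a Steklov-type operator, decompose along $G$-admissible spherical harmonics, and apply Krasnoselskii with the odd multiplicity $m_1$ coming from hypothesis \ref{G}. The implicit-function-theorem step, the shape-derivative formula for $H_\lambda$, and the kernel identification via the monotone decomposition $\tilde Q_\lambda = \sum_k \tilde Q_{\lambda,k}$ are all present and match the paper.

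However, there are three genuine gaps in the crucial spectral step. First, your justification for $\sigma_{i_1}(R)<0$ near the Dirichlet degeneracy threshold $R_0$ is wrong on its face: you attribute the degeneracy at $R_0$ to a \emph{radial} Jacobi field, but the radial linearized Dirichlet operator is nondegenerate for every $R$ (this is precisely part (c) of the paper's Proposition 2.4); the degeneracy at $R_0$ is carried by the mode-$i_1$ harmonics, which is why the group $G$ is needed at all. The paper's actual argument that the Steklov eigenvalue goes negative \emph{strictly before} the Dirichlet degeneracy (Lemma 4.4, the statement $\Lambda^*>\Lambda_0$) is not a comparison argument: it is a contradiction via unique continuation — a hypothetical Steklov-nonnegative minimizer at $\Lambda_0$ would satisfy both Dirichlet and Neumann conditions on $\partial B_1$, forcing it to vanish. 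Without this separation you cannot guarantee the Steklov crossing happens inside the window where the Dirichlet-to-Neumann map is even defined, and your proposal offers no substitute. Second, the positivity of $\sigma_{i_1}(R)$ for small $R$ is the technical heart of the paper (Proposition 4.3/2.7) and cannot be dismissed with ``the angular contribution $\mu_{i_1}/R^2$ dominates'': there is a competing negative boundary term $-\frac{N-1}{R}\int_{\partial B_R}\psi^2$ of the same order, and the paper needs both a sharp Poincaré-type trace inequality relying on $\mu_{i_1}\ge 2N$ (hence on $i_1\ge 2$) and a blow-up/compactness argument identifying the limit $R\to 0$ with the entire ground state $U$ and its linearization. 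Third, your transversality claim $\sigma_{i_1}'(R_*)\ne 0$ ``checked by a direct ODE computation'' is not verified and is not obviously true; the paper deliberately avoids it by using an endpoint-index version of Krasnoselskii (Theorem 5.2) that only needs the indices at $\Lambda_1$ and $\Lambda_2$ to differ by an odd number and an Id-plus-compact reformulation (via $G=F+\mathrm{Id}$ and Lemma 5.1), exactly because the bifurcation point cannot be pinned down or shown to be a transversal crossing.
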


\begin{Remark}\rm{
There are many examples of groups $G$ satisfying \ref{G}. For instance, if $1\leq m \leq N-1$, the group $G=O(m) \times O(N-m)$ satisfies that $i_1=2$ and $m_1=1$. Indeed, in this case, the corresponding eigenvalue is given as the restriction to $\S$ of the 2-homogeneous harmonic polynomial: $$p(x)= (N-m)(x_1^2 + \dots + x_m^2) - m (x_{m+1}^2 + \dots + x_N^2).$$
In dimension $2$, we can take as $G$ any dihedral group $\mathbb{D}_k$, $k\geq 3$. In this case, $i_1= k$ and $m_1=1$.
In dimension $3$ we can take $G$ as the group of isometries of the tetrahedron ($i_1=3$ and $m_1=1$), the octahedron ($i_1 = 4$ and $m_1=1$) or the icosahedron ($i_1 = 6$ and $m_1=1$), see \cite{laporte}.}

%In dimension $4$ there is a huge 120-vertex polytope, the polytope $\{3,3,5\}$ according to Schlafli notation. For the related group of symmetries, $\tilde{\mu_1} = \mu_{12}$ and $\tilde{m}_1=1$, see \cite{nelson}.
\end{Remark}

\begin{Remark}\rm{
One could ask whether two different groups $G_1$, $G_2$ give rise to different domains $\Omega$. The answer is (partially) affirmative. Indeed, define $G= \langle G_1 \,,\, G_2\rangle$, and denote:
\begin{enumerate}
\item $\{\mu_{i_k}\}$ the eigenvalues of $\Delta_{\Sp}$ restricted to $G_1$-symmetric functions,
\item $\{\mu_{j_k}\}$ the eigenvalues of $\Delta_{\Sp}$ restricted to $G_2$-symmetric functions,
\item $\{\mu_{l_k}\}$ the eigenvalues of $\Delta_{\Sp}$ restricted to $G$-symmetric functions.
\end{enumerate}
Clearly, $l_1 \geq \max \{ i_1, j_1\}$. If $l_1 > \min \{ i_1, j_1 \}$, then the two groups $G_1$ and $G_2$ give rise to different domains $\Omega$. In particular, this is true if $i_1 \neq j_1$. In fact the value of the bifurcation radius $R_*$ is different; this is due to the fact that the value $R_*$ is strictly increasing with respect to $i_1$, as can be see in the proof of Lemma \ref{caracola}.}
\end{Remark}

As commented in the introduction, we will prove Theorem \ref{main} by means of a bifurcation argument to show the existence of such domain $\Omega$ close to the exterior of a ball. For that, we shall need some facts of the Dirichlet problem; given any $p>1$, consider:
\begin{equation} \label{dirichlet}
\left\{\begin{array} {ll}
-\Delta u +u -u^p=0,\ u >0 & \mbox{in }\; B_R^c, \\
               \ u= 0 & \mbox{on }\; \pp B_R. \\
\end{array}\right.
\end{equation}
It will be convenient to make a change of scale and pass to the equivalent problem:
\begin{equation} \label{dirichlet2}
\left\{\begin{array} {ll}
- \lambda \Delta u +u - u^p=0,\ u >0   & \mbox{in }\; B_1^c, \\
               \ u= 0 & \mbox{on }\; \pp B_1, \\
\end{array}\right.
\end{equation}
where $\lambda = \frac{1}{R^2}$. In the proposition below we list some known properties of this problem.

\begin{Proposition} \label{list} There follows:
\begin{enumerate}
\item[a)] For any $\lambda>0$, there exists a radially symmetric $C^{\infty}$ solution of \eqref{dirichlet2}. This solution increases in the radius up to a certain maximum, and then it decreases and converges to $0$ at infinity.
\item[b)] The positive and radial solution to \eqref{dirichlet2} is unique: we denote it by $u_\lambda$.
\item[c)] Let us define the linearized operator $L_\lambda: H^1_{0,G}(B_1^c) \to H^{-1}_{G}(B_1^c)$,
\begin{equation}\label{linear-d}
L_\lambda(\phi) = - \lambda \Delta \phi + \phi  - p u_\lambda^{p-1} \phi \,,
\end{equation}
and consider the eigenvalue problem:
$$ L_\lambda(\phi) = \tau\, \phi.$$
In the space of radially symmetric functions $H^1_{0,r}(B_R^c)$ this problem has a unique negative eigenvalue and no zero eigenvalues. In other words, $u_\lambda$ is nondegenerate in $H^1_{0,r}(B_1^c)$ and has Morse index 1. We denote by $z_\lambda \in H^1_{0,r}(B_1^c)$ (normalized by $\| z_\lambda \|=1$) the positive eigenfunction with negative eigenvalue, i.e.
\begin{equation} \label{z}
\left\{\begin{array} {ll}
-\Delta z_\lambda + z_\lambda  - p u_\lambda^{p-1} z_\lambda = \tau_0 z_\lambda & \mbox{in }\; B_1^c, \\
         \      z_\lambda= 0 & \mbox{on }\; \pp B_1, \\
\end{array}\right.
\end{equation}
where $\tau_0= \tau_0(\lambda)<0$. Moreover $z_\lambda$ is a $C^{\infty}$ function.

\end{enumerate}
\end{Proposition}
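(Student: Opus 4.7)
The plan is to treat the three parts in turn: a variational existence plus ODE shape analysis for (a), a shooting-and-monotonicity argument à la Kwong / Felmer--Martel for (b), and a Sturm-type analysis of the radial linearization for (c). For (a), I would produce the positive radial solution of \eqref{dirichlet2} as a critical point of $I(u)=\tfrac{1}{2}\int_{B_1^c}(\lambda|\nabla u|^2+u^2)-\tfrac{1}{p+1}\int_{B_1^c}|u|^{p+1}$ restricted to $H^1_{0,r}(B_1^c)$. In the subcritical regime, Strauss' radial compactness in exterior domains yields a compact embedding $H^1_{0,r}(B_1^c)\hookrightarrow L^{p+1}(B_1^c)$, so a mountain-pass or Nehari-manifold minimisation produces a non-trivial critical point; positivity and $C^\infty$-regularity follow from the strong maximum principle and elliptic bootstrap. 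To read off the shape of $u_\lambda$, I would pass to the radial ODE $-\lambda u''-\tfrac{\lambda(N-1)}{r}u'=u^p-u$ with $u(1)=0$ and $u\to 0$ at infinity, and exploit the energy $E(r)=\tfrac{\lambda}{2}(u')^2+\tfrac{u^{p+1}}{p+1}-\tfrac{u^2}{2}$, which satisfies $E'(r)=-\tfrac{\lambda(N-1)}{r}(u')^2\leq 0$. At any critical point $r_*$ the ODE forces $u(r_*)\geq 1$ at local maxima and $u(r_*)\leq 1$ at local minima; combined with the strict monotonicity of $E$ and the shape of $u\mapsto\tfrac{u^{p+1}}{p+1}-\tfrac{u^2}{2}$, this rules out the existence of more than one critical point and gives the announced single-bump profile.

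For (b), the most delicate step, I would parametrise the one-parameter family of radial solutions of the Cauchy problem at $r=1$ by $\alpha:=u'(1)>0$ and classify the parameters into those producing a zero at finite range, those producing blow-up, and those producing a bounded positive bound state in $H^1$. A monotonicity argument with respect to $\alpha$ on the energy, or on an Emden--Fowler-type quantity, in the spirit of Kwong or Felmer--Martel, shows that the set of bound-state parameters is a singleton, giving uniqueness of $u_\lambda$. This will be the main obstacle of the whole proposition and, presumably, the reason the authors defer the detailed proof to a later section.

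For (c), I would first show $\tau_0<0$ by testing $L_\lambda$ against $u_\lambda$ itself: using $-\lambda\Delta u_\lambda+u_\lambda=u_\lambda^p$ one finds $\langle L_\lambda u_\lambda,u_\lambda\rangle=(1-p)\int_{B_1^c}u_\lambda^{p+1}<0$, so the first radial eigenvalue is strictly negative; its eigenfunction $z_\lambda$ is simple and can be taken positive by a standard $|z_\lambda|$-minimisation plus the strong maximum principle. To exclude further non-positive eigenvalues (in particular a zero one) in $H^1_{0,r}(B_1^c)$, I would apply Sturm oscillation to the radial ODE satisfied by eigenfunctions and compare with $\psi:=\partial_r u_\lambda$, which satisfies $L_\lambda\psi=-\tfrac{\lambda(N-1)}{r^2}\psi$ and vanishes exactly once, at the unique maximum of $u_\lambda$. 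The comparison forces every radial eigenfunction of $L_\lambda$ with eigenvalue $\leq 0$ to have at most one interior zero, preventing both a zero eigenvalue and a second negative eigenvalue in the radial class, and hence yielding both nondegeneracy and Morse index exactly one.
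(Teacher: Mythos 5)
The paper does not actually prove this proposition: it is established purely by citation, pointing to Esteban--Lions \cite{esteban} for part (a) and to Felmer--Mart\'inez--Tanaka \cite{f-m-tanaka} and Tang \cite{tang} for parts (b) and (c). Your proposal, by contrast, is a self-contained sketch, but it is in the same spirit as those references: variational existence (Nehari/mountain-pass with Strauss-type radial compactness) plus an energy--shape analysis of the radial ODE for (a), a shooting/monotonicity argument for uniqueness in (b), and an oscillation/Wronskian analysis built on $\partial_r u_\lambda$ and the identity $L_\lambda(\partial_r u_\lambda)=-\tfrac{\lambda(N-1)}{r^2}\partial_r u_\lambda$ for (c). So the approach is the right one, and the paper simply chooses not to reproduce it. Two soft spots worth noting. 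In (a), knowing only that local maxima occur where $u\ge1$, local minima where $u\le1$, and that $E$ is decreasing is not by itself quite enough to rule out several bumps; you also need to use that $u\to0$ at infinity to show $u<1$ eventually (so no further maxima can occur past that point), and then argue backwards. In (c), ``radial eigenfunctions with $\tau\le0$ have at most one interior zero'' does not by itself preclude $\tau_2\le0$: the second radial eigenfunction genuinely does have exactly one interior zero, so the nodal bound is compatible with a nonpositive second eigenvalue. The cited references instead run a genuinely two-sided Wronskian comparison against $\partial_r u_\lambda$, exploiting the sign change of the source term $-\tfrac{\lambda(N-1)}{r^2}\partial_r u_\lambda$ across the unique critical radius of $u_\lambda$, to force a contradiction. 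As for (b), you correctly identify it as the hard part; it is in fact the main content of \cite{f-m-tanaka,tang}, which is precisely why the authors delegate the whole proposition to the literature rather than sketch it.
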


\begin{proof}
Statement a) is quite well known and has been proved in \cite{esteban}, for instance. The results b) and c) are more recent and have been obtained in \cite{f-m-tanaka, tang}.
\end{proof}

Let us define the bilinear operator associated to \eqref{linear-d}: $Q_\lambda: H^1_{0,G}(B_1^c) \times H^1_{0,G}(B_1^c) \to \R$,
\begin{equation} \label{bilinear0}  Q_\lambda(\psi_1, \psi_2)=  \int_{B_1^c} \lambda \nabla \psi_1 \cdot \nabla \psi_2 + \psi_1 \, \psi_2 - p u_\lambda^{p-1} \psi_1 \, \psi_2\,. \end{equation}
By Proposition \ref{list}, c), $Q_\lambda$ is positive definite for $\psi \in H_{0,r}^1(B_1^c)$ with $\int_{B_1^c} \psi \, z_{\lambda} =0$. In next lemma we show that this property may fail if we do not impose radial symmetry. This might be known in the literature, but we have not been able to find a specific reference.
\begin{Proposition} \label{G-deg} Let $G$ be a symmetry group satisfying hypothesis \ref{G}. Then there exists $\e > 0$ such that for any $\lambda \in (0, \e)$, there exists $\psi \in H_{0,G}^1(B_1^c)$ such that
\begin{enumerate}
\item $\int_{B_1^c} \psi z_{\lambda}=0$.
\item $Q_\lambda(\psi, \psi)< 0$.
\end{enumerate} \end{Proposition}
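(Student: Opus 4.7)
The strategy is to test the quadratic form $Q_\lambda$ with a product function of the form $\psi(x) = z_\lambda(|x|)\, Y(x/|x|)$, where $Y$ is a nonzero $G$-symmetric eigenfunction of $-\Delta_{\Sp}$ associated to the eigenvalue $\mu_{i_1}$. Such a $Y$ exists by hypothesis \ref{G} (since $m_1 \geq 1$), and as $\mu_{i_1}>0$ it is $L^2(\Sp)$-orthogonal to constants, i.e.\ $\int_{\Sp} Y\,d\sigma = 0$. Because $G$ acts by isometries of $\RN$ fixing the origin, $\psi$ is automatically $G$-symmetric, and $z_\lambda(1)=0$ forces $\psi \in H^1_{0,G}(B_1^c)$. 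The orthogonality condition $\int_{B_1^c}\psi\,z_\lambda=0$ splits by Fubini as $(\int_1^\infty z_\lambda^2 r^{N-1}\,dr)\cdot(\int_{\Sp} Y\,d\sigma) = 0$ and is automatic.

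Decomposing $|\nabla \psi|^2 = (z_\lambda')^2 Y^2 + z_\lambda^2 |\nabla_{\Sp} Y|^2/r^2$ and using $\int_{\Sp}|\nabla_{\Sp} Y|^2\,d\sigma = \mu_{i_1} \int_{\Sp} Y^2\,d\sigma$, the quadratic form separates as
\begin{equation*}
Q_\lambda(\psi,\psi) = \|Y\|_{L^2(\Sp)}^2 \left\{ \int_1^\infty \bigl[\lambda (z_\lambda')^2 + z_\lambda^2 - p u_\lambda^{p-1} z_\lambda^2\bigr] r^{N-1}\,dr + \lambda\mu_{i_1} \int_1^\infty z_\lambda^2\, r^{N-3}\,dr \right\}.
\end{equation*}
Multiplying the eigenvalue equation \eqref{z} by $z_\lambda$ and integrating by parts, the first integral equals $\tau_0(\lambda) \int_1^\infty z_\lambda^2 r^{N-1}\,dr$. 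Since $r \geq 1$ implies $r^{N-3} \leq r^{N-1}$, this yields
\begin{equation*}
Q_\lambda(\psi,\psi) \;\leq\; \|Y\|_{L^2(\Sp)}^2 \,\bigl(\tau_0(\lambda) + \lambda \mu_{i_1}\bigr) \int_1^\infty z_\lambda^2\, r^{N-1}\,dr.
\end{equation*}

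It therefore suffices to prove that $\limsup_{\lambda\to 0^+}\tau_0(\lambda) < 0$: then $\tau_0(\lambda)+\lambda\mu_{i_1}<0$ for all $\lambda$ small, and the conclusion follows. For this, we use $u_\lambda$ itself as a radial test function in the Rayleigh characterization of $\tau_0(\lambda)$ together with the identity $\int_{B_1^c}(\lambda|\nabla u_\lambda|^2 + u_\lambda^2) = \int_{B_1^c} u_\lambda^{p+1}$, obtained by testing the equation for $u_\lambda$ against itself. This gives
\begin{equation*}
\tau_0(\lambda) \;\leq\; \frac{(1-p)\int_{B_1^c} u_\lambda^{p+1}}{\int_{B_1^c} u_\lambda^2}\,,
\end{equation*}
which is negative since $p>1$. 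The boundary-layer rescaling $s=(r-1)/\sqrt\lambda$, $W_\lambda(s) = u_\lambda(1+\sqrt\lambda s)$, converts the radial equation for $u_\lambda$ into a regular perturbation of the half-line problem $-W'' + W - W^p = 0$ on $(0,\infty)$ with $W(0)=W(\infty)=0$, whose unique positive solution $U$ controls the limit: both $\int_{B_1^c} u_\lambda^{p+1}$ and $\int_{B_1^c} u_\lambda^2$ scale as $\sqrt\lambda$ with ratio converging to $\int_0^\infty U^{p+1}/\int_0^\infty U^2 > 0$. Hence $\tau_0(\lambda)$ stays bounded away from zero on some interval $(0,\e)$, completing the proof.

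The main obstacle is precisely this last step: ruling out a collapse $\tau_0(\lambda) \to 0$ as $\lambda \to 0^+$. Everything else reduces to elementary separation of variables on the sphere and the spectral information already packaged in Proposition \ref{list}. The singular-perturbation asymptotics for the exterior NLS profile $u_\lambda$ are standard, and one may quote them from the literature to keep the argument short.
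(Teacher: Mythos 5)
Your approach is genuinely different from the paper's: you test $Q_\lambda$ with $\psi = z_\lambda(r)\,Y(\theta)$ (the radial eigenfunction times a spherical harmonic), whereas the paper tests with $\psi = u_\lambda(r)\,\phi(\theta)$ (the solution itself times a spherical harmonic). Both choices split cleanly by separation of variables and kill the orthogonality constraint automatically, since the angular factor has zero mean. Where the paper's choice pays off is in the final estimate: the energy identity $\int_{B_1^c}\bigl(\lambda|\nabla u_\lambda|^2 + u_\lambda^2\bigr) = \int_{B_1^c} u_\lambda^{p+1}$ (multiply \eqref{dirichlet2} by $u_\lambda$ and integrate) immediately collapses $Q_\lambda(u_\lambda\phi,u_\lambda\phi)$ to something $\leq (1-p+\lambda\mu_{i_1})\int u_\lambda^2 r^{N-1}\,dr$, giving $\varepsilon = (p-1)/\mu_{i_1}$ with no further work. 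Your choice instead reduces the question to a bound on the radial Morse eigenvalue $\tau_0(\lambda)$, which is an extra layer of indirection.

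The extra layer is where a genuine gap appears. Your final step asserts that the boundary-layer rescaling produces, in the limit $\lambda\to 0^+$, the half-line problem $-W''+W-W^p=0$ on $(0,\infty)$ with $W(0)=W(\infty)=0$, "whose unique positive solution $U$ controls the limit." \emph{No such solution exists.} The conserved energy $\tfrac12(W')^2 - \tfrac12W^2 + \tfrac1{p+1}W^{p+1} = E$ forces $E=\tfrac12 W'(0)^2 \geq 0$, and $E=0$ gives $W\equiv 0$ while $E>0$ gives a periodic orbit that crosses zero infinitely often; a decaying, positive, Dirichlet half-line profile is impossible. So the asymptotic picture you invoke is not available in the literature, and the "main obstacle" as you frame it cannot be resolved the way you propose.

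Fortunately the obstacle is illusory. From the same energy identity above one gets $\int u_\lambda^{p+1} \geq \int u_\lambda^2$, and therefore
\[
\tau_0(\lambda) \;\leq\; \frac{(1-p)\int_{B_1^c} u_\lambda^{p+1}}{\int_{B_1^c} u_\lambda^2} \;\leq\; 1-p \;<\; 0
\]
for \emph{every} $\lambda>0$, with no singular perturbation analysis whatsoever. Plugging this into your own inequality $Q_\lambda(\psi,\psi)\leq \|Y\|_{L^2}^2\bigl(\tau_0(\lambda)+\lambda\mu_{i_1}\bigr)\int z_\lambda^2 r^{N-1}\,dr$ yields $Q_\lambda(\psi,\psi)<0$ exactly when $\lambda < (p-1)/\mu_{i_1}$, recovering the same threshold as the paper. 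So your proof works after this repair, but you traveled through $z_\lambda$ and $\tau_0$ only to end up needing the same identity the paper uses directly on $u_\lambda$ — the paper's test function is the shortcut.
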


%\begin{Lemma} \label{lemma1} Fixed $\lambda$, there exists $m_0 \in \N$ such that for any $m \in \N$, $m \geq m_0$, $Q_0(\psi, \psi)> 0$ for any $\psi \in H_{0, mk}^1(B_1^c) \setminus \{0\}$, with $\int_{B_1^c} \psi z_{\lambda}=0$. \end{Lemma}

\begin{Remark}\rm{
The proof of Proposition \ref{G-deg} can be adapted to show that the Morse index of $u_\lambda$ in $H_{0}^1(B_1^c)$ diverges as $\lambda \to 0$. This is in contrast with what happens in the radial case. Hence, one expects the existence of infinitely many branches of nonradial solutions to the problem \eqref{dirichlet2} bifurcating from $u_\lambda$. As far as we know, this result has not been explicitly written in the literature. In any case, the existence of this kind of solutions is outside the scope of this paper.}
\end{Remark}

\begin{proof}[Proof of Proposition \ref{G-deg}]

Let $\mu_{i_1}$ be the first eigenvalue of the operator $\Delta_{\Sp}$ restricted to $G$-symmetric functions, and $\phi$ one of the corresponding normalized eigenfunctions.
Let us define the function $\psi$ in polar coordinates: $\psi (r, \theta)= u_\lambda(r) \phi(\theta)$, with $\theta \in \Sp$. Since $z_\lambda$ is radially symmetric, $\int_{B_1^c} \psi z_{\lambda}=0$. Observe now that
$$ | \nabla \psi |^2 = u_\lambda'(r)^2\, \phi^2(\theta) + \frac{1}{r^2} \, u_\lambda(r)^2\,   |\nabla_{\theta} \phi(\theta)|^2.$$
Therefore,
\begin{align*} Q_\lambda(\psi, \psi) =  \int_1^{+\infty} r^{N-1} \left(\lambda u_\lambda'(r)^2 + u_{\lambda}(r)^2 - p |u_{\lambda}(r)|^{p+1} \right) \int_{\Sp} \phi^2(\theta) \, d \theta\, dr   \\ + \lambda \int_1^{+\infty} r^{N-1} \, \frac{1}{r^2} \,u_\lambda(r)^2 \int_{\Sp}|\nabla_{\theta} \phi (\theta)|^2 \, d \theta\, dr \\ =  \int_1^{+\infty}  \left(\lambda u_\lambda'(r)^2 + u_{\lambda}(r)^2 - p |u_{\lambda}(r)|^{p+1}  + \lambda \frac{\mu_{i_1}}{r^2} u_\lambda(r)^2 \right)r^{N-1} \, dr\,. \end{align*}
If we multiply the equation in \eqref{dirichlet2} (with $u=u_\lambda$) by $u_\lambda$ and integrate, we obtain:
$$ \int_1^{+\infty}  \left(\lambda u_\lambda'(r)^2 + u_{\lambda}(r)^2 \right) r^{N-1}\, dr = \int_1^{+\infty}   |u_{\lambda}(r)|^{p+1}\,  r^{N-1} \, dr.$$
Plugging this identity in the above expression, we obtain:
\begin{align*} Q_\lambda(\psi, \psi) =  \int_1^{+\infty} \left( (1-p) \left( \lambda u_\lambda'(r)^2 + u_{\lambda}(r)^2\right) + \lambda\, \frac{ \mu_{i_1}}{r^2} \, u_\lambda(r)^2 \right)r^{N-1} \, dr \\  \leq \int_1^{+\infty} \left(1-p+ \lambda \mu_{i_1}\right) u_{\lambda}(r)^2\,  r^{N-1} \, dr, \end{align*}
and this last quantity is negative if $\lambda < \e := \frac{p-1}{ \mu_{i_1}}$.
\end{proof}

%\subsection{Some heuristics}
%
%Observe that $u_\lambda$ form a $1-$ parameter family of solutions to problem \eqref{dirichlet2}. Since they are radially symmetric, the Neumann derivative on the boundary is constant. As commented above, the idea of the proof is to find a local bifurcation from this family at a certain $\lambda^*$.
%
%
%In a certain sense, the linearization of \ref{pr_bis} is the problem:
%
%\begin{equation} \label{ovdet}
%\left\{\begin{array} {ll}
%\Delta \psi + f'(u) \psi =0,& \mbox{in }\; \Omega, \\
%                               \frac{\partial \psi}{\partial \nu} - (N-1)H(x) \psi = \mbox{cte} & \mbox{on }\; \partial \Omega,
%\end{array}\right.
%\end{equation}
%where $H(x)$ is the mean curvature of $\partial \Omega$ with respect to $\nu$. In our case, we have:
%
%\begin{equation}
%\left\{\begin{array} {ll}
%-\lambda \Delta \psi + \psi - p u_\lambda^{p-1} \psi =0,& \mbox{in }\; B_1^c, \\
%                              \ \frac{\partial \psi}{\partial \nu} - (N-1) \psi = \mbox{cte} & \mbox{on }\; \partial B_1,
%\end{array}\right.
%\end{equation}
%

In view of Proposition \ref{G-deg}, the operator $L_\lambda$ may be degenerate if we consider non radially symmetric functions. However, as a consequence of next proposition, we conclude that $L_\lambda$ is nondegenerate for large values of $\lambda$.

\begin{Proposition} \label{key1} There exists $M>0$ such that for any $\lambda > M$,  $Q_\lambda(\psi, \psi)>0 $ for any $\psi \in H^1_{0,G}(B_1^c)$ such that $\int_{B_1^c} \psi z_{\lambda} =0$. \end{Proposition}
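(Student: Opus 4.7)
The plan is to argue by contradiction and rescaling. Suppose the conclusion fails, so there exist $\lambda_n \to \infty$ and non-zero $\psi_n \in H^1_{0,G}(B_1^c)$ with $\int \psi_n z_{\lambda_n} = 0$ and $Q_{\lambda_n}(\psi_n, \psi_n) \leq 0$. Since $u_{\lambda_n}$ and $z_{\lambda_n}$ are radial, the splitting $\psi_n = \psi_n^{(0)} + \psi_n^{(\perp)}$ into its radial component and its $G$-symmetric non-radial component diagonalizes $Q_{\lambda_n}$ (the angular integrals of the cross terms vanish) and isolates the orthogonality constraint in the radial sector. Proposition \ref{list}(c) then forces $Q_{\lambda_n}(\psi_n^{(0)}, \psi_n^{(0)}) > 0$ whenever $\psi_n^{(0)} \neq 0$, so $\psi_n^{(\perp)} \not\equiv 0$ and $Q_{\lambda_n}(\psi_n^{(\perp)}, \psi_n^{(\perp)}) \leq 0$. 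The task thus reduces to showing that, for large $\lambda_n$, $Q_{\lambda_n}$ is strictly positive on every non-zero $G$-symmetric function with vanishing radial projection.

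To attack this I rescale by $y = x/\sqrt{\lambda_n}$, writing $w_n(y) = u_{\lambda_n}(\sqrt{\lambda_n}\, y)$ and $\tilde\psi_n(y) = \psi_n^{(\perp)}(\sqrt{\lambda_n}\, y)$. Then $w_n$ solves the $\lambda$-independent equation $-\Delta w + w - w^p = 0$ on $B_{1/\sqrt{\lambda_n}}^c$, and a direct change of variables yields
$$Q_{\lambda_n}(\psi_n^{(\perp)}, \psi_n^{(\perp)}) = \lambda_n^{N/2} \int_{B_{1/\sqrt{\lambda_n}}^c} \left( |\nabla \tilde\psi_n|^2 + \tilde\psi_n^2 - p\, w_n^{p-1}\, \tilde\psi_n^2 \right) dy.$$
Normalizing $\bar\psi_n = \tilde\psi_n/\|\tilde\psi_n\|_{H^1}$ and extending by $0$ to $\RN$ produces a $G$-symmetric sequence with zero radial projection, $\|\bar\psi_n\|_{H^1(\RN)} = 1$, and
$$\tilde Q_n(\bar\psi_n) := \int_{\RN} \left( |\nabla \bar\psi_n|^2 + \bar\psi_n^2 - p\, w_n^{p-1}\, \bar\psi_n^2 \right) dy \leq 0.$$

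The key input is that, as $\lambda_n \to \infty$, $w_n$ converges to the unique positive radial ground state $w_\infty \in H^1(\RN)$ of $-\Delta w + w - w^p = 0$ on all of $\RN$, locally uniformly and with uniform exponential decay at infinity; this follows from uniform $L^\infty$ a priori bounds, Proposition \ref{list}(b) (uniqueness), and standard comparison estimates for the Helmholtz-type tail equation. Extracting $\bar\psi_n \weakto \bar\psi_\infty$ in $H^1(\RN)$ and using that $\psi \mapsto \int w_\infty^{p-1} \psi^2$ is a compact quadratic form on $H^1(\RN)$ (thanks to the exponential decay of $w_\infty$ together with Rellich on compacts), combined with uniform control $w_n^{p-1} \leq C e^{-\delta |y|}$, I pass to the limit to obtain
$$\tilde Q_\infty(\bar\psi_\infty) := \int_{\RN} \left( |\nabla \bar\psi_\infty|^2 + \bar\psi_\infty^2 - p\, w_\infty^{p-1}\, \bar\psi_\infty^2 \right) dy \leq \liminf_n \tilde Q_n(\bar\psi_n) \leq 0,$$
with $\bar\psi_\infty$ still $G$-symmetric and with zero radial projection.

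Two cases close the argument. If $\bar\psi_\infty \neq 0$, Kwong's classical nondegeneracy result for subcritical $p$ says that $-\Delta + 1 - p\, w_\infty^{p-1}$ on $H^1(\RN)$ has Morse index one in the radial mode, kernel spanned by the translations $\partial_i w_\infty$ (spherical-harmonic degree $1$), and is strictly positive on every mode of degree $\geq 2$; hypothesis \ref{G} forces $i_1 \geq 2$, so $\bar\psi_\infty$ lives in the strictly positive sector and $\tilde Q_\infty(\bar\psi_\infty) > 0$, contradicting the line above. If instead $\bar\psi_\infty = 0$, the same compactness gives $\int w_n^{p-1}\bar\psi_n^2 \to 0$, hence $\tilde Q_n(\bar\psi_n) \to \|\bar\psi_n\|_{H^1}^2 = 1 > 0$, again a contradiction. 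I expect the main technical obstacle to lie in the convergence $w_n \to w_\infty$ with uniform decay, which must be extracted from uniqueness and careful elliptic estimates; the full-space nondegeneracy is invoked as a black box, and the spherical-harmonic bookkeeping under $G$-symmetry is routine.
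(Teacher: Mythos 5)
Your proof is correct and follows the same broad strategy as the paper: rescale by $\sqrt{\lambda_n}$ so the exterior Dirichlet problem limits to the full-space ground-state problem, pass to the weak limit in the quadratic form, and use Kwong's nondegeneracy of the ground state in the $G$-symmetric sector to reach a contradiction. Two points of genuine divergence are worth noting. First, your radial/non-radial decomposition, which isolates the constraint $\int \psi\, z_\lambda = 0$ in the radial sector and lets you work purely with $\psi^{(\perp)}$ in the degree-$\geq 2$ modes, is clean but not needed: the paper keeps the full $\psi_n$, shows the weak limit $\psi_0$ is nonzero, concludes from the limiting eigenvalue equation and $G$-symmetry that $\psi_0 = kZ$, and then uses the strong convergence $z_n \to Z$ together with $\int \psi_n z_n = 0$ to force $k=0$, a contradiction; this avoids the decomposition entirely. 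Second, you invoke uniform exponential decay $w_n^{p-1} \le C e^{-\delta |y|}$ to pass to the limit in the potential term and flag this as the main technical obstacle. That estimate, while plausible, is overkill and the paper sidesteps it: since $u_n \to U$ in $H^1(\RN)$ (and hence in $L^{p+1}$ by Sobolev), the convergence $u_n^{p-1}\to U^{p-1}$ in $L^{(p+1)/(p-1)}$ together with the boundedness of $\psi_n$ in $L^{p+1}$ and the known exponential decay of the fixed limit $U$ already yields $\int u_n^{p-1}\psi_n^2 \to \int U^{p-1}\psi_0^2$ (the paper does this with a H\"older argument on $B_\e \setminus B_n$). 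Replacing your decay claim with this H\"older step would close the one gap you correctly identified, without any further work.
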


%The statement that follows is more general, since we will need it in this form for later purposes. Let us define the bilinear operator $Q: H^1(B_1^c) \times H^1(B_1^c) \to \R$,
% \begin{equation} \label{bilinear}  Q(\psi_1, \psi_2)= Q_0(\psi_1, \psi_2) - \lambda\,(N-1)\int_{\partial B_1} \psi_1 \psi_2\,. \end{equation}

%\begin{Proposition} \label{key} There exists $M>0$ such that for any $\lambda > M$,  $Q(\psi, \psi)>0 $ for any $\psi \in E\setminus\{0\}$, where $E$ is the subspace \begin{equation} \label{E} E= \left \{ \psi \in H^1_G(B_1^c), \ \int_{\partial B_1}\psi =0, \ \int_{B_1^c} \psi z_{\lambda} =0\right\}. \end{equation} \end{Proposition}
%This proposition is crucial in our arguments, since provides us with a spectral gap in which we can run our arguments to show bifurcation. The proof is somehow delicate and it is postponed to Section \ref{ultima}. We point out that this is the only point where the assumption $p< \frac{N+2}{N-2}$ is needed.

The proof of this proposition is postponed to Section \ref{ultima}.

\medskip

We define:
\begin{equation} \label{Lambda0}  \Lambda_0 = \sup \left\{ \lambda >0: Q_\lambda(\psi, \psi) \leq 0 \mbox{ for some } \psi \in H_{0,G}^1(B_1^c)\setminus\{0\}, \ \int_{B_1^c} \psi z_\lambda =0\right\}, \end{equation}
Observe that Proposition \ref{key1} implies that the set above is bounded from above, whereas Proposition \ref{G-deg} implies $\Lambda_0 > 0$.
For $\lambda > \Lambda_0$, the Dirichlet problem \eqref{dirichlet2} is nondegenerate, in the sense that the operator $L_\lambda$ has trivial kernel. The following result is rather standard but, since the domain under consideration is unbounded, we prefer to state it and include its proof.
\begin{Lemma}\label{bo} Assume that the operator $L_\lambda$ has trivial kernel. Then:

\begin{enumerate}
\item[a)] The operator $L_\lambda$ is an isomorphism.
\item[b)] Given $v \in H^{1/2}_G(\Sp)$, there exists a unique solution $\psi_v \in H^1_{G}(B_1^c)$ of the problem:
\begin{equation} \label{c00}
\left\{\begin{array} {ll}
- \lambda \Delta \psi_v + \psi_v  - p u_\lambda^{p-1} \psi_v = 0 & \mbox{in }\; B_1^c, \\
               \ \psi_v= v & \mbox{on }\; \pp B_1. \\
\end{array}\right.
\end{equation}
\end{enumerate}
\end{Lemma}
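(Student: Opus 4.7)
The plan is to express $L_\lambda$ as a compact perturbation of an isomorphism, apply the Fredholm alternative to obtain part (a), and then deduce part (b) by a standard lifting of the Dirichlet data.

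First I would write $L_\lambda = A_\lambda - K_\lambda$, with $A_\lambda \phi = -\lambda \Delta \phi + \phi$ and $K_\lambda \phi = p\, u_\lambda^{p-1}\phi$, both regarded as bounded operators from $H^1_{0,G}(B_1^c)$ into $H^{-1}_G(B_1^c)$. The operator $A_\lambda$ is the Riesz isomorphism for the $G$-invariant inner product $(\phi,\eta) \mapsto \int_{B_1^c}\lambda \nabla\phi \cdot \nabla\eta + \phi \eta$, which is equivalent to the standard $H^1$-inner product, so $A_\lambda$ is an isomorphism by Lax--Milgram. The Fredholm alternative will then reduce the surjectivity of $L_\lambda$ to its injectivity, which is precisely the assumed triviality of the kernel.

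The step I expect to be the main obstacle is the compactness of $K_\lambda$, since $B_1^c$ is unbounded and Rellich--Kondrachov fails globally. To handle this I would exploit the decay $u_\lambda(x)\to 0$ as $|x|\to\infty$ granted by Proposition \ref{list}(a). Given $\phi_n \rightharpoonup 0$ in $H^1_{0,G}(B_1^c)$, I would bound $\|K_\lambda\phi_n\|_{H^{-1}_G}$ by splitting the test integral against any $\eta$ with $\|\eta\|\le 1$ at some large radius $R_0$: on $B_{R_0}\cap B_1^c$ the local Rellich embedding gives $\phi_n \to 0$ strongly in $L^2$ while $u_\lambda$ stays bounded, so this piece tends to zero uniformly in $\eta$; on $|x|>R_0$ the factor $u_\lambda^{p-1}$ can be made arbitrarily small by choosing $R_0$ large, so this piece is uniformly small in $n$ and in $\eta$. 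A standard $\varepsilon/R_0$ argument then yields $\|K_\lambda\phi_n\|_{H^{-1}_G}\to 0$. Writing $L_\lambda = A_\lambda(I - A_\lambda^{-1}K_\lambda)$ with $A_\lambda^{-1}K_\lambda$ compact on $H^1_{0,G}(B_1^c)$, part (a) follows from the Fredholm alternative applied to $I-A_\lambda^{-1}K_\lambda$.

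For part (b), I would pick any $G$-symmetric extension $\tilde v \in H^1_G(B_1^c)$ of the trace $v$ (for instance the harmonic lift on a thin spherical annulus cut off to zero outside; $G$-symmetry is preserved by uniqueness of the harmonic extension on a $G$-invariant annulus). Writing the candidate solution as $\psi_v = \tilde v + \phi$ with $\phi \in H^1_{0,G}(B_1^c)$, the boundary value problem \eqref{c00} becomes $L_\lambda\phi = -L_\lambda \tilde v$ in $H^{-1}_G(B_1^c)$, a right-hand side that is well defined via the bilinear form $Q_\lambda$ and the boundedness of $u_\lambda$, and which is uniquely solvable by part (a). Uniqueness of $\psi_v$ then follows from the fact that any two solutions differ by an element of the kernel of $L_\lambda$, which is trivial.
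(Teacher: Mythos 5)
Your proposal follows the same route as the paper: decompose $L_\lambda$ into the Lax--Milgram isomorphism $-\lambda\Delta + \mathrm{Id}$ minus the compact multiplication operator $\phi \mapsto p\,u_\lambda^{p-1}\phi$ (compactness coming from the decay of $u_\lambda$ at infinity), conclude part (a) by the Fredholm alternative, and obtain part (b) by lifting the Dirichlet trace and solving for the correction in $H^1_{0,G}(B_1^c)$. The only difference is that you spell out the $\varepsilon$-splitting argument for compactness that the paper leaves implicit; the substance is identical.
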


\begin{proof}

We observe that the operator
\[
\psi  \to \lambda \Delta \psi + \psi
\]
is an isomorphism from $H^1_{0,G}(B_1^c) \to H^{-1}_{G}(B_1^c)$. Moreover, the operator
\[
\psi  \to p u_\lambda^{p-1}\psi
\]
is a compact operator from $H^1_{0,G}(B_1^c) \to H^{-1}_{G}(B_1^c)$, because $u_\lambda(x)$ tends to 0 when $|x| \to +\infty$ (see Proposition \ref{list}). Our operator $L_\lambda$ is the sum of the two previous operators, and since it has trivial kernel by assumption, we conclude that it is an isomorphism.
%Observe that $L$ is a self-adjoint operator that, by assumption, has $0$ kernel. Therefore its range is dense. In order to show that $L$ is a isomorphism, take  $\psi_n \in H^1_{0,G}(B_1^c)$, $L(\psi_n) = \xi_n \to \xi_0$ in $H^{-1}_{G}(B_1^c)$. If $\psi_n$ was unbounded, we could define $\phi_n = \|\psi_n\|^{-1} \psi_n$ and we can assume that it converges weakly, up to a subsequence, to $\phi_0$. We obtain $L(\phi_0)=0$, which implies by assumption that $\phi_0=0$. But on the other hand, $$ \int_{B_1^c} \lambda |\nabla \phi_n|^2 + \phi_n^2  - p u_\lambda^{p-1} \phi_n^2 = (\xi_n, \phi_n) \to 0.$$ But $ \int_{B_1^c} u_\lambda^{p-1} \phi_n^2 \to 0$ and $\|\phi_n\|=1$, which gives a contradiction. Hence we can assume that $\psi_n \to \psi_0$. Then $L(\psi_0) = \xi_0$, as desired.

In order to prove b), take $\phi \in H_G^1(B_1^c)$ such that $\phi|_{\partial B_1} = v$. Observe that $\xi= - \lambda \Delta \phi + \phi  - p u_\lambda^{p-1} \phi$ is a element of $H^{-1}_{G}(B_1^c)$ in the sense that:
$$(\xi, \psi)= \int_{B_1^c} \lambda \nabla \phi \cdot \nabla \psi + \phi \, \psi   - p u_\lambda^{p-1} \phi \psi.$$
for all $\psi \in H^1_{0,G}(B_1^c)$. By a), we can find $\theta \in H^1_{0,G}(B_1^c)$ with $L_\lambda(\theta)= \xi$. Then $\phi - \theta$ is a solution of \eqref{c00}.
\end{proof}

%\begin{Definition} We shall denote by $\kappa_\lambda \in H_r^1(B_1^c)$ the unique solution of the problem:
%
%\begin{equation} \label{kappa}
%\left\{\begin{array} {ll}
%- \Delta \kappa_\lambda + \kappa_\lambda  - p u_\lambda^{p-1} \kappa_\lambda = 0 & \mbox{in }\; B_1^c, \\
%               \ \kappa_\lambda= 1 & \mbox{on }\; \pp B_1. \\
%\end{array}\right. \end{equation}
%The existence of such solution is guaranteed for any $\lambda \in \R$ by Proposition \eqref{list} c) and Lemma \ref{bo} b).
%
%\end{Definition}

\section{The Dirichlet-to-Neumann operator and its linearization}

The main result of this section is the following:
\begin{Proposition}
\label{dirichle}
Assume that $\lambda > \Lambda_0$, where $\Lambda_0$ is given by \eqref{Lambda0}. Then, for all function $v \in C^{2,\alpha}_{G,m}(\Sp)$ whose norm is sufficiently small, there exists a unique positive solution $u = u (\lambda, v) \in {C}^{2, \alpha} (B_{1+v}^c) \cap H_{0,G}^1(B_{1+v}^c)$ to the problem
\begin{equation}\label{formula}
\left\{
\begin{array}{rcccl}
	- \lambda \Delta u + u-u^p & = & 0 & \textnormal{in} & B_{1+v}^c \\[2mm]
	u & = & 0 & \textnormal{on} & \partial B_{1+v} \,.
\end{array}
\right.
\end{equation}
In addition $u$ depends smoothly on the function $v$, and $u = u_\lambda$ when $v\equiv 0$.
\end{Proposition}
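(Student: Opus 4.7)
The plan is the classical ``straighten the boundary + implicit function theorem'' argument adapted to an exterior domain. First I would fix a smooth cutoff $\chi:[1,\infty)\to[0,1]$ with $\chi\equiv 1$ near $r=1$ and $\chi\equiv 0$ for $r\geq 2$, and for $v\in C^{2,\alpha}_{G,m}(\Sp)$ of sufficiently small norm define a $G$-equivariant diffeomorphism
\[
\Phi_v: B_1^c \longrightarrow B_{1+v}^c,\qquad \Phi_v(r\theta) \;=\; \bigl(r+\chi(r)\,r\,v(\theta)\bigr)\theta,
\]
which is the identity outside a compact annulus and depends smoothly on $v$. Setting $w=u\circ\Phi_v$ transforms the varying-domain problem \eqref{formula} into a semilinear elliptic equation on the fixed domain $B_1^c$ of the form
\[
\mathcal{N}(v,w)\;:=\;-\lambda\,\mathrm{div}\!\bigl(A(v)\nabla w\bigr)+b(v)\cdot\nabla w + w - w^p\;=\;0 \text{ in } B_1^c,\qquad w=0 \text{ on }\partial B_1,
\]
where $A(v)$, $b(v)$ depend smoothly on $v$ and reduce to the Euclidean Laplacian at $v=0$; outside the support of $\chi$ the coefficients are those of $-\lambda\Delta+1$.

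Next I would apply the implicit function theorem in the Banach spaces
\[
X=\bigl\{w\in C^{2,\alpha}_{G}(\overline{B_1^c})\cap H^1_{0,G}(B_1^c)\bigr\},\qquad Y=C^{0,\alpha}_{G}(\overline{B_1^c})\cap H^{-1}_{G}(B_1^c),
\]
equipped with the sum norm; the $H^1$/$H^{-1}$ piece is needed to capture the behavior at infinity and to invoke Lemma~\ref{bo}. The map $\mathcal{N}:U\times V\to Y$ is smooth in a neighborhood of $(0,u_\lambda)$, $\mathcal{N}(0,u_\lambda)=0$, and its partial derivative in the second variable at $(0,u_\lambda)$ is
\[
\partial_w\mathcal{N}(0,u_\lambda)\phi \;=\; -\lambda\Delta\phi + \phi - p\,u_\lambda^{p-1}\phi \;=\; L_\lambda\phi.
\]
Since $\lambda>\Lambda_0$, the definition \eqref{Lambda0} together with the Fredholm alternative ensures that $L_\lambda$ has trivial kernel on $H^1_{0,G}(B_1^c)$, and Lemma~\ref{bo} then gives that $L_\lambda:H^1_{0,G}(B_1^c)\to H^{-1}_{G}(B_1^c)$ is an isomorphism.

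To apply the IFT one must upgrade this to an isomorphism $L_\lambda:X\to Y$. The plan is: for $f\in Y$, produce $\phi\in H^1_{0,G}$ with $L_\lambda\phi=f$; interior and boundary Schauder estimates on $\{1\leq|x|\leq R_0\}$ give $C^{2,\alpha}$ control there, while on $\{|x|\geq R_0\}$ the term $pu_\lambda^{p-1}$ is small (by the decay of $u_\lambda$ from Proposition \ref{list}\,a)) so that $-\lambda\Delta+1$ is the leading operator, whose Green function decays exponentially and yields $C^{2,\alpha}$ estimates up to infinity by standard exterior-domain arguments. The IFT then produces a unique smooth branch $v\mapsto w(v)\in X$ with $w(0)=u_\lambda$ and $\mathcal{N}(v,w(v))=0$, and $u(\lambda,v):=w(v)\circ\Phi_v^{-1}$ solves \eqref{formula}. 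Positivity follows because $w(v)$ is $C^{2,\alpha}$-close to the strictly positive $u_\lambda$, and the decay at infinity is preserved by the same barrier argument using $-\lambda\Delta+1$ as a Yukawa-type operator; $G$-symmetry is inherited from the $G$-equivariance of $\Phi_v$ and the solution space.

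The main technical obstacle is precisely this upgrade from the $H^1$-isomorphism of Lemma~\ref{bo} to a $C^{2,\alpha}$-isomorphism on the unbounded exterior domain. Everything else (smoothness of $\mathcal{N}$ in the Banach spaces, computation of the linearization, preservation of symmetry and of positivity) is routine once the correct functional setting is in place.
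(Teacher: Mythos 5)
Your proposal is correct but takes a genuinely different functional-analytic route from the paper's. The paper applies the implicit function theorem purely in Sobolev spaces: it sets up $N(v,\psi)$ on a neighborhood of $(0,0)$ in $C^{2,\alpha}_G(\Sp)\times H^1_{0,G}(B_1^c)$ with values in $H^{-1}_G(B_1^c)$, using the replacement $w\mapsto[(u_\lambda+\psi)^+]^p$ so the nonlinearity is well-defined without pointwise control, invokes Lemma~\ref{bo} directly (no upgrade needed) for the invertibility of the linearization, and only afterwards obtains $C^{2,\alpha}$ regularity of the particular solution by Schauder theory and positivity by the maximum principle. You instead do the IFT from the start in the intersection spaces $C^{2,\alpha}_G\cap H^1_{0,G}\to C^{0,\alpha}_G\cap H^{-1}_G$, which makes the key technical step exactly what you flagged: upgrading the $H^1$-isomorphism of $L_\lambda$ to one between Hölder--Sobolev spaces on the unbounded exterior domain, via local Schauder estimates near $\partial B_1$ plus exponential decay at infinity using $-\lambda\Delta+1$ as the dominant operator. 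This costs you that extra lemma, but it buys two things the paper's route glosses over: since you work in a $C^{2,\alpha}$ neighborhood of the strictly positive $u_\lambda$, the positive-part trick and the maximum-principle positivity argument become unnecessary (positivity is pointwise-automatic), and, more importantly, you obtain smooth dependence $v\mapsto u(\lambda,v)$ directly in the $C^{2,\alpha}$ topology, which is what is actually used later to conclude that $F$ in \eqref{F} is $C^1$. In that sense your route is more careful about the step that the paper's terse "depends smoothly on $v$" asks you to accept.
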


\begin{proof} Let $v  \in C^{2,\alpha}_{G,m}(\Sp)$. Instead of working on a domain depending on the function $v$, it will be more convenient to work on the fixed domain $B_1^c$, endowed with a new metric depending on the function $v$. This can be achieved by considering the diffeomorphism $Y: B_1^c \to B_{1+v}^c$ given by
\begin{equation} \label{Y}
Y ( y) : = \left(1 + \chi ( y) \, v \left(\frac{y}{|y|}   \right) \, \right)y
\end{equation}
where $\chi$ is a cut-off function such that:
\[
\chi(y)=\left \{ \begin{array}{ll} 0 & |y| \geq 3/2, \\ 1 & |y| \leq 5/4. \end{array} \right.
\]
Hence the coordinates we consider from now on are $y \in B_1^c$ and in these coordinates the new metric $g$ can be written as
\[
g  =   \delta_{ij} +  \sum_{i,j} C^{ij} \, dy_i \, dy_j  \, ,
\]
where the coefficients $C^{ij} \in C^{1, \alpha}_G(B_1^c)$ are functions of $y$ depending on $v$ and the first partial derivatives of $v$. Moreover, $C^{ij} \equiv 0$ when $v =0$ and the maps $v \longmapsto C^{ij} (v)$
are smooth. Up to some multiplicative constant, the problem we want to solve can now be rewritten in the form
\begin{equation}
\label{formula-new}
\left\{
\begin{array}{rcccl}
	- \lambda \Delta_g \hat u + \hat u - \hat u^p & = & 0 & \textnormal{in} & B_{1}^c  \\[2mm]
	\hat u & = & 0 & \textnormal{on} & \partial B_{1}  \,.
\end{array}
\right.
\end{equation}
When $v \equiv 0$, the metric $g$ is nothing but the Euclidean metric and a solution of (\ref{formula-new}) is therefore given by $\hat u = u_\lambda$.   In the general case, the relation between the function $u$ and the function $\hat u$ is simply given by
\[
Y^* u =  \hat u\,.
\]
For $\psi \in H_{0,G}^1(B_1^c)$ we define
\[
N (v , \psi) : = - \lambda \Delta_{g} (u_\lambda +\psi) + (u_\lambda +\psi) - [(u_\lambda +\psi)^+]^p\,.
\]
where $(u_\lambda +\psi)^+$ denotes the positive part of the function $u_\lambda +\psi$.
We have
\[
N (0,0) =0.
\]
The mapping $N$ is a smooth map from a neighborhood of $(0,0)$ in $C_{G}^{2, \alpha} (\Sp) \times H_{0,G}^1(B_1^c)$ into $H^{-1}_G(B_1^c)$.
The partial differential of $N$ with respect to $\psi$, computed at $(0,0)$, is given by
\[
\left.D_\psi N\right|_{(0,0)} (\psi) = -\lambda \Delta \psi + \psi -p u_\lambda^{p-1} \psi \,.
\]
Since $\lambda > \Lambda_0$, $\left.D_\psi N\right|_{(0,0)}: H^1_{0,G}(B_1^c) \to H^{-1}_G(B_1^c)$ is an isomorphism by Lemma \ref{bo}. Therefore, the Implicit Function Theorem implies that, for $v$ in a neighborhood of $0$ in $C_{G}^{2, \alpha}(\Sp)$, there exists of $\psi(\lambda, v) \in H^1_{0,G}(B_1^c)$ such that $N(v, \psi(v,\lambda))=0$. The regularity of $u= u_\lambda + \psi(v,\lambda)$ follows from classical Schauder regularity theory, whereas the fact that $u$ is positive comes from the maximum principle.
\end{proof}

For any $\lambda > \Lambda_0$, after canonical identification of $\partial B_{1+v}$ with $\Sp$, we can take an open set $\mathcal{U} \in (\Lambda_0, +\infty) \times C_{G,m}^{2,\alpha}(\Sp)$ containing $(\Lambda_0, +\infty) \times \{ 0\}$, as the domain of the operator $F: \mathcal{U} \to C_{G,m}^{1,\alpha}(\Sp)$ defined by
\begin{equation}\label{F}
F (\lambda,v) =  \displaystyle  \frac{\partial u(\lambda,v)}{\partial \nu}   - \frac{1}{{\rm Vol}(\partial B_{1+v})} \, \int_{\partial B_{1+v}} \, \frac{\partial u(\lambda,v)}{\partial \nu}
\end{equation}
where $\nu$ denotes the unit normal vector field to $\partial B_{1+v}$ pointing to the interior of $B_{1+v}$ and {$u(\lambda,v)$} is the solution of (\ref{formula}) provided by Proposition \ref{dirichle}.

\medskip Observe that $F(\lambda, v)=0$ if and only if $\frac{\partial u}{\partial \nu}$ is constant at the boundary $\partial B_{1+v}$. Clearly, $F(\lambda, 0)=0$ for all $\lambda \in (\Lambda_0, +\infty)$. Our purpose is to find a bifurcation branch from those solutions, so that we get $F(\lambda,\ v_\lambda)=0$, with $v_\lambda \in C_{G,m}^{2,\alpha}(\Sp)$ small, but different from $0$.

 \medskip

We will compute now the Frechet derivative of the operator $F$. %For that, recall that for any $v \in H^{1/2}_G(\partial B_1)$, there is a solution $\psi_v \in H^1_G(B_1^c)$ of \eqref{c00}. By Schauder Elliptic Estimates, if $v \in C^{2, \alpha}_{G} (\partial B_1)$, $\psi \in C^{2,\alpha}_G (B_1^c)$. We shall restrict ourselves to functions with zero mean.
Before this, we state a useful lemma.

\begin{Lemma} \label{ortog} Let $v\in C^{2,\alpha}_{G,m}(\Sp)$ and $\psi = \psi_v \in C^{2,\alpha}_{G}(B_1^c) \cap H^1_G(B_1^c)$ be a solution of \eqref{c00}. Then:
\[
\int_{B_1^c} \psi z_\lambda=0, \ \ \int_{\partial B_1} \frac{\partial \psi}{\partial \nu}=0.
\]
\end{Lemma}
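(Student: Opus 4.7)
The strategy I would follow is to reduce both identities to a single stronger statement: the spherical mean of $\psi$ vanishes identically in $B_1^c$. Once this is established, both integral identities drop out for free using that $z_\lambda$ and $u_\lambda$ are radial.

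For each $r\geq 1$, set $\bar\psi(r):=\frac{1}{|\Sp|}\int_{\Sp}\psi(r\theta)\,d\sigma(\theta)$. Since $\psi\in H^1_G(B_1^c)$, Jensen's inequality together with Fubini gives $\bar\psi\in H^1_r(B_1^c)$. The trace of $\bar\psi$ at $r=1$ equals the mean of $v$ over $\Sp$, which vanishes because $v\in C^{2,\alpha}_{G,m}(\Sp)$; hence $\bar\psi\in H^1_{0,r}(B_1^c)$. Next, since $u_\lambda$ is radial by Proposition \ref{list}(a) and since $\Delta$ is rotation-invariant, the linear operator $L_\lambda$ commutes with spherical averaging: for every rotation $R\in O(N)$ we have $L_\lambda(\psi\circ R)=(L_\lambda\psi)\circ R$, and integrating over $O(N)$ (or equivalently over $\Sp$) yields $L_\lambda\bar\psi=\overline{L_\lambda\psi}=0$ in $B_1^c$. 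Proposition \ref{list}(c) asserts that $L_\lambda$ has trivial kernel in $H^1_{0,r}(B_1^c)$, so $\bar\psi\equiv 0$.

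The first identity now follows because $z_\lambda$ is radial:
\[
\int_{B_1^c}\psi\,z_\lambda = \int_1^{+\infty} z_\lambda(r)\,r^{N-1}\int_{\Sp}\psi(r\theta)\,d\sigma(\theta)\,dr = |\Sp|\int_1^{+\infty}z_\lambda(r)\,r^{N-1}\,\bar\psi(r)\,dr = 0.
\]
The second identity follows from the commutation of the radial derivative with spherical averaging: since $\psi\in C^{2,\alpha}_G(B_1^c)$ up to the boundary,
\[
\int_{\partial B_1}\frac{\partial\psi}{\partial\nu}\,d\sigma = \pm|\Sp|\,\bar\psi\,'(1),
\]
which vanishes because $\bar\psi\equiv 0$.

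Honestly, there is no real obstacle here; the argument is essentially a soft one that exploits rotational invariance of $L_\lambda$ together with the radial nondegeneracy supplied by Proposition \ref{list}(c). The only points that need a line of verification are (i) that spherical averaging maps $H^1_G(B_1^c)$ into $H^1_r(B_1^c)$ and commutes with traces (standard), and (ii) that the mean-zero hypothesis on $v$ is what turns $\bar\psi$ into an element of $H^1_{0,r}(B_1^c)$ rather than just $H^1_r(B_1^c)$, so that the radial kernel statement of Proposition \ref{list}(c) applies.
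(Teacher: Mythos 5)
Your proof is correct, but it takes a genuinely different route from the paper. The paper argues directly by pairing \eqref{c00} with two auxiliary radial solutions: it multiplies \eqref{z} by $\psi$ and \eqref{c00} by $z_\lambda$, integrates by parts, and uses that $z_\lambda|_{\partial B_1}=0$, $\partial_\nu z_\lambda$ is constant, and $v$ has zero mean to kill all boundary terms, leaving $\tau_0\int\psi z_\lambda=0$ with $\tau_0\neq 0$; for the second identity it introduces the radial solution $\kappa_\lambda$ of $L_\lambda\kappa_\lambda=0$, $\kappa_\lambda|_{\partial B_1}=1$, and runs the same integration-by-parts bookkeeping. You instead prove the stronger structural fact that the spherical mean $\bar\psi$ vanishes identically, using that $L_\lambda$ commutes with spherical averaging (since $u_\lambda$ is radial and $\Delta$ is rotation-invariant), that the mean-zero hypothesis on $v$ forces $\bar\psi\in H^1_{0,r}(B_1^c)$, and that $L_\lambda$ is injective on $H^1_{0,r}(B_1^c)$ by Proposition~\ref{list}(c); both identities then follow because $z_\lambda$ is radial and because $\partial_r$ commutes with spherical averaging for $C^{2,\alpha}$ functions. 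Both routes ultimately rest on the same radial nondegeneracy (the paper needs it to make sense of $\kappa_\lambda$), but your argument is conceptually cleaner and exposes why the two identities are really one statement, while the paper's argument is more elementary in that the first identity needs only $\tau_0\neq 0$ and no commutation lemma. One small remark: it is worth stating explicitly that $\overline{\Delta\psi}=\Delta\bar\psi$ because in polar coordinates the angular part $\tfrac{1}{r^2}\Delta_{\Sp}\psi$ has zero spherical mean; this is the one nonobvious step that deserves a line in a written-out version.
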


\begin{proof} We multiply the equation in \eqref{z} by $\psi$, the equation in \eqref{c00} by $z_\lambda$, and integrate by parts to obtain:
$$ \tau_0 \int_{B_1^c} z_\lambda \psi = \int_{\partial B_1}\left( \frac{\partial \psi}{\partial \nu} z_\lambda - \frac{\partial z}{\partial \nu} \psi\right).$$
We know that $z_\lambda=0$ and $\frac{\partial z_\lambda}{\partial \nu}$ is constant on $\partial B_1$ (recall that $z_\lambda$ is radially symmetric) $\psi = v$ on $\partial B_1$. The first identity follows immediately.

Now, define $\kappa_\lambda \in H_r^1(B_1^c)$ as the unique solution of the problem:
\begin{equation} \label{kappa}
\left\{\begin{array} {ll}
- \Delta \kappa_\lambda + \kappa_\lambda  - p u_\lambda^{p-1} \kappa_\lambda = 0 & \mbox{in }\; B_1^c, \\
              \ \kappa_\lambda= 1 & \mbox{on }\; \pp B_1. \\
\end{array}\right. \end{equation}
The existence of such solution is guaranteed for any $\lambda \in \R$ by Proposition \eqref{list} c) and Lemma \ref{bo} b). We multiply \eqref{kappa} by $\psi$, \eqref{c00} by $\kappa_\lambda$ and integrate by parts to conclude:
$$ 0= \int_{\partial B_1} \left( \frac{\partial \psi}{\partial \nu} \kappa_\lambda - \frac{\partial \kappa_\lambda}{\partial \nu} \psi \right).$$
We know that $k_\lambda=1$ and $\frac{\partial k_\lambda}{\partial \nu}$ is constant on $\partial B_1$ ($k_\lambda$ is also radially symmetric), and that  $\psi = v$ on $\partial B_1$. The second identity follows immediately.
\end{proof}

We define the operator $H_\lambda: C^{2,\alpha}_{G,m} (\Sp) \to C^{1,\alpha}_{G,m} (\Sp)$,
\begin{equation}\label{Hache}
 H_\lambda(v) =  \frac{\partial \psi_v}{ \partial \nu}  -(N-1)\,v.
\end{equation}
Here $\psi_v$ is given by Lemma \ref{bo}, b). Observe that by Schauder Elliptic Estimates, if $v \in C^{2, \alpha}_{G,m} (\partial B_1)$, $\psi_v \in C^{2,\alpha}_G (B_1^c)$, and then the operator is well defined.

\begin{Proposition} \label{frechet}
The map $F$ defined in \ref{F} is a $C^1$ operator in a neighborhood of $(\lambda, 0)$ for all $\lambda > \Lambda_0$, and $\left.D_v F\right|_{(\lambda,0)}= H_\lambda$.
\end{Proposition}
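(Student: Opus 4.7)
I would split the argument into (i) $C^1$ regularity of $F$ near $(\lambda, 0)$, and (ii) identification of the Fr\'echet derivative $\left.D_v F\right|_{(\lambda, 0)}$ with $H_\lambda$ by a careful linearization of the moving boundary value problem at $v = 0$.

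For (i), the Implicit Function Theorem proof of Proposition \ref{dirichle} in fact yields that $v \mapsto \hat u(\lambda, v) := u(\lambda, v) \circ Y$ is smooth from a neighborhood of $0 \in C^{2,\alpha}_{G,m}(\Sp)$ into $u_\lambda + H^1_{0,G}(B_1^c)$. Schauder regularity up to $\pp B_1$ upgrades this to smoothness with values in $u_\lambda + C^{2,\alpha}_G(B_1^c)$. Composing with the continuous boundary trace of the gradient, with the smooth $v$-dependent identification of $\pp B_{1+v}$ with $\Sp$ and of its unit normal, and with the linear mean subtraction, one obtains $F \in C^1$ on a neighborhood of $(\lambda, 0)$.

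For (ii), I fix $v_1 \in C^{2,\alpha}_{G,m}(\Sp)$, set $v_t = t v_1$ and $u_t = u(\lambda, v_t)$. By elliptic regularity, each $u_t$ extends smoothly across $\pp B_{1+v_t}$ to a fixed neighborhood of $\pp B_1$ for all small $t$; I define $\psi^* := \left.\pp_t u_t\right|_{t=0}$ on that neighborhood. Linearizing the equation in $t$ yields $L_\lambda \psi^* = 0$, and differentiating the moving boundary condition $u_t((1+t v_1(\theta)) \theta) = 0$ at $t = 0$ gives the Dirichlet datum $\psi^*|_{\pp B_1} = -u_\lambda'(1)\, v_1$. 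Lemma \ref{bo}(b) then identifies $\psi^*$ with $-u_\lambda'(1)\, \psi_{v_1}$ on $B_1^c$.

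I would next differentiate $\pp_\nu u_t(Y_t(\theta)) = \nabla u_t(x_t) \cdot \nu_t$ at $t = 0$ by the chain rule, tracking three contributions: (a) the variation of the evaluation point $x_t = (1+t v_1(\theta))\theta$, producing $-u_\lambda''(1) v_1(\theta)$ via the radial identity $\mathrm{Hess}(u_\lambda)(\theta) \cdot \theta = u_\lambda''(1)\, \theta$; (b) the variation of the function, giving $-\pp_r \psi^*|_{r=1}$; and (c) the variation $\dot\nu_0 = \nabla_{\Sp} v_1$ of the unit normal, which is tangential to $\Sp$ and therefore pairs to zero against the radial gradient $\nabla u_\lambda(\theta) = u_\lambda'(1)\, \theta$. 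Adding (a) and (b), substituting $\psi^* = -u_\lambda'(1)\, \psi_{v_1}$, invoking the Serrin-type boundary identity $u_\lambda''(1) + (N-1)\, u_\lambda'(1) = 0$ (obtained by evaluating the radial ODE at $r=1$, where $u_\lambda = 0$), and using $\pp_r = -\pp_\nu$ on $\pp B_1$, rewrites the derivative in exactly the form $\pp_\nu \psi_{v_1} - (N-1)v_1 = H_\lambda(v_1)$. Finally, Lemma \ref{ortog} together with the mean-zero hypothesis on $v_1$ ensures that $H_\lambda(v_1)$ already has vanishing mean on $\Sp$, so the mean subtraction in the definition of $F$ contributes nothing at first order.

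The main technical hurdle is step (c) above: verifying that the perturbation of the unit normal contributes only at second order. This is where the radial structure of $u_\lambda$ is essential, because tangential variations of $\nu$ must pair against the purely radial $\nabla u_\lambda$ and therefore vanish; without this cancellation, the formula would not reduce cleanly to $H_\lambda$, and the clean form of the Serrin-type boundary identity would not suffice.
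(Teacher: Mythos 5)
Your proposal is correct in substance and follows essentially the same route as the paper's own argument: pull back to the fixed domain $B_1^c$, linearize the moving Dirichlet condition $u_t\big((1+tv_1(\theta))\theta\big)=0$ to identify the first variation $\psi^*$ with a multiple of $\psi_{v_1}$ via Lemma \ref{bo}(b), expand the normal derivative, and invoke the radial ODE identity $u_\lambda''(1)+(N-1)u_\lambda'(1)=0$ together with Lemma \ref{ortog}. Where the paper expands the pullback metric $\hat g$ and the unit normal $\hat\nu$ to first order in $s$ and reads off the linear term, you differentiate $\nabla u_t(x_t)\cdot\nu_t$ directly by the chain rule and split it into the three contributions (a), (b), (c). Your explicit observation that the first variation $\dot\nu_0$ is tangent to $\Sp$ and therefore annihilates the purely radial $\nabla u_\lambda$ is exactly the cancellation that is buried in the paper's expansion $\hat\nu = \big((1+sw)^{-1}+\mathcal O(s^2)\big)\partial_r + \mathcal O(s)\,\partial_{\theta_j}$, and you present it more transparently. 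One caveat, which you inherit from the paper itself: if you carry the scalars through your own steps (a)$+$(b) — using $\psi^*=-u_\lambda'(1)\psi_{v_1}$, $u_\lambda''(1)=-(N-1)u_\lambda'(1)$, and $\partial_\nu=-\partial_r$ — you arrive at $D_vF\big|_{(\lambda,0)}(v_1)=-u_\lambda'(1)\,H_\lambda(v_1)$ rather than $H_\lambda(v_1)$ itself. The nonzero factor $-u_\lambda'(1)$ does not change the kernel of the linearization and does not disturb the eigenvalue-crossing count that drives the Krasnoselskii argument in Section 5, but the equality asserted in the Proposition holds only up to this multiplicative constant; your writeup, like the paper's, states the identity without that factor.
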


\begin{proof}
The operator $F$ is a $C^1$ operator by Proposition \ref{dirichle} (the function $u$ depends smoothly on $v$). The linear operator obtained by linearizing $F$ with respect to $v$ at $(\lambda,0)$ is then given by the directional derivative
\[
F'(w) = \lim_{s \rightarrow 0} \frac{F(\lambda, s\,w) - F(\lambda,0)}{s} = \lim_{s \rightarrow 0} \frac{F(\lambda, s\,w)}{s} .
\]
Writing $v = s \, w$, we consider the diffeomorphism $Y: B_1^c \to B_{1+v}^c$ given in \eqref{Y}.
We set $\hat g$ the induced metric, so that $\hat u = Y^* u$ is the solution (smoothly depending on the real parameter $s$) of
\[
\left\{
\begin{array}{rcccl}
	-\lambda \Delta_{\hat{g}} \, \hat u + \hat u - \hat u^p & = & 0 & \textnormal{in} & B_1^c \\[3mm]
	\hat u & = & 0 & \textnormal{on} & \partial B_1 \,.
\end{array}
\right.
\]
We remark that $\hat u_\lambda  : = Y^* u_\lambda$ is a solution of
\[
-\lambda \Delta_{\hat{g}} \, \hat u_\lambda + \hat u_\lambda - \hat u_\lambda^p  =0
\]
in a neighborhood of $B_1^c$ (note that $u_\lambda$ is radial and then can be extended in a neighborhood of $\pp B_1$), and
\[
\hat u_\lambda (y) = u_\lambda (   (1+ s \, w ) \, y ) \, ,
\]
on $\partial B_1$. Writing $\hat u = \hat u_\lambda + \hat \psi$ we find that
\begin{equation}
\label{f001}
\left\{
\begin{array}{rcccl}
	-\lambda \Delta_{\hat{g}} \, \hat \psi + (\hat u_\lambda + \hat \psi) - (\hat u_\lambda + \hat \psi)^p - \hat u_\lambda + \hat u_\lambda^p & = & 0 & \textnormal{in} & B_1^c \\[3mm]
	\hat \psi & = & - \hat u_\lambda & \textnormal{on} & \partial B_1
\end{array}
\right.
\end{equation}
Obviously $\hat \psi$ is a smooth functions of $s$. When $s =0$, we have $u = u_\lambda$. Therefore,  $\hat \psi=0$ and $\hat u_\lambda = u_\lambda$ when $s=0$. We set
\[
\dot \psi = \partial_s \hat \psi |_{s=0} \,.
\]
Differentiating (\ref{f001}) with respect to $s$ and evaluating the result at $s=0$, we obtain
\[
\left\{
\begin{array}{rlllll}
	-\lambda \Delta\, \dot \psi + (1+pu_\lambda^{p-1})\, \dot \psi & = & 0 & \textnormal{in} & B_1^c \\[3mm]
	\dot \psi & = & - \partial_r u_\lambda \,w & \textnormal{on} & \partial B_1
\end{array}
\right.
\]
where we have set $r := |y|$. To summarize, we have proved that
\[
\hat u = \hat u_\lambda + s \, \partial_r u_\lambda \psi + {\mathcal O} (s^2)
\]
where $\psi$ is the solution of (\ref{c00}). In particular, in $B_{5/4}^c$, we have
\[
\begin{array}{rlll}
\hat u (y) & = & u_\lambda \left( \big(1+ s \, w(y/|y|)\big) \, y \right) + s \, \partial_r u_\lambda\, \psi (y) + {\mathcal O} (s^2) \\[3mm]
                      & =  &u_\lambda (y) + s\,\partial_r u_\lambda\, \big(rw(y/|y|) + \psi \big) + {\mathcal O} (s^2)
\end{array}
\]
In order to compute the normal derivative of the function $\hat u$ when the normal is computed with respect to the metric $\hat g$, we use polar coordinates $y = r \, \theta$ where $\theta \in \Sp$. Then the metric $\hat g$ can be expanded in $B_{5/4}\setminus B_{1}$ as
\[
\hat g = (1 +s w )^2 \, dr^2  + 2 \, s \, (1+s w) \, r \, d w \, dr +  r^2 \,  (1+ s w)^2  \, \mathring e + s^2 \, r^2 \, d w^2
\]
where $\mathring e$ is the metric on $\Sp$ induced by the Euclidean metric. It follows from this expression that the unit normal vector field to $\partial B_1$ for the metric $\hat g$ is given by
\[
\hat \nu  = \left( ( 1 + s \, w )^{-1} +  {\mathcal O} (s^2)\right)  \,  \partial_r + {\mathcal O} (s) \, \partial_{\theta_j}
\]
where $\partial_{\theta_j}$ are vector fields induced by a parameterization of $\Sp$. Using this, we conclude that
\[
\hat g ( \nabla  \hat u , \hat \nu ) =  \partial_r u_\lambda + s \, \left( w \, \,  \partial_r^2 u_\lambda  + \partial_r \psi \right) + {\mathcal O}(s^2)
\]
on $\partial B_1$. The result then follows at once from the fact that $\partial_r u_\lambda$ and $\partial_r^2 u_\lambda$ are constant on $\partial B_1$, while the terms $w$ and $\partial_r \psi $ have mean $0$ on $\partial B_1$, and
\[
-\lambda (\partial_r^2 u_\lambda + (N-1) \partial_r u_\lambda) = 0
\]
on $\partial B_1$.
This completes the proof of the proposition.
 \end{proof}

\section{Study of the linearized operator}

In view of Proposition \ref{frechet}, a bifurcation might appear only for values of $\lambda$ so that $H_\lambda$ becomes degenerate ($H_\lambda$ was defined in \eqref{Hache}). We shall see that this is indeed the case for some $\lambda > \Lambda_0$. Let us define the first eigenvalue of the operator $H_\lambda$as
\[
\sigma_1 (H_\lambda) = \inf \left\{ \int_{\Sp} v\,H_\lambda(v) \, \, \, :\, \, \, v \in C^{2,\alpha}_{G,m} (\Sp) \,,\,\,\, \int_{\Sp} v^2 =1 \right\} \in [-\infty, +\infty).
\]
The main result of this section is the following:
\begin{Proposition}\label{linearize} There exists $\Lambda_2> \Lambda^* > \Lambda_0$ ($\Lambda_0$ is given in \eqref{Lambda0}) such that:
\begin{enumerate}
\item if $\lambda \geq \Lambda_2$ then $\sigma_1(H_\lambda) >0$;
\item $\sigma_1(H_{\Lambda^*}) =0$;
\item there exists a sequence of real numbers $\lambda_n$, increasing and converging to $\Lambda^*$, such that $\sigma_1(H_{\lambda_n}) < 0$.
\end{enumerate}
\end{Proposition}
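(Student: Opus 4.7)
The plan is to diagonalize $H_\lambda$ via spherical harmonics. Since $u_\lambda$ is radial, $L_\lambda$ commutes with rotations, so for each $G$-invariant spherical harmonic $Y$ of degree $k \in \{i_1, i_2, \dots\}$, the solution $\psi_Y$ of \eqref{c00} factorizes as $\psi_Y(r,\theta) = f_{k,\lambda}(r)\, Y(\theta)$, with $f_{k,\lambda}$ the decaying solution of
\[
-\lambda f'' - \lambda \frac{N-1}{r} f' + \left(\lambda \frac{\mu_k}{r^{2}} + 1 - p u_\lambda^{p-1}\right) f = 0, \qquad f(1) = 1.
\]
Then $H_\lambda Y = \sigma_k(\lambda) Y$ with $\sigma_k(\lambda) = -f_{k,\lambda}'(1) - (N-1)$, so $\sigma_1(H_\lambda) = \inf_{k \in \{i_1, i_2, \dots\}} \sigma_k(\lambda)$. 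A centrifugal lower bound $\sigma_k(\lambda) \geq c\, k$ for $k$ large, uniform on compact subintervals of $(\Lambda_0,+\infty)$, reduces this infimum to finitely many modes and yields continuity of $\lambda \mapsto \sigma_1(H_\lambda)$.

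For item (1), as $\lambda \to +\infty$ the lower-order coefficients of the ODE become negligible (using a uniform $\|u_\lambda\|_\infty$ bound, which holds because $u_\lambda$ approaches the Euclidean ground state of $-\Delta u + u = u^p$ away from the boundary layer at $\partial B_1$); standard ODE continuity then forces $f_{k,\lambda} \to r^{-(k+N-2)}$, whence $\sigma_k(\lambda) \to k-1 \geq i_1 - 1 \geq 1$. Combined with the centrifugal bound, this produces $\Lambda_2 > 0$ with $\sigma_1(H_\lambda) \geq 1/2$ for $\lambda \geq \Lambda_2$. For items (2) and (3), the crucial point is $\sigma_1(H_\lambda) \to -\infty$ as $\lambda \to \Lambda_0^+$. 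By the definition of $\Lambda_0$ together with a standard continuity argument on $Q_\lambda$, there exists $\phi_0 \in H^1_{0,G}(B_1^c)\setminus\{0\}$ with $\int_{B_1^c} \phi_0\, z_{\Lambda_0} = 0$ and $Q_{\Lambda_0}(\phi_0,\phi_0) = 0$; Lagrange multipliers give $L_{\Lambda_0}\phi_0 = 0$. Decomposing into spherical harmonics, Proposition \ref{list}(c) forces the radial ($k = 0$) component of $\phi_0$ to vanish, so $\phi_0 = g(r)\, Y_{k_0}(\theta)$ for some critical mode $k_0 \in \{i_1, i_2, \dots\}$; moreover $g$ is the positive Dirichlet ground state of the radial reduction $L_{\Lambda_0}^{(k_0)}$, so $g(1) = 0$, $g > 0$ on $(1,+\infty)$, $g'(1) > 0$ (Hopf).

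Let $\eta_\lambda$ denote the decaying solution of the $k_0$-mode ODE, normalized smoothly in $\lambda$ at infinity so that $\eta_{\Lambda_0} = g$. Positivity of the Dirichlet spectrum of $L_\lambda^{(k_0)}$ for $\lambda > \Lambda_0$ forces $\eta_\lambda(1) \neq 0$; since $\eta_\lambda(1) > 0$ in the harmonic limit $\lambda = +\infty$, continuity yields $\eta_\lambda(1) > 0$ throughout $(\Lambda_0,+\infty)$ with $\eta_\lambda(1) \to 0^+$ as $\lambda \to \Lambda_0^+$, while $\eta_\lambda'(1) \to g'(1) > 0$. Therefore $f_{k_0,\lambda}'(1) = \eta_\lambda'(1)/\eta_\lambda(1) \to +\infty$, so $\sigma_{k_0}(\lambda) \to -\infty$ and in particular $\sigma_1(H_\lambda) \leq \sigma_{k_0}(\lambda) \to -\infty$. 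Setting $\Lambda^* = \sup\{\lambda \in (\Lambda_0,\Lambda_2) : \sigma_1(H_\lambda) < 0\}$, continuity gives $\sigma_1(H_{\Lambda^*}) \leq 0$, and since $\sigma_1(H_\mu) \geq 0$ for $\mu > \Lambda^*$ we also get $\sigma_1(H_{\Lambda^*}) \geq 0$, proving (2); an increasing sequence $\lambda_n \nearrow \Lambda^*$ with $\sigma_1(H_{\lambda_n}) < 0$ is produced directly from the definition of the supremum, proving (3). The main technical obstacle is the critical-mode blow-up: rigorously identifying $k_0 \geq i_1$, constructing a smooth family $\eta_\lambda$, and tracking the sign of $\eta_\lambda(1)$ as $\lambda \to \Lambda_0^+$; once these are in place, the intermediate value theorem closes the argument.
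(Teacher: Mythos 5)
Your route is genuinely different from the paper's, though it lands in the same place. The paper works entirely with the quadratic forms $Q_\lambda$, $\tilde Q_\lambda$ on the subspace $E$: it proves $\Lambda^* < \infty$ (and hence item (1)) via Proposition \ref{key}, a rescaling/compactness argument in $B_R^c$ as $R \to 0$ (this is the only place the restriction $p < \frac{N+2}{N-2}$ enters), and it proves $\Lambda^* > \Lambda_0$ via Lemma \ref{maj}, a short unique-continuation argument: a degenerate $\tilde Q_{\Lambda_0}$ on $E$ would produce a nontrivial $\psi_0$ with $\psi_0 = \partial_\nu \psi_0 = 0$ on $\partial B_1$. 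Items (2) and (3) then follow from the variational characterization of $\sigma_1(H_\lambda)$ and the definition of $\Lambda^*$ as a supremum. Your proposal instead diagonalizes $H_\lambda$ mode by mode via ODEs, replaces the rescaling argument for (1) by an explicit harmonic limit $f_{k,\lambda} \to r^{-(k+N-2)}$, and replaces the unique-continuation lemma by the stronger and more quantitative statement $\sigma_1(H_\lambda) \to -\infty$ as $\lambda \to \Lambda_0^+$, obtained by tracking the Wronskian-type blow-up $f'_{k_0,\lambda}(1) = \eta'_\lambda(1)/\eta_\lambda(1)$ as the decaying solution's boundary value $\eta_\lambda(1)$ tends to $0$.

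Both routes are viable, but the two halves of your argument trade cleanliness differently. Your blow-up mechanism for (2)--(3) is a nice observation — it is a genuinely stronger fact than what the paper establishes — and the supporting claims (identification of a single critical mode $k_0 \geq i_1$ from $L_{\Lambda_0}\phi_0 = 0$, positivity of the ground state $g$, non-vanishing of $\eta_\lambda(1)$ for $\lambda > \Lambda_0$ from strict positivity of $Q_\lambda$ in the mode-$k_0$ sector, Hopf) all go through once one writes down a continuous-in-$\lambda$ normalization for the one-dimensional family of decaying solutions. The weaker half is your treatment of (1): the limit $\lambda \to +\infty$ of the mode ODE is singular at infinity (the exponential decay length $\sqrt\lambda$ of $f_{k,\lambda}$ diverges while the coefficient $\lambda^{-1}(1 - p u_\lambda^{p-1})$ degenerates), so the claim that $f_{k,\lambda}$ converges, together with its derivative at $r=1$, to the pure power $r^{-(k+N-2)}$ — and that this convergence is uniform enough in $k$ to combine with a "centrifugal bound" — requires a genuine uniform estimate, not just pointwise ODE continuity. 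The paper's Proposition \ref{key} addresses exactly this point by rescaling to $B_R^c$ with $R \to 0$, passing to the entire ground state $U$, and using nondegeneracy of $U$ modulo translations (excluded by $i_1 \geq 2$); if you want your version of (1) to be self-contained, you need an analogue of that compactness step, not merely "standard ODE continuity."
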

In order to prove Proposition \ref{linearize}, let us define the following bilinear forms: $T_\lambda : H^{1/2}_{G}(\Sp) \times H^{1/2}_{G}(\Sp) \to \R$ defined by
\[
T_\lambda(v_1, v_2) = \int _{\Sp} v_1 \frac{\partial \psi_{v_2}}{\partial \nu} \,,
\]
and $\tilde T_\lambda : H^{1/2}_{G}(\Sp) \times H^{1/2}_{G}(\Sp) \to \R$ defined by
\[
\tilde T_\lambda(v_1, v_2) =  \int _{\Sp} v_1 \frac{\partial \psi_{v_2}}{\partial \nu} - \int _{\Sp} (N-1) v_1 v_2\,.
\]
Observe that $T_\lambda$, $\tilde T_\lambda$ are symmetryc. Moreover, it is clear that:
\[
\sigma_1 (H_\lambda) = \inf \left\{ \int_{\Sp} \tilde{T}_\lambda(v,v) \, \, \, :\, \, \, \ v \in C^{2,\alpha}_{G,m}(\Sp),\ \int_{\Sp} v =0 ,\ \int_{\Sp} |v|^2= 1 \right\}.
\]
We also define the bilinear form $\tilde Q_\lambda: H^1_G(B_1^c) \times H^1_G(B_1^c) \to \R$, by
\begin{equation} \label{bilinear}  \tilde Q_\lambda(\psi_1, \psi_2)= Q_\lambda(\psi_1, \psi_2) - \lambda\,(N-1)\int_{\partial B_1} \psi_1 \psi_2 \, , \end{equation}
where $Q_\lambda$ has been defined in \eqref{bilinear0}. It is easy to verify that
\[
T_\lambda(v_1, v_2) = \frac{1}{\lambda}\, Q_\lambda(\psi_{v_1}, \psi_{v_2})\ , \ \ \tilde T_\lambda(v_1, v_2) = \frac{1}{\lambda}\, \tilde Q_\lambda(\psi_{v_1}, \psi_{v_2}).
\]
The following lemma relates $\sigma_1(H_\lambda)$ with the bilinear form $\tilde{Q}$.

\begin{Lemma} For any $ \lambda > \Lambda_0$ we have
\[
\sigma_1(H_\lambda) =  \inf \left \{  \frac{1}{\lambda}\, \tilde Q(\psi, \psi)\,\,\, :\,\,\, \psi \in E \, , \, \int_{\partial B_1} \psi^2= 1\right \}.
\]
where
\begin{equation} \label{E} E= \left \{ \psi \in H^1_G(B_1^c), \ \int_{\partial B_1}\psi =0, \ \int_{B_1^c} \psi z_{\lambda} =0\right\}. \end{equation}
Moreover this infimum is attained.
\end{Lemma}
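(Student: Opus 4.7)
The plan is two-fold: I would first reduce the bulk minimization over $E$ to a boundary minimization on $\Sp$, and then obtain attainment by exploiting that $\tilde T_\lambda$ diagonalizes in spherical harmonics.

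\textbf{Identification of the two infima.} Given $\psi \in E$, I would set $v := \psi|_{\partial B_1} \in H^{1/2}_{G,m}(\Sp)$ and decompose $\psi = \psi_v + \phi$ with $\phi := \psi - \psi_v \in H^1_{0,G}(B_1^c)$. Multiplying the equation \eqref{c00} for $\psi_v$ by $\phi$ and integrating by parts, the vanishing $\phi|_{\partial B_1} = 0$ kills every boundary contribution and yields $Q_\lambda(\psi_v, \phi) = 0$. Since the additional term in $\tilde Q_\lambda(\psi_v,\phi) - Q_\lambda(\psi_v,\phi)$ is a boundary integral of $\psi_v\phi$, which also vanishes, I obtain
\[
\tilde Q_\lambda(\psi,\psi) = \tilde Q_\lambda(\psi_v,\psi_v) + Q_\lambda(\phi,\phi).
\]
By Lemma \ref{ortog}, $\int_{B_1^c}\psi_v\,z_\lambda = 0$, and since $\psi \in E$ also $\int_{B_1^c}\phi\,z_\lambda = 0$; because $\lambda > \Lambda_0$, the very definition \eqref{Lambda0} forces $Q_\lambda(\phi,\phi)\ge 0$. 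Combining this with $\int_{\partial B_1}\psi^2 = \int_\Sp v^2$, and observing that $\psi_v \in E$ for every admissible $v$, the equality
\[
\inf_{\psi \in E,\; \|\psi\|_{L^2(\partial B_1)}=1}\tfrac{1}{\lambda}\tilde Q_\lambda(\psi,\psi)
\;=\;\inf_{v \in H^{1/2}_{G,m}(\Sp),\; \|v\|_{L^2(\Sp)}=1}\tilde T_\lambda(v,v)
\]
follows. Continuity of $v\mapsto\psi_v$ from $H^{1/2}_G(\Sp)$ to $H^1_G(B_1^c)$ (Lemma \ref{bo}) and density of $C^{2,\alpha}_{G,m}$ in $H^{1/2}_{G,m}$ then identify the right-hand side with $\sigma_1(H_\lambda)$.

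\textbf{Attainment.} Because $u_\lambda$ is radial, $\tilde T_\lambda$ commutes with the $SO(N)$-action on $\Sp$ and therefore diagonalizes along the spherical harmonic decomposition of $L^2(\Sp)$. On the eigenspace of $\Delta_\Sp$ with eigenvalue $\mu_n$, one has $\psi_Y(r,\theta) = h_n(r)\,Y(\theta)$, where $h_n$ is the unique $H^1$-solution on $(1,\infty)$ of the radial ODE with $h_n(1) = 1$, and $\tilde T_\lambda$ acts as multiplication by $-h_n'(1) - (N-1)$. An ODE comparison argument---using that for large $n$ the centrifugal term $\mu_n/r^2$ dominates the bounded perturbation $pu_\lambda^{p-1}$---should yield $-h_n'(1)\to +\infty$, so that $\tilde T_\lambda$ has pure point spectrum, bounded below and accumulating only at $+\infty$. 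Restricting to the $G$-invariant, mean-zero subspace preserves discreteness, and $\sigma_1(H_\lambda)$ is attained by a first eigenfunction.

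The hard part will be establishing $-h_n'(1)\to +\infty$. A route avoiding explicit ODE asymptotics is to prove the G{\aa}rding-type coercivity
\[
\tilde T_\lambda(v,v) + C\,\|v\|_{L^2(\Sp)}^2 \;\ge\; c\,\|v\|_{H^{1/2}(\Sp)}^2
\]
by comparing $\psi_v$ with the harmonic lift $\eta_v$ solving $-\lambda\Delta\eta_v + \eta_v = 0$ in $B_1^c$ with $\eta_v = v$ on $\partial B_1$: the defect $\psi_v - \eta_v$ satisfies an equation whose source involves the decaying factor $pu_\lambda^{p-1}$, producing only a compact perturbation of the Dirichlet-to-Neumann quadratic form associated to $-\lambda\Delta+1$, whose leading part is equivalent to $\|v\|_{H^{1/2}(\Sp)}^2$ by standard exterior trace theory. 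Rellich compactness on $\Sp$ then extracts from any minimizing sequence a subsequence converging weakly in $H^{1/2}$ and strongly in $L^2(\Sp)$; weak lower semicontinuity of $\tilde T_\lambda$ shows that the limit realizes the infimum.
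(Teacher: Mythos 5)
Your approach is correct but genuinely different from the paper's, and in one respect cleaner. For the identification of the infima, you use the orthogonal decomposition $\psi = \psi_v + \phi$ together with $Q_\lambda(\psi_v,\phi)=0$ and $Q_\lambda(\phi,\phi)\ge 0$ (the latter being exactly the content of $\lambda>\Lambda_0$, since $\phi\in H^1_{0,G}$ and $\int\phi z_\lambda=0$), giving the Pythagorean identity $\tilde Q_\lambda(\psi,\psi)=\tilde Q_\lambda(\psi_v,\psi_v)+Q_\lambda(\phi,\phi)\ge\tilde Q_\lambda(\psi_v,\psi_v)$. This equates the two infima directly and bypasses the Lagrange-multiplier computation that the paper uses to show the minimizer solves a Steklov problem. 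The paper instead proves attainment of $\gamma_1$ \emph{first}, by a dichotomy argument on a minimizing sequence in $H^1(B_1^c)$ (normalizing by $\|\psi_n\|$ and splitting according to whether the weak limit $\phi_0$ is zero), then derives the Euler--Lagrange Steklov equation to recognize the minimizer as some $\psi_v$, and only at the end identifies the two infima. Your attainment argument is also different in flavor: you work on the boundary, proving Gårding coercivity of $\tilde T_\lambda$ on $H^{1/2}(\Sp)$ by writing it as a compact perturbation of the Dirichlet-to-Neumann form for $-\lambda\Delta+1$, then invoking Rellich compactness of $H^{1/2}(\Sp)\hookrightarrow L^2(\Sp)$; the paper works in the bulk, using weak compactness in $H^1(B_1^c)$ and the compactness of $\psi\mapsto(\psi|_{\partial B_1},\, u_\lambda^{p-1}\psi)$. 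Both routes succeed. The paper's bulk argument is more self-contained and closer to standard variational practice; your boundary route is more structural and reveals the Steklov nature of the problem immediately, at the cost of having to establish the DtN Gårding estimate (the compactness of $v\mapsto\psi_v-\eta_v$ from $H^{1/2}$ into $H^1$, hence of its normal-derivative trace into $H^{-1/2}$) with more care than you give it here. Your alternative ODE route ($-h_n'(1)\to+\infty$) would also work but, as you acknowledge, needs a genuine asymptotic analysis and is not carried out.
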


\begin{proof}

Fix $ \lambda > \Lambda_0$. First we prove that
\begin{equation}\label{gamma1}
\gamma_1:=\inf \left \{Q_\lambda(\psi, \psi)\,\,\, :\,\,\, \psi \in E \, , \, \int_{\partial B_1} \psi^2= 1\right \}.
\end{equation}
is achieved. Take $\psi_n \in E$ such that $Q_\lambda(\psi_n, \psi_n) \to \gamma_1 \in [-\infty, +\infty)$. We show that $ \psi_n $ is bounded by contradiction; if $\| \psi_n \| \to +\infty$, define $\phi_n = \| \psi_n\|^{-1} \psi_n$, and we can assume that up to a subsequence $\phi_n \weakto \phi_0$. Observe that $\int_{\partial B_1} \phi_n^2 \to 0$, which implies that $\phi_0 \in H_{0,G}^1(B_1^c)$. We also point out that $$ \int_{B_1^c} u_\lambda^{p-1} \phi_n^2 \to \int_{B_1^c} u_\lambda^{p-1} \phi_0^2.$$ Now, let us consider two cases:\\
{\bf Case 1:} $\phi_0=0$. In such case,
$$ Q_\lambda(\psi_n, \psi_n)= \| \psi_n\|^2 \int_{B_1^c} \left( \lambda |\nabla \phi_n|^2 + \phi_n^2 - p u_\lambda^{p-1} \phi_n^2 \right)  \to + \infty,$$
which is impossible.\\
{ \bf Case 2:} $\phi_0 \neq 0$. In this case,
\begin{eqnarray*}
\liminf_{n \to + \infty} Q_\lambda(\psi_n, \psi_n) & =  & \liminf_{n \to + \infty} \| \psi_n\|^2  \int_{B_1^c} \left(\lambda |\nabla \phi_n|^2 + \phi_n^2 - p u_\lambda^{p-1} \phi_n^2 \right) \\
&  \geq & \liminf_{n \to + \infty} \| \psi_n\|^2 Q_\lambda(\phi_0, \phi_0),
\end{eqnarray*}
but $Q_\lambda(\phi_0, \phi_0)>0$ since $\lambda > \Lambda_0$. This is again a contradiction.\\
Therefore, $\psi_n$ is bounded, so up to a subsequence we can pass to the weak limit $\psi_n \weakto \psi$. As before,
$$  1=\int_{\partial B_1} \psi_n^2 \to \int_{\partial B_1} \psi^2, \  \int_{B_1^c} u_\lambda^{p-1} \psi_n^2 \to \int_{B_1^c} u_\lambda^{p-1} \psi^2.$$
Then $\psi$ is a minimizer for $\gamma_1$, and in particular $\gamma_1 > -\infty$.
\medskip

Now we observe that under the constraints $\psi \in E$, and $\int_{\partial B_1} \psi^2 =1$ we have
\begin{equation}\label{rel1}
\tilde Q_\lambda(\psi,\psi) = Q_\lambda(\psi,\psi) - \lambda\, (N-1)
\end{equation}
and in particular, also
\[
\inf \left \{\frac{1}{\lambda}\,\tilde Q_\lambda(\psi, \psi)\,\,\, :\,\,\, \psi \in E \, , \, \int_{\partial B_1} \psi^2= 1\right \}.
\]
is achieved.

\medskip

Let $\psi \in E$ be the minimizer such that $Q_\lambda(\psi,\psi)= \gamma_1$. By the Lagrange multiplier rule, there exist $ \alpha_0$, $\alpha_1$,  $\alpha_2 \in \R$ so that for any $\rho \in H^1_{G}(B_1^c)$,
$$ \int_{B_1^c} \left( \nabla \psi \cdot \nabla \rho + \psi \rho - p u_\lambda^{p-1} \psi \rho - \alpha_0 \rho z_\lambda\right)= \int_{\partial B_1} \rho (\alpha_1 \psi + \alpha_2).$$
Taking $\rho = \psi$, we conclude that $\alpha_1 = \gamma_1$. Moreover, taking $\rho = z_\lambda$ and $\rho = \kappa_\lambda$ (recall the definitions of $z_\lambda$ and $k_\lambda$ in \eqref{z} and \eqref{kappa}), we conclude that $\alpha_0=0$ and $\alpha_2=0$, respectively. In other words, $\psi$ is a (weak) solution of the equation:
\begin{equation} \label{steklov}
\left\{\begin{array} {ll}
- \lambda \Delta \psi + \psi  - p u_\lambda^{p-1} \psi = 0 & \mbox{in }\; B_1^c, \\
               \ \frac{\partial \psi}{\partial \nu}= \gamma_1 \psi & \mbox{on }\; \pp B_1. \\
\end{array}\right.
\end{equation}
By the regularity theory, $\psi \in C^{2,\alpha}_G(B_1^c)$.

\medskip Now recall that $T_\lambda(v, v) = \frac{1}{\lambda}Q_\lambda(\psi_{v}, \psi_{v})$. By Lemma \ref{ortog} $\psi_v \in C^{2,\alpha}_G(B_1^c) \cap  E$, and then
$$\gamma_1 \leq  \inf \left \{\lambda\,  T_\lambda(v, v):\ v \in C^{2,\alpha}_G(\Sp),\ \int_{\Sp} v =0 ,\ \int_{\Sp} |v|^2= 1\right \}$$
Moreover, $\gamma_1$ is achieved at a certain $\psi \in C^{2,\alpha}_G(B_1^c)$, which solves \eqref{steklov}. In particular, denoting $v= \psi|_{\partial B_1}$, we conclude that $\lambda\, T_\lambda(v,v)= \gamma_1$. Then we have
\begin{equation} \label{gamma1bis} \gamma_1 =  \inf \left \{ \lambda\, T_\lambda(v, v):\ v \in C^{2,\alpha}_G(\Sp),\ \int_{\Sp} v =0, \ \int_{\Sp} |v|^2= 1\right \}. \end{equation}
Now we observe that under the constraints $\int_{\Sp} v =0$, and $\int_{\Sp} |v|^2= 1$ we have
\[
\tilde T_\lambda(v,v) = T_\lambda(v,v) - (N-1)
\]
and then the result follows at once from \eqref{gamma1}, \eqref{rel1} and \eqref{gamma1bis}.
\end{proof}

The previous lemma leads us to the study of the bilinear form $\tilde Q_\lambda$. The first key result for our purposes is the following:

\begin{Proposition} \label{key} There exists $M>\Lambda_0$ such that for any $\lambda > M$,  $\tilde Q_\lambda(\psi, \psi)>0 $ for any $\psi \in E\setminus\{0\}$, where $E$ is the subspace defined in \eqref{E}.
 \end{Proposition}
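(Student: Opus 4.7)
The plan is a contradiction argument combined with the natural dilation $y = x/\sqrt{\lambda}$, which converts $u_\lambda$ into an approximation of the ground state of $-\Delta v + v = v^p$ on all of $\R^N$. Assume the statement fails: then there are sequences $\lambda_n \to \infty$ and $\psi_n \in E\setminus\{0\}$ with $\tilde Q_{\lambda_n}(\psi_n,\psi_n)\leq 0$ (the space $E$ depends on $\lambda_n$). Set $r_n := 1/\sqrt{\lambda_n}$ and define the rescaled functions
\[
\tilde\psi_n(y):=\psi_n(\sqrt{\lambda_n}\,y), \qquad v_n(y):=u_{\lambda_n}(\sqrt{\lambda_n}\,y), \qquad \tilde z_n(y):=z_{\lambda_n}(\sqrt{\lambda_n}\,y),
\]
all living on $B_{r_n}^c\subset\R^N$. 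Then $v_n$ solves $-\Delta v + v = v^p$ on $B_{r_n}^c$ with $v_n=0$ on $\partial B_{r_n}$ and, as $r_n\to 0$, converges uniformly on compact subsets of $\R^N\setminus\{0\}$ to the unique positive radial ground state $v_\infty$ of $-\Delta v + v = v^p$ on $\R^N$; analogously, a suitably normalized $\tilde z_n$ converges to the unique radial negative-eigenvalue eigenfunction $z_\infty$ of $L_\infty:=-\Delta + 1 - p\,v_\infty^{p-1}$ on $H^1_r(\R^N)$. Dividing the assumption by $\lambda_n^{N/2}$ (using $dx = \lambda_n^{N/2}dy$ and $|\nabla_x|^2 = \lambda_n^{-1}|\nabla_y|^2$) produces
\[
\int_{B_{r_n}^c}\!\bigl(|\nabla\tilde\psi_n|^2 + \tilde\psi_n^2 - p\,v_n^{p-1}\tilde\psi_n^2\bigr)\,dy \;-\; (N-1)\sqrt{\lambda_n}\!\int_{\partial B_{r_n}}\!\tilde\psi_n^2\,dS \;\leq\; 0,
\]
and I would normalize by $\int_{B_{r_n}^c}(|\nabla\tilde\psi_n|^2 + \tilde\psi_n^2)\,dy = 1$.

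The first serious step is to control the boundary integral $\sqrt{\lambda_n}\int_{\partial B_{r_n}}\tilde\psi_n^2\,dS$. Because $\tilde\psi_n$ is $G$-symmetric with $i_1 \geq 2$, the Hardy-type inequality $\int r^{-2}\tilde\psi_n^2\,dy \leq \mu_{i_1}^{-1}\int|\nabla\tilde\psi_n|^2\,dy$ holds (the $k=0,1$ spherical modes are absent), and combining this with a one-dimensional trace estimate on the thin annulus $B_{Cr_n}\setminus B_{r_n}$ for a fixed $C>1$ yields a uniform bound for the boundary term; moreover a more careful choice $C = 1 + o(1)$ shows that this term vanishes as $n\to\infty$ when $N\geq 3$.

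Next, I would extract a subsequence with $\tilde\psi_n\weakto\tilde\psi_\infty$ weakly in $H^1(B_R^c)$ for each fixed $R>0$; since a single point has zero $H^1$-capacity, $\tilde\psi_\infty\in H^1(\R^N)$. The limit is $G$-symmetric, and passing to the limit in $\int\psi_n z_{\lambda_n}\,dx = 0$ via $\tilde z_n\to z_\infty$ gives $\int\tilde\psi_\infty\,z_\infty\,dy = 0$. Rellich compactness on bounded annuli together with the exponential decay of $v_\infty$ ensures $\int v_n^{p-1}\tilde\psi_n^2 \to \int v_\infty^{p-1}\tilde\psi_\infty^2$; the rescaled inequality then forces $\int v_\infty^{p-1}\tilde\psi_\infty^2\geq 1/p>0$, so $\tilde\psi_\infty\not\equiv 0$. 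Taking $\liminf$ in the inequality above yields
\[
\int_{\R^N}\!\bigl(|\nabla\tilde\psi_\infty|^2 + \tilde\psi_\infty^2 - p\,v_\infty^{p-1}\tilde\psi_\infty^2\bigr)\,dy \;\leq\; 0.
\]

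This contradicts the standard spectral picture for $L_\infty$: it has Morse index exactly one on $H^1(\R^N)$, its unique negative eigenfunction is the radial $z_\infty$, and its kernel is spanned by the translations $\{\partial_i v_\infty\}_{i=1}^N$, which all lie in the $k=1$ spherical harmonic mode. Hypothesis \ref{G} removes the $k=1$ mode from $H^1_G(\R^N)$, so the quadratic form is strictly positive definite on $\{z_\infty\}^\perp\cap H^1_G(\R^N)$, contradicting the limit inequality for the nonzero $\tilde\psi_\infty$. The main obstacle is the rigorous analysis of the shrinking inner boundary $\partial B_{r_n}$ and the associated term $(N-1)\sqrt{\lambda_n}\int_{\partial B_{r_n}}\tilde\psi_n^2\,dS$: it vanishes in the limit for $N\geq 3$ by the scaling described above, but in dimension $N=2$ it remains of order one and requires a more delicate argument tracking the trace of $\tilde\psi_\infty$ at the origin (interpreted via $H^1$-capacity) and its interaction with the Steklov-type quadratic form.
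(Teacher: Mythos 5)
Your overall strategy --- rescale to a shrinking inner ball, pass to the limit, and contradict the spectral structure of the linearized operator around the ground state on $\R^N$ --- is exactly the one the paper uses, and steps 1--3 (extraction of the weak limit, removability of the singularity, convergence of the potential term, identification of the limit as an eigenfunction of $L_\infty$ constrained to be orthogonal to $Z$) match. The crucial difference, and where your proof has a genuine gap, is the treatment of the boundary term $\frac{N-1}{r_n}\int_{\partial B_{r_n}}\tilde\psi_n^2$.

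Your argument requires this term to \emph{vanish} in the limit so that you can conclude $\int_{\R^N}\bigl(|\nabla\tilde\psi_\infty|^2+\tilde\psi_\infty^2-pU^{p-1}\tilde\psi_\infty^2\bigr)\le 0$. You assert vanishing for $N\ge 3$ via a trace estimate on a thin annulus with ``$C=1+o(1)$,'' but this is not justified: the normalized test functions $\tilde\psi_n$ need not decouple from the shrinking boundary, and a separated test function $\phi(\theta)\eta(r/r_n)$ with $\eta(1)=1$ produces a boundary term and a gradient term of the \emph{same} order $r_n^{N-2}$, so the ratio does not go to zero after normalization. And for $N=2$ you explicitly leave the term unresolved, so the proof does not cover the dimension that is the paper's main target. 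The paper avoids this entirely: it first sets up a constrained \emph{Steklov minimization} (so the extremal $\psi_n$ solves the Euler--Lagrange boundary-value problem $-\lambda\Delta\psi_n+\psi_n-pu_\lambda^{p-1}\psi_n=\chi_n\psi_n$ in $B_{r_n}^c$ with $\partial_\nu\psi_n=\gamma\psi_n$ on the boundary), and second proves a sharp Hardy--trace inequality (Lemmas \ref{technical}--\ref{appendix}) that gives the \emph{uniform} bound
\[
\frac{N-1}{R}\int_{\partial B_R}\psi^2 \;\le\; \frac{N-1}{N}\int_{B_R^c}|\nabla\psi|^2
\]
for $G$-symmetric $\psi$ with zero boundary mean, valid for every $N\ge 2$. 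The point is that the constant $\frac{N-1}{N}<1$ leaves a strictly positive slack $\frac{1}{N}\int|\nabla\psi_n|^2+\int\psi_n^2$ in the quadratic form; combined with the normalization this gives $p\int u_n^{p-1}\psi_n^2\ge\frac1N$, hence $\psi_0\ne 0$, with \emph{no} need for the boundary term to vanish. The contradiction is then obtained by passing to the limit in the \emph{PDE} (not in the quadratic form): the shrinking boundary disappears because $\psi_0\in H^1(\R^N)$ and the point singularity has zero capacity, $\psi_0$ is a nontrivial $G$-symmetric eigenfunction of $L_\infty$ with eigenvalue $\chi_0\le 0$, hence is a multiple of $Z$, contradicting $\int\psi_0\,Z=0$.

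To repair your argument you would need either (a) a rigorous proof that the rescaled boundary term tends to zero in every dimension (which, as indicated above, is unlikely to be true without further structure on $\psi_n$), or (b) to replace the direct limit in the quadratic form by the paper's route: prove the uniform trace bound via the $G$-symmetry ($i_1\ge 2$ forces $\mu_{i_1}\ge 2N$, which is exactly what makes the constant $1/N$ come out) and pass instead to the limit in the Steklov eigenvalue equation. Without one of these, the proof is incomplete for $N=2$ and not fully justified for $N\ge 3$.
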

The proof of this proposition is somehow delicate and it is postponed to Section \ref{ultima}. We point out that this is the only point where the assumption $p< \frac{N+2}{N-2}$ (if $N>2$) is needed.

\medskip

Let us define now:
\begin{equation} \label{Lambda*} \Lambda^* = \sup \{ \lambda >0: \tilde Q_\lambda(\psi, \psi) < 0 \ \mbox{ for some } \psi \in E\}. \end{equation}
By Proposition \ref{key}, $\Lambda^*<+\infty$. Moreover, since $\tilde Q_\lambda(\psi,\psi) = Q_\lambda(\psi, \psi)$ for all $\psi \in H^1_{0,G}(B_1^c)$, we have also that $\Lambda^* \geq \Lambda_0$. The last main ingredient to prove Proposition \ref{linearize} is the following:

\begin{Lemma}\label{maj} $\Lambda^* > \Lambda_0$ \end{Lemma}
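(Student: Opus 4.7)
The plan is to produce, at the critical value $\lambda = \Lambda_0$ itself, an element of $E$ on which $\tilde Q_{\Lambda_0}$ is strictly negative, and then push the construction to $\lambda$ slightly above $\Lambda_0$ by a continuity argument. The first step is to attain the supremum in the definition of $\Lambda_0$: since $u_\lambda$ decays exponentially at infinity, the functional $\psi \mapsto \int u_\lambda^{p-1}\psi^2$ is continuous under weak $H^1$-convergence, and a direct minimization (together with the approximation $\lambda_n\nearrow \Lambda_0$ provided by the sup-definition) yields a nonzero $\psi_0 \in H^1_{0,G}(B_1^c)$ with $\int \psi_0 z_{\Lambda_0}=0$ and $Q_{\Lambda_0}(\psi_0,\psi_0)=0$.

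Next I would exploit the block-diagonal structure of $Q_{\Lambda_0}$ with respect to the $G$-symmetric spherical-harmonic decomposition $\psi_0=\sum_k w_k(r)Y_k(\theta)$. The constraint $\int \psi z_{\Lambda_0}=0$ touches only the radial block, and Proposition \ref{list}(c) makes that block positive definite on the constraint, so $w_0\equiv 0$. For each non-radial index $k\geq 1$ the block $Q^{(k)}_{\Lambda_0}(w_k,w_k)$ must be $\geq 0$---otherwise the single mode $w_k Y_k$ would belong to the constraint set (since $Y_k$ has zero mean on $\Sp$) and give $Q_{\Lambda_0}<0$, contradicting the sup-definition of $\Lambda_0$---and their vanishing sum forces $Q^{(k)}_{\Lambda_0}(w_k,w_k)=0$ on each active mode. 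In particular $L_{\Lambda_0}(\psi_0)=0$, and on each active mode ODE uniqueness at $r=1$ (combined with $w_k(1)=0$ and $w_k\not\equiv 0$) forces $w_k'(1)\neq 0$. Hence $\eta:=\partial_\nu \psi_0|_{\partial B_1}$ is a nonzero element of $C^{2,\alpha}_{G,m}(\Sp)$, the zero mean coming from the absence of a radial component.

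I then construct $\phi \in E$ with $\phi|_{\partial B_1}=\eta$: take any smooth compactly supported $G$-symmetric extension $\phi_1$ of $\eta$ into $\overline{B_1^c}$, and set $\phi:=\phi_1-\beta\, z_{\Lambda_0}$ with $\beta:=\bigl(\int_{B_1^c}\phi_1 z_{\Lambda_0}\bigr)/\|z_{\Lambda_0}\|_{L^2}^2$, so that $\int \phi z_{\Lambda_0}=0$ while $\phi|_{\partial B_1}=\eta$ is preserved (as $z_{\Lambda_0}$ vanishes on $\partial B_1$). Integration by parts, using $L_{\Lambda_0}(\psi_0)=0$, gives
\[
Q_{\Lambda_0}(\psi_0,\phi) \;=\; \Lambda_0 \int_{\partial B_1} \phi\, \eta \;=\; \Lambda_0\,\|\eta\|_{L^2(\partial B_1)}^2 \;>\;0.
\]
Therefore on $\psi_t:=\psi_0+t\phi \in E$, using $\psi_0|_{\partial B_1}=0$,
\[
\tilde Q_{\Lambda_0}(\psi_t,\psi_t) \;=\; 2t\,\Lambda_0\,\|\eta\|_{L^2(\partial B_1)}^2 + O(t^2),
\]
which is strictly negative for small $t<0$. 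Fixing such $t=t_0$ and setting $\psi_* := \psi_0+t_0\phi$, for $\lambda$ close to $\Lambda_0$ I project $\psi_*$ along $z_\lambda$ to obtain an element of the $\lambda$-dependent space $E$; by continuity of $\lambda\mapsto(u_\lambda,z_\lambda)$ the form $\tilde Q_\lambda$ on this projection remains negative for some $\lambda>\Lambda_0$, yielding $\Lambda^*>\Lambda_0$.

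The main obstacle is the opening compactness step on the unbounded domain $B_1^c$: attaining the infimum at $\lambda=\Lambda_0$ while preserving the orthogonality $\int \psi_0 z_{\Lambda_0}=0$ in the limit, and justifying the continuous dependence of the constraint set on $\lambda$ in the final perturbation. Both rely crucially on the exponential decay of $u_\lambda$ and $z_\lambda$ at infinity, which makes the relevant zeroth-order terms compact in $H^1_G(B_1^c)$.
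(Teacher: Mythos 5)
Your proposal is correct but takes a genuinely different route through the heart of the argument. Both proofs start from the same launch point: a nonzero $\psi_0 \in H^1_{0,G}(B_1^c)$, orthogonal to $z_{\Lambda_0}$, with $Q_{\Lambda_0}(\psi_0,\psi_0)=0$ and $L_{\Lambda_0}\psi_0=0$ (the attainability of $\Lambda_0$ being taken for granted in both), and both aim to show that $\tilde Q_{\Lambda_0}$ fails to be positive semidefinite on $E$. The paper argues by contradiction: if it were positive semidefinite, $\psi_0$ would be a minimizer of $\tilde Q_{\Lambda_0}$ on $E$, and the Lagrange multiplier rule (with $z_{\Lambda_0}$ and $\kappa_{\Lambda_0}$ used to kill the multipliers) forces the Steklov condition $\partial_\nu \psi_0 - (N-1)\psi_0 = 0$ on $\partial B_1$; together with $\psi_0|_{\partial B_1}=0$ this gives vanishing Cauchy data, and unique continuation yields $\psi_0 \equiv 0$, a contradiction. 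You instead avoid the Lagrange rule and the invocation of unique continuation: you decompose $\psi_0$ in $G$-symmetric spherical harmonics, kill the radial mode via the orthogonality to $z_{\Lambda_0}$ and Proposition~\ref{list}(c), use mode-by-mode second-order ODE uniqueness at $r=1$ (an elementary substitute for unique continuation here) to see that $\eta := \partial_\nu\psi_0|_{\partial B_1}$ is a nonzero mean-zero function, and then exhibit the negative direction explicitly by computing the cross term $\tilde Q_{\Lambda_0}(\psi_0,\phi)=\Lambda_0\|\eta\|^2_{L^2(\partial B_1)}>0$ for $\phi \in E$ with $\phi|_{\partial B_1}=\eta$. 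In effect both proofs turn on the same key fact --- $\psi_0$ vanishes on $\partial B_1$ but its normal derivative does not --- but yours exploits it constructively. Two remarks. First, when you say $Q^{(k)}_{\Lambda_0}(w_k)<0$ would ``contradict the sup-definition of $\Lambda_0$'': strictly at $\lambda=\Lambda_0$ the definition permits $Q_{\Lambda_0}\le 0$, so this step, like the final push to $\lambda>\Lambda_0$, needs the continuity of $\lambda\mapsto(u_\lambda,z_\lambda)$; the paper's opening phrase ``it suffices to show that for $\lambda=\Lambda_0$ \dots'' silently invokes the same continuity. Second, the positive semidefiniteness of each $Q^{(k)}_{\Lambda_0}$ (needed to pass from $Q^{(k)}(w_k)=0$ to $w_k$ satisfying the mode ODE) again comes from that continuity applied to $Q_\lambda>0$ for $\lambda>\Lambda_0$. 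These are the same standard details the paper leaves implicit, so the proposal is on an equal footing there.
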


\begin{proof} It suffices to show that for $\lambda= \Lambda_0$, $\tilde Q_\lambda(\psi, \psi)<0$ for some $\psi \in E$. Reasoning by contradiction, assume that $\tilde Q_\lambda$ is semipositive definite in $E$.
By definition of $\Lambda_0$, there exists $\psi_0 \in H_{0,G}^1(B_1^c)$, with $Q_\lambda(\psi_0,\psi_0)=0$, and $\psi_0$ is a solution of \eqref{linear-d}. We have $\psi_0 \in E$ and by our assumptions it is also a minimizer for $\tilde Q_\lambda$ when defined in $E$. By the Lagrange multiplier rule, there exist $ \alpha_0$ and $\alpha_1 \in \R$ so that for any $\rho \in H^1_{G}(B_1^c)$,
$$ \int_{B_1^c} \left( \nabla \psi \cdot \nabla \rho + \psi \rho - p u_\lambda^{p-1} \psi \rho - \alpha_0 \rho z_\lambda\right)= \alpha_1 \int_{\partial B_1} \rho $$
Taking $\rho = z_\lambda$ and $\rho = \kappa_\lambda$ (recall the definitions of $z_\lambda$ and $k_\lambda$ in \eqref{z} and \eqref{kappa}), we conclude that $\alpha_0=0$ and $\alpha_1=0$, respectively. In other words, $\psi$ is a (weak) solution of the equation:
$$
\left\{\begin{array} {ll}
-\lambda \Delta \psi_0 + \psi_0 - p u_\lambda^{p-1} \psi_0 = 0  & \mbox{in }\; B_1^c, \\
               \ \frac{\partial \psi_0}{\partial \eta} - (N-1) \psi_0= 0 & \mbox{on }\; \pp B_1. \\
\end{array}\right.
$$
Since $\psi_0=0$ on $\partial B_1$, we have $\frac{\partial \psi_0}{\partial \nu}=0$ on $\partial B_1$. By unique continuation we should have $\psi_0=0$, but this is a contradiction.
\end{proof}

We are now able to give the proof of the main proposition of this section.

\begin{proof}
{\it (Proposition \ref{linearize}.)} Assertion (1) follows immediately from Proposition \ref{key}. Statements (2) and (3) follow by the definition of $\Lambda^*$ in \eqref{Lambda*} and Lemma \ref{maj}.
\end{proof}

\section{The bifurcation argument}

In order to use a local bifurcation result we need to rewrite our problem in a more convenient way. For that, the following lemma will be essential.

\begin{Lemma} \label{inversion} There exists $\e>0$ such that for any $\lambda \in (\Lambda^*-\e, +\infty)$, the operator
\[
\begin{array}{ccccc}
H_\lambda + Id &: & C^{2,\alpha}_{G,m} (\Sp) & \to &  C^{1,\alpha}_{G,m} (\Sp)\\[2mm]
& & v & \mapsto & H_\lambda(v) + v
\end{array}
\]
is invertible.
\end{Lemma}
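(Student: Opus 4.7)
The plan is to combine Fredholm theory for $H_\lambda + \mathrm{Id}$ with a spectral argument based on the bottom eigenvalue $\sigma_1(H_\lambda)$ studied in Section 4.

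First I would show that $H_\lambda + \mathrm{Id}: C^{2,\alpha}_{G,m}(\Sp) \to C^{1,\alpha}_{G,m}(\Sp)$ is a Fredholm operator of index zero. The main piece of $H_\lambda$ is the Dirichlet-to-Neumann map $v \mapsto \partial_\nu \psi_v$ associated with $-\lambda\Delta+1-pu_\lambda^{p-1}$ on the exterior domain $B_1^c$; using the decay of $u_\lambda^{p-1}$ at infinity one reduces, modulo a smoothing error, to a classical boundary problem in a compact neighbourhood of $\partial B_1$, where this Dirichlet-to-Neumann operator is a first-order elliptic pseudodifferential operator on the closed manifold $\Sp$ (with principal symbol proportional to $|\xi|$). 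The remaining contribution $-(N-2)v$ is of order zero and defines a compact perturbation from $C^{2,\alpha}$ to $C^{1,\alpha}$ by Ascoli-Arzel\`a. Hence $H_\lambda + \mathrm{Id}$ is Fredholm, and its formal self-adjointness in $L^2(\Sp)$ inherited from the symmetry of $\tilde T_\lambda$ (recorded in Section 4 via the identity $\tilde T_\lambda(v_1,v_2)=\lambda^{-1}\tilde Q_\lambda(\psi_{v_1},\psi_{v_2})$ and the symmetry of $\tilde Q_\lambda$) gives Fredholm index zero.

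It then suffices to prove injectivity, i.e.\ that $-1$ does not belong to the $L^2$-spectrum of $H_\lambda$; by the variational characterization this amounts to $\sigma_1(H_\lambda) > -1$, and I would rule out $\sigma_1(H_\lambda) \leq -1$ in two ranges. For $\lambda \geq \Lambda^*$, the very definition of $\Lambda^*$ in \eqref{Lambda*} combined with the variational formula for $\sigma_1(H_\lambda)$ in terms of $\tilde Q_\lambda$ gives $\sigma_1(H_\lambda) \geq 0 > -1$. For $\lambda$ slightly smaller than $\Lambda^*$, I would combine $\sigma_1(H_{\Lambda^*})=0$ from Proposition \ref{linearize} with the continuity of $\sigma_1(H_\lambda)$ in $\lambda$ to conclude $\sigma_1(H_\lambda) > -1$ on some interval $(\Lambda^*-\e,\Lambda^*]$.

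The only nontrivial technical step is the continuity of $\sigma_1(H_\lambda)$ across $\lambda=\Lambda^*$, which is expected to be the main obstacle. I would deduce it from the variational expression $\lambda\,\sigma_1(H_\lambda)=\inf\{\tilde Q_\lambda(\psi,\psi):\psi\in E,\ \int_{\partial B_1}\psi^2=1\}$ and the smooth dependence of $u_\lambda$, $z_\lambda$ on $\lambda$ (hence of the constraint space $E$ and of $\tilde Q_\lambda$), via a compactness argument on minimizing sequences analogous to the one used in Section 4 to produce the minimizer, obtaining both upper and lower semicontinuity of $\sigma_1(H_\lambda)$ at $\Lambda^*$. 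Once this is in place, Fredholm of index zero plus trivial kernel give the desired isomorphism.
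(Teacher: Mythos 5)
Your argument takes a genuinely different route from the paper's. The paper does not invoke Fredholm theory at all; instead it reduces invertibility of $H_\lambda + \sigma\,\mathrm{Id}$ for $\sigma > -\sigma_1(H_\lambda)$ to coercivity of the bilinear forms $Q_{\lambda,\gamma}$ on $E$ and $T_{\lambda,\gamma}$ on $H^{1/2}_G(\Sp)$, applies Lax--Milgram to get invertibility $H^{1/2}_G(\Sp)\to H^{-1/2}_G(\Sp)$, and then upgrades to the H\"older scale via Schauder estimates plus the observation that the mean-zero constraint is preserved. This is more elementary and self-contained than your Fredholm route, which relies on the pseudodifferential calculus for the Dirichlet-to-Neumann map, including the somewhat delicate point that it extends to the exterior, non-compact domain (you flag this with the ``smoothing error'' remark, but in the paper that whole discussion is bypassed). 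Your approach is sound in principle, though, and has the virtue of cleanly isolating the injectivity question as $\sigma_1(H_\lambda) > -1$.

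Both proofs ultimately hinge on knowing $\sigma_1(H_\lambda) > -1$ for $\lambda$ in the relevant range, and you make this explicit, which the paper's write-up does not: the paper proves invertibility of $H_\lambda+\sigma\,\mathrm{Id}$ for $\sigma>-\sigma_1(H_\lambda)$ and leaves tacit why $\sigma=1$ qualifies when $\lambda$ drops slightly below $\Lambda^*$. Your proposed continuity argument for $\sigma_1(H_\lambda)$ via the variational formula and compactness fills a real gap; be aware that both the constraint space $E$ (through $z_\lambda$) and the form $\tilde Q_\lambda$ depend on $\lambda$, so a test function competing at $\lambda$ must be corrected before being used at a nearby $\lambda'$.

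One small slip: the zero-order part of $H_\lambda$ is $-(N-1)v$, not $-(N-2)v$ (see \eqref{Hache}); this does not affect your structure since any multiple of the identity is compact from $C^{2,\alpha}$ to $C^{1,\alpha}$.
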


\begin{proof} It suffices to prove that the operator
\[
v \to  H_\lambda(v) + \sigma\, v
\]
defined in $C^{2,\alpha}_{G,m}(\Sp)$ is invertible for all $\sigma > -\sigma_1(H_\lambda)$. Equivalently, we can prove that the operator
\[
v \to \left.\frac{\partial \psi_v}{ \partial \nu}\right|_{\partial B_1}  +\gamma\,v
 \]
defined in $C^{2,\alpha}_{G,m}(\Sp)$ is invertible for all $\gamma > -\gamma_1$, where $\gamma_1$ is defined in \eqref{gamma1}.
Then, define the bilinear form $Q_{\lambda, \gamma}: E \times E \to \R$ as
\[
Q_{\lambda,\gamma}(\psi_1, \psi_2)= Q_\lambda(\psi_1, \psi_2) + \lambda\, \gamma \int_{\partial B_1}\psi_1\, \psi_2,
\]
and the bilinear form $T_{\lambda,\gamma} : H^{1/2}_G(\Sp) \times  H^{1/2}_G(\Sp) \to \R$ as
\[
T_{\lambda,\gamma} (v_1,v_2) = T_\lambda(v_1, v_2) + \gamma \int_{\Sp} v_1\, v_2.
\]
Since $\gamma > \gamma_1$, those bilinear forms are positive definite. We claim that they are indeed coercive. Let us start with $Q_{\lambda,\gamma}$, and show that:
$$ \alpha:= \inf\{ Q_{\lambda,\gamma}(\psi, \psi):\ \psi \in E, \|\psi\|=1\} >0.$$
Take $\psi_n \in E$, $\|\psi_n\|=1$, $Q_{\lambda,\gamma}(\psi_n,\psi_n) \to \alpha$, and assume that $\psi_n \weakto \psi_0$. If the convergence is strong, then the infimum $\alpha$ is attained, which implies that $\alpha >0$. Otherwise,
\begin{align*} \alpha = \limsup_{n\to +\infty} \int_{B_1^c} \lambda |\nabla \psi_n|^2 + \psi_n^2  - p  u_\lambda^{p-1} \psi_n^2  + \gamma \int_{\partial B_1} \psi_n^2 \\ > \int_{B_1^c} \lambda |\nabla \psi_0|^2 + \psi_0^2 - p  u_\lambda^{p-1} \psi_0^2  + \gamma \int_{\partial B_1} \psi_0^2 \geq 0.\end{align*}
Hence $Q_{\lambda,\gamma}$ is coercive. Now, observe that:
\begin{align*} T_{\lambda,\gamma} (v,v) = \int _{\partial B_1} \left[v \frac{\partial \psi_{v}}{\partial \nu}  +\gamma \,v^2\right]  = \frac{1}{\lambda}Q_{\lambda,\gamma} (\psi_{v}, \psi_{v}) \geq c \|\psi_v\|^2_{H^1(B_1^c)} \geq c' \|v\|^2_{H^{1/2}(\Sp)} , \end{align*}
where we have used the trace estimate in the last inequality. Therefore $T_{\lambda,\gamma}$ is coercive. By the Lax-Milgram Theorem, the operator
\[
v \to \left.\frac{\partial \psi_v}{ \partial \nu}\right|_{\partial B_1}  +\gamma\,v
 \]
 is invertible for all $\gamma > -\gamma_1$ in the spaces $H^{1/2}_G(\Sp) \to H^{-1/2}_G(\Sp)$. By the regularity theory and the fact that the mean property is preserved, it is invertible also in the spaces $C^{2,\alpha}_{G,m}(\Sp) \to C^{1,\alpha}_{G,m}(\Sp)$.
\end{proof}

According to Proposition \ref{linearize}, we can take $\Lambda_1 \in (\Lambda_0, \Lambda^*)$ sufficiently close to $\Lambda^*$ so that $\sigma_1(H_{\Lambda_1})<0$.  We define $G: [\Lambda_1, \Lambda_2] \times \mathcal{V} \to \mathcal{W}$ by
\begin{equation}\label{GG}
G (\lambda,v) =  F(\lambda, v) + v.
\end{equation}
Here $\mathcal{V} \subset C_{G,m}^{2,\alpha}(\Sp)$ and $\mathcal{W} \subset C_{G,m}^{1,\alpha}(\Sp)$ are open neighborhoods of the $0$ function, and $\Lambda_2$ is given by Proposition \ref{linearize}.
By Lemma \ref{inversion}, taking $\Lambda_1$ close enough to $\Lambda^*$ we can assume that $\left.D_vG\right|_{(\lambda, 0)}$ is an isomorphism for all $\lambda \in [\Lambda_1, \Lambda_2]$. By using the Inverse Function Theorem, we can further restrict $\mathcal{V}$ and $\mathcal{W}$ so that $G(\lambda, \cdot)$ is invertible for all $\lambda \in [\Lambda_1, \Lambda_2] $.

\medskip

Define now $R: [\Lambda_1, \Lambda_2] \times \mathcal{W} \to \mathcal{W}$, $R(\lambda, w)= w - \tilde w$, where $\tilde w$ is such that $G(\lambda, \tilde w) = w$. We point out that $R$ has the form of identity plus a compact operator. Clearly, $F(\lambda, v)=0 \Leftrightarrow R(\lambda, v)=0$. Hence Theorem \ref{main} follows if we show local bifurcation of solutions of the equation $R(\lambda, v)=0$.

\medskip
We have
$$\left.D_w R\right|_{(\lambda, 0)}(w)= w - \left.D_w G\right|_{(\lambda, 0)}^{-1}(w).$$
Hence
\begin{equation} \label{RtoH} \left.D_w R\right|_{(\lambda, 0)}(w)= \mu w  \Leftrightarrow H_\lambda(w)= \frac{\mu}{1-\mu} w.\end{equation}
By the proof of Lemma \ref{inversion}, $\mu<1$ if $\lambda \geq \Lambda_1$. Therefore $\left.D_w R\right|_{(\lambda, 0)}(w)$ has the same number of negative eigenvalues as $H_\lambda$.

\medskip

In this framework we can use a local bifurcation result by Krasnoselskii.
\begin{Theorem}\label{Kr} (see for instance \cite{kielhofer}, [II.3.2]). Let $F: [a,b] \times \mathcal{Z} \to X$ be $C^1$ map defined in $\mathcal{Z} \subset X$ a neighborhood of the origin in the Banach Space $X$. Assume that $F$ is given by $F(\lambda, x)= x- K(\lambda, x)$ where $K(\lambda, \cdot)$ is a compact map. Assume moreover, that $\left.D_xF\right|_{(a, 0)}$ and $\left.D_xF\right|_{(b, 0)}$ are isomorphisms of $X$. Denote by $i_{D_xF}(a)$ and $i_{D_xF}(b)$ their indices, that is, the number of negative eigenvalues (counted with algebraic multiplicity). Assume finally that $i_{D_xF}(a)-i_{D_xF}(b)$ is an odd integer. Then every neighborhood of $[a, b] \times \{0\}$ contains solutions of $F(\lambda,x) = 0$, with $\lambda \in (a,b)$, $x \in X$, $x \neq 0$.
\end{Theorem}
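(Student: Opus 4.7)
The plan is to prove the bifurcation theorem by a contradiction argument based on the Leray--Schauder degree. Since $K(\lambda,\cdot)$ is compact, the map $F(\lambda,\cdot) = \mathrm{Id} - K(\lambda,\cdot)$ is a compact perturbation of the identity, so the Leray--Schauder degree $\deg(F(\lambda,\cdot), B_r(0), 0)$ is well defined whenever $F(\lambda,x) \neq 0$ on $\partial B_r(0)$. The strategy is to show that no such homotopy of degrees can be constant between $\lambda=a$ and $\lambda=b$, forcing the existence of nontrivial zeros arbitrarily close to $[a,b]\times\{0\}$.

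First I would compute the degree at the endpoints. Since $D_xF|_{(a,0)}$ is an isomorphism, $0$ is an isolated zero of $F(a,\cdot)$, and a classical linearization formula (see Kielh\"ofer, Chapter II) gives
\[
\deg\bigl(F(a,\cdot), B_r(0), 0\bigr) = (-1)^{i_{D_xF}(a)}
\]
for all sufficiently small $r>0$, where the index equals the sum of the algebraic multiplicities of the negative eigenvalues of $D_xF|_{(a,0)}$ (these are finite in number since $D_xF|_{(a,0)} = \mathrm{Id} - D_xK|_{(a,0)}$ is a compact perturbation of the identity and hence Fredholm of index $0$). The analogous identity holds at $\lambda=b$. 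By the assumption that $i_{D_xF}(a) - i_{D_xF}(b)$ is odd, the two degrees have opposite signs and, in particular, are different.

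Next I would argue by contradiction. Suppose there existed an open neighborhood $\mathcal{U}$ of $[a,b]\times\{0\}$ in $[a,b] \times X$ containing no nontrivial zero of $F$. Then there is some $r>0$ such that $F(\lambda,x)\neq 0$ for every $\lambda \in [a,b]$ and every $x\in \partial B_r(0)$; this uses the compactness of the segment $[a,b]$ together with the continuity of $F$ and the local invertibility at each point. The homotopy invariance of the Leray--Schauder degree under the admissible homotopy $\lambda \mapsto F(\lambda,\cdot)$ then yields
\[
\deg\bigl(F(a,\cdot), B_r(0), 0\bigr) = \deg\bigl(F(b,\cdot), B_r(0), 0\bigr),
\]
contradicting the previous paragraph. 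Hence every neighborhood of $[a,b]\times\{0\}$ must contain a pair $(\lambda,x)$ with $\lambda \in (a,b)$ and $x\neq 0$ solving $F(\lambda,x)=0$, where the openness of the set of $\lambda$ for which the endpoint degrees can be attained excludes $\lambda \in \{a,b\}$ thanks to the isomorphism hypothesis.

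The main obstacle in carrying this out rigorously is verifying the admissibility of the homotopy, that is, establishing a uniform radius $r>0$ such that $F(\lambda,\cdot)$ has no zero on $\partial B_r(0)$ for all $\lambda \in [a,b]$. This step depends essentially on the hypothesis of absence of nontrivial bifurcating solutions in a full neighborhood of the trivial branch: a standard compactness argument, exploiting that $K$ is compact and that $F(\lambda,0)=0$, rules out zeros escaping to the boundary of the ball uniformly in $\lambda$. Once this uniformity is established, the degree computation and homotopy invariance close the argument. Since this theorem is stated as a known result from Kielh\"ofer, in the paper one would simply cite it and immediately apply it to the map $R$ defined in the previous section, using Proposition \ref{linearize} and the eigenvalue relation \eqref{RtoH} to guarantee the odd jump in the index between $\lambda = \Lambda_1$ and $\lambda = \Lambda_2$.
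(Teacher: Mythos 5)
Your proposal is correct and matches the approach the paper points to: the paper does not give its own proof but cites Kielh\"ofer and, in the remark following the statement, explicitly notes that the result follows from the classical Krasnoselskii argument based on the change of Leray--Schauder degree of the trivial solution. Your outline fills in exactly that degree-theoretic argument: degree $(-1)^{i}$ at the endpoints, odd index jump forcing a sign change, and homotopy invariance on a uniform small ball (obtained by the tube lemma from the assumed neighborhood free of nontrivial zeros) yielding the contradiction, with the endpoint isomorphisms ruling out bifurcation at $\lambda = a$ or $\lambda = b$.
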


\begin{Remark}\rm{ The above version of the Krasnoselskii bifurcation result differs slightly from the classical one; usually one imposes the existence of an unique value $\lambda \in (a,b)$ such that the derivative $\left.D_xF\right|_{(\lambda, 0)}$ is degenerate. Under this assumption, one concludes bifurcation at the point $(\lambda,0)$. The version we give above follows immediately from the proof of the classical Krasnoselskii bifurcation result, which is based on a change of the Leray-Schauder degree of the $0$ solution. A drawback of this version is that we cannot identify exactly the bifurcation point.}
\end{Remark}
We now apply Theorem \ref{Kr} to $R(\lambda, w)$. For $\lambda= \Lambda_2$, $i_{D_vR}(\Lambda_2)=0$ by Proposition \ref{linearize}. Therefore we just need to show the validity of the following Lemma.

\begin{Lemma}\label{caracola} $i_{D_vR}(\Lambda_1)$ is odd if $\Lambda_1$ is chosen sufficiently close to $\Lambda^*$.
\end{Lemma}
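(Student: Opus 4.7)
The plan is to diagonalize $H_\lambda$ over the $G$-symmetric spherical harmonics, establish strict ordering of its eigenvalues across harmonic degrees, and then conclude that near $\Lambda^*$ only the first harmonic contributes a sign change, giving $i_{D_vR}(\Lambda_1) = m_1$.

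By separation of variables, if $v$ is a spherical harmonic on $\Sp$ in the $G$-invariant $\mu_{i_k}$-eigenspace (so $\Delta_{\Sp}v = -\mu_{i_k}v$), then the solution $\psi_v$ of \eqref{c00} has the form $\psi_v(r,\theta) = \phi_k(r)\,v(\theta)$, where $\phi_k$ solves the radial ODE
\[
-\lambda\left(\phi_k'' + \frac{N-1}{r}\phi_k' - \frac{\mu_{i_k}}{r^2}\phi_k\right) + (1 - p u_\lambda^{p-1})\phi_k = 0,\qquad \phi_k(1)=1,\quad \phi_k(\infty)=0.
\]
Thus $H_\lambda$ preserves each $\mu_{i_k}$-eigenspace and acts on it as multiplication by $\sigma_k(\lambda) := -\phi_k'(1) - (N-1)$, with multiplicity $m_k$. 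Since $H_\lambda$ is associated to the symmetric bilinear form $\tilde T_\lambda$, it is self-adjoint in $L^2(\Sp)$, so algebraic and geometric multiplicities coincide, and the full spectrum on $C^{2,\alpha}_{G,m}(\Sp)$ is the countable set $\{\sigma_k(\lambda)\}_{k\geq 1}$.

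Next I would show $\sigma_1(\lambda) < \sigma_2(\lambda) < \ldots$ for every $\lambda > \Lambda_0$. A Lagrange multiplier argument (in the spirit of the proof that rewrites $\sigma_1(H_\lambda)$ in terms of $\tilde Q_\lambda$) gives the variational formula
\[
\lambda\,\sigma_k(\lambda) = \inf_{\phi(1)=1}\int_1^\infty r^{N-1}\left[\lambda \phi'^2 + \lambda\frac{\mu_{i_k}}{r^2}\phi^2 + (1 - p u_\lambda^{p-1})\phi^2\right] dr - \lambda(N-1).
\]
The integrand strictly increases with $\mu_{i_k}$ on any nonzero $\phi$; evaluating it on a minimizer for $\sigma_{k+1}$ (necessarily nonzero since $\phi(1)=1$) forces $\sigma_{k+1}(\lambda) > \sigma_k(\lambda)$. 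In particular, $\sigma_1(\lambda)$ matches the first eigenvalue $\sigma_1(H_\lambda)$ of Proposition \ref{linearize}.

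By Proposition \ref{linearize}, $\sigma_1(\Lambda^*) = 0$, hence $\sigma_k(\Lambda^*) > 0$ for all $k \geq 2$. Continuity of $\sigma_k$ in $\lambda$ (standard parameter dependence of ODE solutions, using that $u_\lambda$ depends smoothly on $\lambda$ and decays at infinity) yields $\delta > 0$ with $\sigma_k(\lambda) > 0$ for $k \geq 2$ and $\lambda \in (\Lambda^* - \delta, \Lambda^*]$. Pick $n$ so that $\Lambda_1 := \lambda_n \in (\Lambda^*-\delta, \Lambda^*)$; then $H_{\Lambda_1}$ has exactly $m_1$ negative eigenvalues, all coming from the first harmonic. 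The map $\mu \mapsto \mu/(1-\mu)$ is strictly increasing on $(-\infty, 1)$ and sends $0$ to $0$, so the bijection \eqref{RtoH} preserves signs of eigenvalues; together with diagonalizability of $H_{\Lambda_1}$, this implies that $D_vR|_{(\Lambda_1, 0)}$ also has exactly $m_1$ negative eigenvalues counted with algebraic multiplicity. Hence $i_{D_vR}(\Lambda_1) = m_1$, which is odd by hypothesis \ref{G}. The main obstacle is the strict monotonicity $\sigma_1 < \sigma_2 < \ldots$: without it, several harmonic eigenvalues could cross zero simultaneously at $\Lambda^*$, and the parity of $i_{D_vR}(\Lambda_1)$ would not be controlled by $m_1$ alone.
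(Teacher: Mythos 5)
Your proof is correct and follows essentially the same route as the paper: diagonalize $H_\lambda$ over $G$-symmetric spherical harmonics, establish that the per-mode eigenvalues $\sigma_k(\lambda)$ are strictly increasing in the harmonic index $k$ (the paper does this at the level of the quadratic forms $\tilde Q_{\lambda,k}$), conclude from $\sigma_1(H_{\Lambda^*})=0$ that only the first mode's eigenvalue vanishes at $\Lambda^*$ with the rest strictly positive, and then pass to $\lambda_n$ slightly below $\Lambda^*$ to get exactly $m_1$ negative eigenvalues. Your version is somewhat more explicit than the paper's about the continuity of $\sigma_k$ in $\lambda$, the self-adjointness needed to equate algebraic and geometric multiplicities, and the sign-preserving property of the map $\mu\mapsto\mu/(1-\mu)$ in \eqref{RtoH}, but these are elaborations of the same argument rather than a different one.
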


\begin{proof} In view of \eqref{RtoH}, it suffices to prove that $H_{\Lambda^*}$ has a kernel with odd multiplicity. For any $\psi \in E$, there exist functions $\psi_0, \psi_{k,j}$ defined in $[1,+\infty)$ such that we can write
\[
\psi(r, \theta) = \psi_0(r) + \sum_{k=1}^{+\infty} \sum_{j=1}^{\tilde m_k} \psi_{k,j}(r) \,\zeta_{k,j}(\theta)\,,
\]
where $r = |x|$, $\theta = \frac{x}{|x|}$ and $\zeta_{k,j}$ are the $G$-symmetric spherical harmonics (normalized to 1 in the $L^2$-norm) with eigenvalue $\mu_{i_k}$ of multiplicity $m_k$. Then the quadratic form $\psi \to \tilde Q_\lambda(\psi, \psi)$ defined in $E$ can be written as
\begin{equation}\label{decomp}
\tilde Q_\lambda(\psi, \psi) = \tilde Q_{\lambda,0}(\psi_0) +  \sum_{k=1}^{+\infty} \sum_{j=0}^{\tilde m_k} \tilde Q_{\lambda,k}(\psi_{k,j})
\end{equation}
where for a function $\phi: (1,+\infty) \to \R$ we denote
\begin{eqnarray*}
\tilde Q_{\lambda,k}(\phi) & = & \int_1^{+\infty} \left( \lambda\, \phi'\, \phi' + \phi^2- p\, u_\lambda^{p-1}\, \phi^2 \right)\, r^{N-1}\, dr - (N-1) \, \phi(1)^2\\
& & \qquad +  \mu_{i_k}\int_1^{+\infty} \phi^2 \, r^{N-3}\, dr
\end{eqnarray*}
choosing by convention that $\mu_{i_0}=0$.
Since $\psi \in E$ we have that $\psi_0(1)=0$ and that $\psi_0$ is orthogonal to the function $z_\lambda$ restricted to the radial variable. By Proposition \ref{list} we have $\tilde Q_{\lambda,0}(\psi_0) > 0$. For $\lambda=\Lambda^*$, the bilinear form $\tilde Q_\lambda$ is positive semi-definite in $E\times E$, and then from \eqref{decomp} we have that all the quadratic forms $\tilde Q_{\lambda,k}$ are positive semi-definite. Moreover, it is clear that
\[
\tilde Q_{\lambda, k_1}(\phi)  < \tilde Q_{\lambda,k_2}(\phi)
\]
if $1\leq k_1< k_2$. We know also that there exists a $\psi \in E$ such that $\tilde Q_\lambda(\psi,\psi)=0$. Therefore $\tilde Q_{\lambda,1}$ is positive semi-definite, and $\tilde Q_{\lambda,k}$ are positive definite for $k>1$. This implies that the kernel of $H_{\Lambda^*}$ has dimension equal to $m_1$, which is odd by assumption \ref{G}.
\end{proof}

%\begin{Lemma} The following two conditions are equivalent\pie{definir $H^1_{G,m}(\partial B_1)$ en el sitio adecuato}:
%\begin{enumerate}
%\item $T(v, w) =0$ for all $w \in H^1_{G,m}(\partial B_1)$
%\item $Q(\psi_v,\phi) =0$ for all $\phi \in E$
%\end{enumerate}
%\end{Lemma}

\section{Proof of Propositions \ref{key1} and \ref{key} \label{ultima}}

Observe that the bilinear form $\tilde{Q}_\lambda$ defined in \eqref{bilinear}, when restricted to functions in $H_{0,G}^1(B_1^c)$, is nothing but $Q_\lambda$ (recall \eqref{bilinear0}). Hence Proposition \ref{key1} follows immediately from Proposition \ref{key}.

In order to prove Proposition \ref{key}, we shall consider the problem in the form \eqref{dirichlet}; that is, we aim to prove that $\hat{Q}_R: H^1_G(B_R^c) \times H^1_G(B_R^c) \to \R$,
$$  \hat{Q}_R(\psi_1, \psi_2)= \int_{B_R^c} \nabla \psi_1 \cdot \nabla \psi_2 + \psi_1 \, \psi_2 - p u_{R}^{p-1} \psi_1 \, \psi_2 - \frac{N-1}{R} \int_{\partial B_R} \psi_1 \psi_2  $$
is positive definite if $R>0$ is sufficiently small, when $\psi_1$, $\psi_2$ belong to the space:
$$ E_R= \left \{ \psi \in H^1_G(B_R^c), \ \int_{\partial B_R}\psi =0, \ \int_{B_R^c} \psi z_{R} =0\right\}. $$
Here $u_R$ and $z_R$ stand for
$$ u_R(x)= u_\lambda\left(\frac{x}{R}\right), \ \ z_R(x)=  z_\lambda\left(\frac{x }{R}\right),  \quad \lambda= R^{-2}.$$
The strategy of the proof is to make $R=R_n \to 0$ to and assume that $\hat{Q}_R$ is not positive definite in $E_R$ to reach a contradiction. For that, the behavior of $u_R$, $z_R$ as $R \to 0$ is needed. This result might be known, but we have not been able to find a specific reference.

\begin{Lemma} Let $u_n$ be the positive radial solution of \eqref{dirichlet} for $R=R_n \downarrow 0$, and $z_n=\| z_{R_n}\|^{-1} z_{R_n}$. Let us consider those functions extended to $\RN$ by $0$. Then, $u_n \to U$ and $z_n \to Z$ in $H^1(\RN)$, where $U$ is the radial ground state solution of problem:
\begin{equation} \label{entera}
-\Delta U +U = U ^p,\ \  U >0, \  \mbox{ in }\; \R^N,
\end{equation}
and $Z$ is the normalized positive eigenfunction corresponding to the negative eigenvalue of the linearized problem, that is,
\begin{equation} \label{entera2}
-\Delta Z +Z - p U^{p-1}Z = \tau Z,\ \\  \mbox{ in }\; \R^N,
\end{equation}
with $\tau<0$.
\end{Lemma}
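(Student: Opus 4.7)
The plan is to prove $u_n \to U$ in $H^1(\RN)$ first, by a variational--compactness argument, and then to establish $z_n \to Z$ in $H^1(\RN)$ by an essentially parallel argument for the linearised problem. Both $u_n$ and $z_n$ (after the zero extension) lie in the radial space $H^1_r(\RN)$: $u_n$ by the uniqueness in Proposition \ref{list}, and $z_n$ because it is a positive first eigenfunction. Hence Strauss' radial compactness yields a compact embedding into $L^q(\RN)$ for every $2<q<2^*$. This is the engine of the whole proof, and it is also where the assumption $p<(N+2)/(N-2)$ is used when $N\geq 3$, exactly as in Proposition \ref{key}.

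For $u_n$ the key a priori estimate is a uniform upper bound for $\|u_n\|_{H^1}$. Since $u_n$ is the positive radial ground state on $B_{R_n}^c$ (a consequence of the uniqueness in Proposition \ref{list}, which forces the mountain-pass level and the positive radial solution to coincide), I would compare its Nehari level with the Nehari level of the rescaled cutoff $t_n(U\chi_n)$, where $\chi_n$ is a smooth radial cutoff vanishing on $B_{R_n}$ and equal to $1$ outside $B_{2R_n}$ in dimension $N\geq 3$, and a logarithmic cutoff on the annulus $R_n\leq |x|\leq \sqrt{R_n}$ when $N=2$ (where the $H^1$-capacity of a point vanishes only logarithmically). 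A direct computation shows $U\chi_n\to U$ in $H^1(\RN)$, $t_n\to 1$, and hence $\|u_n\|_{H^1}$ is uniformly bounded; the Sobolev inequality combined with the Nehari identity $\|u_n\|_{H^1}^2=\int u_n^{p+1}$ gives the complementary lower bound $\|u_n\|_{H^1}\geq c>0$. Extracting a subsequence, $u_n\weakto u_*$ weakly in $H^1(\RN)$ and strongly in $L^{p+1}(\RN)$ by Strauss, so $\int u_*^{p+1}\geq c>0$ and $u_*\not\equiv 0$. A standard passage to the limit in the weak formulation of the equation on $\RN\setminus\{0\}$ identifies $u_*\in H^1(\RN)$ as a nonnegative radial solution of \eqref{entera} (the equation extends to all of $\RN$ because a point has zero $H^1$-capacity when $N\geq 2$). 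By Kwong's uniqueness of positive radial finite-energy solutions of \eqref{entera}, $u_*=U$. Strong $H^1$ convergence finally follows from $\|u_n\|_{H^1}^2=\int u_n^{p+1}\to \int U^{p+1}=\|U\|_{H^1}^2$ combined with weak convergence.

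The argument for $z_n$ follows the same pattern. The normalisation $\|z_n\|_{H^1}=1$ yields a weak limit $z_n\weakto Z$ in $H^1(\RN)$ and strong convergence in $L^q$ for $2\leq q<2^*$. To control $\tau_n$, a lower bound comes from its Rayleigh-quotient characterisation together with a uniform $L^\infty$ bound on $u_n$ (produced from the $H^1$-bound by elliptic regularity and the radial structure), whereas an upper bound $\tau_n\leq \tau^*<0$ is obtained by testing the Rayleigh quotient on $B_{R_n}^c$ with $Z\chi_n$ and passing to the limit, using the fact that the linearisation of \eqref{entera} around $U$ has a unique negative simple eigenvalue $\tau<0$ with positive radial eigenfunction $Z$. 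Along a subsequence $\tau_n\to \tau$, and passing to the limit in $-\Delta z_n+z_n-pu_n^{p-1}z_n=\tau_n z_n$ delivers \eqref{entera2}. The nontriviality $Z\not\equiv 0$ is ensured by excluding $\|z_n\|_{L^2}\to 0$: otherwise $\int u_n^{p-1}z_n^2\leq \|u_n\|_\infty^{p-1}\|z_n\|_{L^2}^2\to 0$, and the identity $1-p\int u_n^{p-1}z_n^2=\tau_n\|z_n\|_{L^2}^2$ would force $\tau_n\|z_n\|_{L^2}^2\to 1$, contradicting $\tau_n\leq \tau^*<0$. The main obstacle in the whole scheme is the uniform upper bound on $\|u_n\|_{H^1}$ for the degenerating family of domains $B_{R_n}^c$; it is this point that dictates the careful cutoff construction in dimension $N=2$ described above.
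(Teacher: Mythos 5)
Your argument is essentially correct and reaches the same conclusions, but at two key points the paper takes a shorter route, and it is worth noting why. For the uniform bound on $\|u_n\|_{H^1}$, you compare the Nehari/mountain-pass level of $u_n$ with that of a rescaled cutoff $t_n(U\chi_n)$; this works, but forces you into a delicate cutoff construction, especially the logarithmic cutoff when $N=2$. The paper instead observes that, after extension by zero, $H^1_{0,r}(B_{R_n}^c)\subset H^1_{0,r}(B_{R_{n+1}}^c)$ whenever $R_{n+1}<R_n$, so the Nehari levels $I(u_n)$ are automatically decreasing in $n$ and hence bounded by $I(u_1)$ — no cutoff needed, in any dimension. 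The complementary lower bound $\|u_n\|\geq c>0$ via Sobolev and the Nehari identity is the same in both arguments. For $z_n$, you propose to control $\tau_n$ from below using a uniform $L^\infty$ bound on $u_n$ obtained by elliptic regularity; this is a genuine extra step (one must rule out blow-up of $\sup u_n$ as $R_n\to 0$, which is not entirely automatic, though it can be done via interior De Giorgi–Nash–Moser estimates combined with the strong $L^{p+1}$ convergence of $u_n$). The paper sidesteps this entirely: from Strauss compactness, $\int u_n^{p-1}z_n^2\to\int U^{p-1}z_0^2$, and the identity $1-p\int u_n^{p-1}z_n^2=\tau_n\int z_n^2$ (obtained exactly as in your proposal) already shows both that $z_0\neq 0$ (since $\tau_n<0$ alone forbids $\tau_n\int z_n^2\to 1$; no uniform gap $\tau_n\leq\tau^*<0$ is required) and that $\tau_n$ is bounded (since $\liminf\int z_n^2\geq\int z_0^2>0$). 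Apart from these two economies, your proof follows the same skeleton — Strauss compactness in the radial class, removability of the point singularity, Kwong's uniqueness, and norm convergence upgrading weak to strong $H^1$ convergence — and is sound.
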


\begin{proof}

Let us define the energy functional associated to \eqref{dirichlet}:
$$ I(u)= \frac 1 2 \left (\int_{B_{R_n}^c} |\nabla u|^2 + u^2 \right ) - \frac{1}{p+1} \int_{B_{R_n}^c} |u|^{p+1}.$$
It is well known that, $$I(u_n)= \inf \{ \max \{I(tu): t>0 \}\,, u \in H_{0,r}^1(B_{R_n}^c) \} >0,$$ see for instance \cite{sz}. Since $H_{0,r}^1(B_{R_n}^c) \subset H_{0,r}^1(B_{R_{n+1}}^c)$ (up to extension by $0$), then $I(u_n)$ is decreasing in $n$. In particular, $I(u_n)$ is bounded.
Moreover, multiplying \eqref{dirichlet} by $u_n$ and integrating, we obtain that $DI_{u_n}(u_n)=0$. What follows is standard (see for instance \cite{ambr-rab}); first, observe that:
$$(p+1)I(u_n) =(p+1)I(u_n) - DI_{u_n}(u_n) = \frac{p-1}{2} \|u_n\|^2,$$
frow which $u_n$ is bounded. Passing to a subsequence, we can assume that $u_n \weakto  u_0$ in $H^1$ sense. Multiplying \eqref{dirichlet} by $\phi \in C_0^{\infty}(\R^N \setminus\{0\})$, we conclude that $u_0$ is a weak solution of the problem:
\begin{equation} \label{u0} - \Delta u_0 + u_0 = u_0^p \ \mbox{ in } \R^N \setminus \{0\}. \end{equation}
Since $u_0$ is in the Sobolev class, the singularity is removable.
Multiplying \eqref{dirichlet} by $u_n$ and \eqref{u0} by $u_0$, we have:
$$ \| u_n \|^2 = \int_{\R^N} |u_n|^{p+1}, \ \ \| u_0 \|^2 = \int_{\R^N} |u_0|^{p+1}.$$
By \cite{strauss}, $u_n \to u_0$ strongly in $L^{p+1}$, so that $\|u_n\| \to \|u_0\|$. From this we conclude that $u_n \to u_0$ strongly in $H^1(\R^N)$ and $u_0$ is a nontrivial positive solution of \eqref{entera}. By uniqueness (\cite{kwong}), $u_0=U$.
Regarding $z_n$, it is a radially symmetric function solving:
\begin{equation} \label{z2}
\left\{\begin{array} {ll}
-\Delta z_n + z_n  - p u_n^{p-1} z_n = \tau_n z_n & \mbox{in }\; B_{R_n}^c, \\
         \      z_n= 0 & \mbox{on }\; \pp B_{R_n}, \\
\end{array}\right.
\end{equation}
with $\tau_n <0$. Since $\|z_n\|=1$, $z_n$ converges weakly to some $z_0$. Multiplying the above equation by $z_n$ we get:
\begin{equation}\label{hi} \int_{\RN} |\nabla z_n|^2 + (1-\tau_n)z_n^2 -p u_n^{p-1} z_n^2 =0.\end{equation}
By compact embedding of radial functions (\cite{strauss}), for instance, we conclude that:
\begin{equation}\label{hi2} \int_{\RN} u_n^{p-1} z_n^2 \to \int_{\RN} U^{p-1} z_0^2. \end{equation}
This implies in particular that $z_0$ is not zero. Moreover, $\liminf_{n \to +\infty} \int_{\RN} z_n^2 \geq \int_{\RN} z_0^2$. In particular $\tau_n$ is bounded, and we can assume $\tau_n \to \tau_0 \leq 0$. Then, $z_0$ is a weak solution of
$$-\Delta z_0 + z_0  - p U^{p-1} z_0 = \tau_0 z_0 \ \  \mbox{in }\; \RN \setminus\{0\}.$$
Since $z_0$ belongs to the Sobolev class, the singularity is removable, and it is an entire solution; hence $z_0 = Z$. In particular,
$$\int_{\RN} |\nabla z_0|^2 + (1-\tau_0)z_0^2 -p U^{p-1} z_0^2 =0.$$
This, together with \eqref{hi} and \eqref{hi2}, allows us to conclude that $z_n \to z_0$ strongly, concluding the proof.
\end{proof}

The following lemmas will be of use:

\begin{Lemma} \label{technical} For any $f\in C_0^{\infty}(\R)$, the following inequality holds:
$$  r^{N-2} f(r)^2 \leq \frac{1}{\lambda} \int_r^{+\infty} f'(s)^2 s^{N-1} \, ds + (2-N + \lambda)  \int_r^{+\infty} f(s)^2 s^{N-3} \, ds, $$
where $N\geq 2$, $\lambda>0$ and $r>0$.

\end{Lemma}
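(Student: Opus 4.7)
The plan is to start from the fundamental theorem of calculus applied to $s^{N-2} f(s)^2$. Since $f \in C_0^{\infty}(\R)$, the function $s^{N-2} f(s)^2$ vanishes for $s$ large enough, so that
\[
r^{N-2} f(r)^2 \;=\; -\int_r^{+\infty} \frac{d}{ds}\!\left(s^{N-2} f(s)^2\right)\, ds \;=\; -(N-2)\int_r^{+\infty} s^{N-3} f(s)^2\, ds \;-\; 2\int_r^{+\infty} s^{N-2} f(s)\,f'(s)\, ds.
\]

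The main step is then to control the cross term $-2\int_r^{+\infty} s^{N-2} f(s) f'(s)\, ds$ by a Young-type inequality $2|ab| \leq \lambda^{-1}a^2 + \lambda b^2$. I would split the factor $s^{N-2}$ as $s^{N-2} = s^{(N-1)/2}\cdot s^{(N-3)/2}$ and apply this inequality with $a = s^{(N-1)/2} f'(s)$ and $b = s^{(N-3)/2} f(s)$, obtaining
\[
2\, s^{N-2}\, |f(s)\, f'(s)| \;\leq\; \frac{1}{\lambda}\, s^{N-1} f'(s)^2 \;+\; \lambda\, s^{N-3} f(s)^2.
\]
Integrating this estimate over $(r, +\infty)$ and plugging it into the identity above yields
\[
r^{N-2} f(r)^2 \;\leq\; \frac{1}{\lambda}\int_r^{+\infty} f'(s)^2\, s^{N-1}\, ds \;+\; (2-N+\lambda)\int_r^{+\infty} f(s)^2\, s^{N-3}\, ds,
\]
which is precisely the claim.

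There is no serious obstacle: the only slightly delicate point is choosing the right weights in the Young inequality so that the resulting exponents match the weights $s^{N-1}$ and $s^{N-3}$ in the statement, and so that the coefficient of the $f^2$ integral comes out as $(2-N+\lambda)$. The exponent arithmetic $(N-1)/2 + (N-3)/2 = N-2$ confirms the splitting is consistent. Note that the hypothesis $N \geq 2$ ensures $s^{N-3}$ is locally integrable away from $0$ (only $r>0$ is considered), and no further restriction on $\lambda>0$ or on the sign of $2-N+\lambda$ is needed, since the estimate is an inequality.
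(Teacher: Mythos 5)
Your proposal is correct and follows essentially the same route as the paper: integrate $\tfrac{d}{ds}(s^{N-2}f(s)^2)$ from $r$ to $+\infty$, then bound the cross term $2s^{N-2}f f'$ pointwise via the weighted Young inequality with the split $s^{N-2}=s^{(N-1)/2}\,s^{(N-3)/2}$. (The paper calls this step ``Cauchy--Schwarz,'' but it is the same pointwise Young estimate you use.)
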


\begin{proof} Observe that: $$ r^{N-2} f(r)^2 = - \int_r^{+\infty} (2 s^{N-2} f(s) f'(s) + (N-2) s^{N-3} f(s)^2)\, ds.$$
We now estimate the first term in the right hand side by using Cauchy-Schwartz inequality:
\begin{align*} \int_r^{+\infty} 2 s^{N-2} |f(s)| |f'(s)|\, ds = \int_r^{+\infty} 2 s^{\frac{N-3}{2}} |f(s)|  s^{\frac{N-1}{2}} |f'(s)|\, ds \\
\leq \lambda  \int_r^{+\infty} s^{N-3} f(s)^2\, ds + \frac{1}{\lambda} \int_r^{+\infty} s^{N-1} f'(s)^2\, ds\,. \end{align*}
This lemma follows at once.
\end{proof}

\begin{Lemma} \label{appendix} Let $G$ be a group of symmetries satisfying \ref{G}. Then,
$$ \frac{1}{R} \int_{\partial B_R} \psi(x)^2 \, ds_x \leq \frac{1}{N} \int_{B_R^c} |\nabla \psi(x)|^2 \, dx $$
for any $\psi \in H^1_G(B_R^c)$ with $\int_{\partial B_R} \psi(x)\, ds_x=0$.
\end{Lemma}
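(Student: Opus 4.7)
The plan is to diagonalize the problem by expanding $\psi$ in $G$-symmetric spherical harmonics, reducing it to a one-dimensional weighted inequality that is exactly what Lemma \ref{technical} provides, with the crucial choice of parameter dictated by the eigenvalue gap coming from hypothesis \ref{G}.

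Concretely, let $\{\zeta_k\}_{k \geq 0}$ be an $L^2(\Sp)$-orthonormal basis of $G$-symmetric eigenfunctions of $-\Delta_{\Sp}$, with corresponding eigenvalues $\mu_{i_k}$, and the convention $\zeta_0 \equiv |\Sp|^{-1/2}$ (so $\mu_{i_0} = 0$). Write
\[
\psi(r,\theta) = \sum_{k \geq 0} a_k(r)\, \zeta_k(\theta).
\]
The hypothesis $\int_{\partial B_R} \psi = 0$ is equivalent to $a_0(R) = 0$, and \ref{G} gives $\mu_{i_k} \geq \mu_{i_1} \geq \mu_2 = 2N$ for all $k \geq 1$. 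By Parseval,
\[
\int_{\partial B_R} \psi^2 \, ds = R^{N-1} \sum_{k\geq 1} a_k(R)^2, \qquad \int_{B_R^c} |\nabla \psi|^2\, dx = \sum_{k \geq 0} \int_R^{+\infty} \Bigl[ a_k'(r)^2 + \frac{\mu_{i_k}}{r^2}\, a_k(r)^2\Bigr] r^{N-1}\, dr,
\]
so the claim reduces to showing, for each mode $k \geq 1$,
\[
R^{N-2} a_k(R)^2 \;\leq\; \frac{1}{N} \int_R^{+\infty} a_k'(r)^2 \, r^{N-1}\, dr + \frac{\mu_{i_k}}{N} \int_R^{+\infty} a_k(r)^2 \, r^{N-3}\, dr.
\]

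To obtain this, apply Lemma \ref{technical} pointwise at $r = R$ with $\lambda = N$ (so that $2 - N + \lambda = 2$): after a standard density/truncation argument allowing us to use it on $a_k$ in the natural weighted $H^1$-space on $[R,+\infty)$, we get
\[
R^{N-2} a_k(R)^2 \;\leq\; \frac{1}{N} \int_R^{+\infty} a_k'(r)^2 \, r^{N-1}\, dr + 2 \int_R^{+\infty} a_k(r)^2\, r^{N-3}\, dr.
\]
Since $\mu_{i_k}/N \geq 2$ by \ref{G}, the second term is bounded by the corresponding term in the target inequality, and the desired per-mode bound follows. Summing over $k \geq 1$ and observing that the $k=0$ contribution to the left-hand side vanishes yields the statement of the lemma.

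The main point where the hypothesis \ref{G} is actually used is the matching between the constant $2$ coming out of Lemma \ref{technical} (when we optimize the choice $\lambda = N$, which is the natural one balancing the two terms) and the eigenvalue ratio $\mu_{i_k}/N$, which is exactly $\geq 2$ because the first nontrivial $G$-symmetric spherical harmonic has index $i_1 \geq 2$. If we only knew $i_1 \geq 1$, the eigenvalue ratio would be $\mu_1/N = (N-1)/N < 2$ and the argument would fail; this is the only delicate point, and it is forced to work by the choice of the symmetry class.
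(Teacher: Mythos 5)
Your proof is correct and is essentially the same as the paper's: both expand $\psi$ in $G$-symmetric spherical harmonics, use $\int_{\partial B_R}\psi = 0$ to discard the zeroth mode, and then apply Lemma \ref{technical} mode by mode with the choice $\lambda = N$, exploiting $\mu_{i_k} \geq \mu_{i_1} \geq \mu_2 = 2N$ so that $\mu_{i_k}/N \geq 2$ dominates the constant $2 - N + \lambda = 2$ produced by the technical lemma. Your remark on why $i_1 \geq 2$ (rather than merely $i_1 \geq 1$) is exactly what makes the constants match is a correct reading of the role of hypothesis \ref{G}, which the paper leaves implicit.
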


\begin{proof}

By density arguments, we can assume that $\psi \in C^{\infty}_0(B_R^c)$. We decompose it in Fourier series:
$$\psi(r, \theta)= \sum_{i=0}^\infty \psi_k(r) \phi_k(\theta),$$
where $\phi_k$ are the eigenfunctions of $\Delta_{\Sp}$ under $G-$symmetry. Observe that $\phi_0(\theta)=1$ and $\psi_0(\frac{1}{R})=0$. Therefore it suffices to prove the inequality for the summands $\psi_k(r) \phi_k(\theta)$, $i\geq 1$. Observe that:
$$ \int_{B_R^c} |\nabla ( \psi_k(r) \phi_k(\theta))|^2 \, dr \, d\theta= \int_R^{+\infty} ( \psi_k'(r)^2 r^{N-1} + {\mu}_{i_k} \psi_k(r)^2 r^{N-3})\, dr \ \int_{\partial B_1} \phi_k(\theta)^2 \, d \theta.$$
Moreover,
$$ \frac{1}{R} \int_{\partial B_R} |\psi_k(r) \phi_k(\theta)|^2\, ds_x = R^{N-2} \psi_k(R)^2  \int_{\partial B_1} \phi_k(\theta)^2 \, d \theta.$$
By assumption \ref{G}, ${\mu}_{i_1} \geq \mu_2 = 2 N$. Now it suffices to take $\lambda=N$ in Lemma \ref{technical} to conclude.
\end{proof}

\medskip

We are now able to prove Proposition \ref{key}.

\medskip

\begin{proof} {\it (Proposition \ref{key})}
Take $R=R_n \to 0$, denote $B_n=B_{R_n}$, $u_n= u_{R_n}$ and $z_n= z_{R_n}$, and define:
$$ \chi_n = \inf \left \{ \hat{Q}_R(\psi, \psi):\ \psi \in H^1_G(B_n^c),\ \int_{\partial B_n}\psi =0, \ \int_{B_n^c} \psi z_n =0,\ \int_{B_n^c} |\psi|^2= 1\right \}.$$
Assume, by contradiction, that $\chi_n \leq 0$. The proof will be divided in several steps:

\medskip {\bf Step 1:} We show here that $\chi_n $ is attained.\\
Take $\psi_k$ a minimizing sequence for $\chi_n$. If $\psi_k$ is unbounded in the $H^1$ norm, define $\phi_k = \|\psi_k\|^{-1}\psi_k$. Then,
$$0 \leq \int_{B_n^c} | \nabla \phi_k|^2 + (1-\chi_n) |\phi_k|^2 - p u_n^{p-1} |\phi_k|^2 - \frac{N-1}{R_n} \int_{\partial B_n} |\phi_k|^2 \to 0. $$
But $\phi_k \to 0$ in the $L^2$ norm, so that $\int_{B_n^c} u_n^{p-1} |\phi_k|^2 \to 0$ as $k \to + \infty$. Moreover, $\phi_k \weakto 0$ in $H^1$, so
$\int_{\partial B_n} |\phi_k|^2 \to 0$, yielding a contradiction.
Hence $\psi_k$ is bounded in $H^1$, so that we can assume that $\psi_k \weakto \psi$. Then,
$$  \int_{B_n^c} u_n^{p-1} |\psi_k|^2 \to  \int_{B_n^c} u_n^{p-1} |\psi|^2, \ \  \int_{\partial B_n} |\psi_k|^2 \to \int_{\partial B_n} |\psi|^2.$$
Above we have used the fact that $u_n$ decays to $0$ at infinity and the fact that the embedding $H^1(B_n^c) \hookrightarrow L^2(\partial B_n)$ is compact. We conclude that the convergence is strong and that $\psi$ is a minimizer for $\chi_n$.

\medskip {\bf Step 2:} We pass now to the limit.\\
Let us denote by $\psi_n$  the minimizer of $\chi_n$ renormalized  with respect to the $H^1$ norm. Observe that $\psi_n$ is a solution of the equation:
\begin{equation} \label{linear1}
-\Delta \psi_n + \psi_n - p u_n^{p-1} \psi = \chi_n \psi  \ \  \mbox{in }\; B_n^c. \\
\end{equation}
Moreover,
\begin{equation} \label{papa}\int_{B_n^c} | \nabla \psi_n|^2 + (1-\chi_n) |\psi_n|^2 - p u_n^{p-1} |\psi_n|^2 - \frac{N-1}{R_n} \int_{\partial B_n} |\psi_n|^2 = 0.\end{equation}
By a Cantor diagonal process, $\psi_n \rightharpoonup \psi_0 \in H^1_G(B_r^c)$ for any $r>0$, where $\psi_0 \in H^1(\RN)$ (recall that $H_0^1(\RN \setminus \{0\})=H^1(\RN)$).

\medskip {\bf Step 3:} We show here that
$$  \int_{B_n^c} u_n^{p-1} |\psi_n|^2 \to  \int_{\R^N} U^{p-1} |\psi_0|^2.$$
Indeed, given any $\e >0$, $\psi_n \weakto \psi_0$ in $H^1(B_\e^c)$, which implies that $\psi_n^2 \weakto \psi_0^2$ in $L^{\frac{p+1}{2}}$. Moreover $u_n \to U$ in $H^1(\RN)$, hence:
$$  \int_{B_\e^c} u_n^{p-1} |\psi_n|^2 \to  \int_{B_\e^c} U^{p-1} |\psi_0|^2.$$
Apply now the H\"older inequality:
$$ \int_{B_\e \setminus B_n} u_n^{p-1} |\psi_n|^2 \leq \Big ( \int_{B_\e} u_n^{p+1} \Big )^{\frac{p-1}{p+1}} \Big ( \int_{B_n^c} |\psi_n|^{p+1} \Big )^{\frac{2}{p+1}}.$$
Recall that $u_n \to U$ in $L^{p+1}$ so that
$$ \int_{B_\e} |u_n|^{p+1} \leq \int_{\R^N} |u_n-U|^{p+1} + \int_{B_\e} |U|^{p+1} \leq C \e^N,$$
by choosing sufficiently large $n$. Since $\e$ is arbitrary, we conclude the proof of step 2.

\medskip {\bf Step 4:} We get now the desired contradiction.\\
By Lemma \ref{appendix}, $$\frac{N-1}{R_n} \int_{\partial B_n} |\psi_n|^2 \leq \frac{N-1}{N} \int_{B_n^c} | \nabla \psi_n|^2.$$
This, together with Step 2 and \eqref{papa}, implies that $\psi_0 \neq 0$.
In particular, $$\liminf_{n \to +\infty} \int_{B_n^c}|\psi_n|^2 \geq \int_{B_n^c}|\psi_0|^2>0.$$ Plugging this information in \eqref{papa}, and taking into account Lemma \ref{appendix}, we conclude that $\chi_n$ is bounded. Let us assume that $\chi_n \to \chi_0 \leq 0$.
By \eqref{linear1}, $\psi_0$ is a nontrivial weak solution of the problem:
$$ - \Delta \psi_0 + \psi_0 - p U^{p-1} \psi_0 = \chi_0 \psi_0, \ \mbox{ in } \RN \setminus \{0\}.$$
Since $\psi_0 \in H^1(\RN)$, the singularity is removable and $\psi$ is a weak solution in the whole $\RN$. Since $\psi_0$ is $G$-symmetric, the only possibility is $\psi_0 = k Z$, $k \neq 0$ (see \cite{kwong}).
Observe now that $\int_{B_n^c} \psi_n z_n =0$. By the same arguments as in Step 2, we conclude that
$$ \int_{B_n^c} \psi_n z_n  \to \int_{\RN} \psi_0 Z,$$
which yields the desired contradiction.
\end{proof}

\end{document}